\newcommand{\R}{\mathbb{R}}
\newcommand{\C}{\mathbb{C}}
\newcommand{\N}{\mathbb{N}}
\newcommand{\A}{\mathcal{A}}
\renewcommand{\L}{\mathcal{L}}
\newcommand{\B}{\mathcal{B}}
\renewcommand{\Re}{\operatorname{Re}}
\newcommand{\fra}{\mathfrak{a}}
\newcommand{\frA}{\mathfrak{A}}
\newcommand{\frB}{\mathfrak{B}}
\renewcommand{\mid}{\, \vert \,}
\newcommand{\MR}{\textit{MR}\,}
\theoremstyle{plain}
\newtheorem{theorem}{Theorem}[section]
\newtheorem{proposition}[theorem]{Proposition}
\newtheorem{lemma}[theorem]{Lemma}
\newtheorem{corollary}[theorem]{Corollary}
\theoremstyle{definition}
\newtheorem{definition}[theorem]{Definition}
\newtheorem{problem}[theorem]{Problem}
\newtheorem{remark}[theorem]{Remark}
\begin{document}
\title{ Uniform approximation of non-autonomous evolution equations \footnote {}}
\author{
O. EL-Mennaoui, Agadir,  H.Laasri, Hagen
}


\maketitle

\begin{abstract}\label{abstract}
We study  $L^2$-maximal regularity in a Hilbert space $H$ for non-autonomous linear evolution equations of the form
\begin{equation}\label{Abstract equation}
  \dot u(t)+\A(t)u(t)=f(t)\ \ t\in[0,T],\ \
 u(0)=u_0.
  \end{equation}
where $\A(t),\ t\in [0,T]$  arise from  a non-autonomous sesquilinear forms $\fra(t,\cdot,\cdot)$  with constant domain $V\subset H.$ $L^2-$maximal regularity result is proved recently in \cite{Ar-Mo15} when  $\fra$ is H\"older continuous of type $\alpha>1/2.$ In this paper we recover the same results by the approximation method developed in \cite{ELLA13}, \cite{LASA14} and \cite{ELLA15}. The method uses an appropriate approximation $\A_\Lambda(\cdot)$ of $\A(\cdot)$ for which
\begin{equation}\label{Abstract equation approx}
  \dot u_{\Lambda}(t)+\A_{\Lambda}(t)u_{\Lambda}(t)=f(t)\ \ t\in[0,T],\ \
 u_{\Lambda}(0)=u_0
  \end{equation}
  has $L^2$-maximal regularity where $\Lambda$ is a subdivision of $[0,T].$
Furthermore, we show that there exists a sequence $(\Lambda_n)_{n\in\N}$ of subdivisions  of $[0,T]$ depending on the modulus of continuity  such that the sequenece of the solutions
$u_{\Lambda_n}$ of (\ref{Abstract equation approx}) converges in $L^2(0,T,V)\cap H^1(0,T,H)\cap C(0,T,V)$ uniformly on the initial datas $(u_0,f)$ to the solution $u$ of (\ref{Abstract equation}) as $n
\rightarrow 0.$ Moreover, we show that such an uniform converges with respect to initial datas holds for arbitrary subdivision of $[0,T]$ under a little more assumptions on the modulus of continuity.

\end{abstract}

\bigskip
\noindent
\textbf{keywords}: Sesquilinear forms, non-autonomous evolution equations, maximal regularity, approximation. \medskip

\noindent
\textbf{MSC:} 35K45, 35K90, 47D06.

\section*{Introduction\label{s1}}
Let $V,H$ be two separable Hilbert space such that $V$ is continuously and densely embedded into $H.$  Consider a non-autonomous form
\[
	\fra: [0,T]\times V\times V \mapsto \mathbb C
\]
such that   $\fra(t, .,.)$ is sesquilinear for all $t\in[0,T]$, $\fra(.,u,v)$ is measurable for all $u,v\in V,$
\begin{equation*}\label{eq:continuity-nonaut}
	| \fra(t,u,v) | \le M \|u\|_V \|v\|_V \quad t\in[0,T], u,v\in V,\ (\text{boundedness})
\end{equation*}
and
\begin{equation*}\label{eq:Ellipticity-nonaut}
	\Re~ \fra (t,u,u) \ge \alpha \|u\|^2_V \quad t\in[0,T], v\in V,\ (\text{coerciveness})
\end{equation*}
for some $\alpha>0$ and  $M\geq 0.$ For each $t\in[0,T]$ we associate a unique operator $\A(t)\in \L(V,V')$  such that
\[\fra(t,u,v)=\langle\A(t)u,v\rangle \quad\hbox{ for all } u,v\in V.\]

 \par\noindent Then we say that the non-autonomous Cauchy problem
\begin{equation}\label{Abstract Cauchy problem 0}
\dot{u} (t)+\A(t)u(t)=f(t), \quad u(0)=u_0
\end{equation}
has \textit{$L^2$-maximal regularity in $H$} if for every $f\in L^2(0,T,H)$ and $u_0\in V$ there exists a unique function $u$ belonging  to $\MR(V,H):=L^2(0,T;V)\cap H^1(0,T;H)$ such that $u$ satisfies  (\ref{Abstract Cauchy problem 0}). \par\noindent Considering (\ref{Abstract Cauchy problem 0}) on $V',$  Lions proved on 1961 (see \cite{Lio61} or \cite[p. 620]{DL88}) the following \textit{$L^2$-maximal regularity in $V'$} result:

\begin{theorem}(\textrm{Lions})\label{wellposedness in V'}
 For all $f\in L^2(0,T;V^\prime)$ and $u_0\in H,$ the problem (\ref{Abstract Cauchy problem 0})
has a unique solution $u \in \MR(V,V'):=L^2(0,T;V)\cap H^1(0,T;V').$
\end{theorem}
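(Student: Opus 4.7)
The plan is to use the Lions projection theorem (a non-symmetric, pre-Hilbert variant of Lax--Milgram) applied to a weak formulation in which the time derivative has been shifted from $u$ onto the test functions. This is the classical route, and it fits the generality of the hypotheses: $\fra$ is only measurable in $t$, so there is no semigroup theory or time-regularity available.

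First I would derive the correct weak formulation. Given any sufficiently smooth $v$ with $v(T)=0$, integration by parts converts $\int_0^T \langle \dot u,v\rangle_{V',V}\,\d t$ into $-\int_0^T \langle u,\dot v\rangle_H\,\d t - \langle u(0),v(0)\rangle_H$, so a candidate solution $u\in L^2(0,T;V)$ should satisfy
\begin{equation*}
  \int_0^T \fra(t,u(t),v(t))\,\d t - \int_0^T \langle u(t),\dot v(t)\rangle_H\,\d t
  = \int_0^T \langle f(t),v(t)\rangle_{V',V}\,\d t + \langle u_0,v(0)\rangle_H
\end{equation*}
for every $v$ in the test space $\mathcal{E}:=\{v\in L^2(0,T;V): \dot v\in L^2(0,T;H),\ v(T)=0\}$. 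Note that this formulation requires only $u\in L^2(0,T;V)$ a priori; the time derivative is recovered afterwards.

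Next I would set $\H:=L^2(0,T;V)$ (Hilbert) and equip $\mathcal{E}$ with the pre-Hilbert norm $\|v\|_\mathcal{E}^2:=\|v\|_{L^2(0,T;V)}^2+\|v(0)\|_H^2$. Define the sesquilinear form $E:\H\times\mathcal{E}\to\C$ by $E(u,v):=\int_0^T \fra(t,u,v)\,\d t-\int_0^T \langle u,\dot v\rangle_H\,\d t$ and the antilinear form $L(v):=\int_0^T\langle f,v\rangle_{V',V}\,\d t+\langle u_0,v(0)\rangle_H$. Continuity of $L$ on $\mathcal{E}$ is immediate from Cauchy--Schwarz, and for fixed $v$ the map $u\mapsto E(u,v)$ is continuous on $\H$ using boundedness of $\fra$ and $V\hookrightarrow H$. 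The key point is the coercivity inequality on the diagonal: for $v\in\mathcal{E}$,
\begin{equation*}
  \Re E(v,v)=\int_0^T \Re\fra(t,v,v)\,\d t-\tfrac{1}{2}\int_0^T\tfrac{\d}{\d t}\|v(t)\|_H^2\,\d t
  \geq \alpha\|v\|_{L^2(0,T;V)}^2+\tfrac{1}{2}\|v(0)\|_H^2,
\end{equation*}
since $v(T)=0$. This yields $|E(v,v)|\geq c\|v\|_\mathcal{E}^2$, so Lions' projection theorem furnishes some $u\in\H=L^2(0,T;V)$ with $E(u,v)=L(v)$ for all $v\in\mathcal{E}$.

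It then remains to recover the strong form. Testing with $v\in\mathcal{D}(0,T;V)\subset\mathcal{E}$ gives $\dot u=f-\A(\cdot)u$ in $\mathcal{D}'(0,T;V')$; since $\|\A(t)u\|_{V'}\le M\|u\|_V$, the right-hand side lies in $L^2(0,T;V')$, hence $u\in \MR(V,V')$. Standard embeddings then give $u\in C([0,T];H)$, so $u(0)$ is meaningful. Re-integrating by parts in the variational identity and comparing with the original $E(u,v)=L(v)$ forces $\langle u(0)-u_0,v(0)\rangle_H=0$ for all $v\in\mathcal{E}$; density of traces $\{v(0):v\in\mathcal{E}\}$ in $H$ yields $u(0)=u_0$. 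Uniqueness is a one-line energy argument: for $f=0,\ u_0=0$, the chain rule $\tfrac{\d}{\d t}\|u\|_H^2=2\Re\langle \dot u,u\rangle_{V',V}$ combined with coerciveness gives $\|u(t)\|_H^2+2\alpha\int_0^t\|u\|_V^2\,\d s\le 0$, so $u\equiv 0$.

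The main obstacle is psychological rather than technical: one must resist the urge to look for $\dot u$ inside the variational space. The correct move is to pick the asymmetric pair $(\H,\mathcal{E})$ so that $\dot v$ carries all the regularity and $u$ need only live in $L^2(0,T;V)$; the coercivity estimate then hinges on the exact cancellation produced by $v(T)=0$, which is why the boundary term $\|v(0)\|_H^2$ must be built into the $\mathcal{E}$-norm.
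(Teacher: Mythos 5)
Your argument is correct and is precisely the classical Lions projection-theorem proof that the paper itself does not reproduce but only cites (Lions 1961, Dautray--Lions): the asymmetric pair $\H=L^2(0,T;V)$ versus the pre-Hilbert test space $\mathcal{E}$ with the $\|v(0)\|_H$ term built into its norm, coercivity from the cancellation at $t=T$, recovery of the equation by testing with $\mathcal{D}(0,T;V)$, identification of the initial value by re-integration by parts, and uniqueness via the energy identity $\tfrac{\d}{\d t}\|u\|_H^2=2\Re\langle\dot u,u\rangle$. There is nothing to compare against in the paper, and no gaps in your write-up.
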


 Theorem \ref{wellposedness in V'} requires  only the measurability of  $t\mapsto \fra(t,u,v)$  for all $u,v\in V$. However, in applications to  boundary valued problems maximal regularity in $V'$ is not sufficient. Only the part $A(t)$ of $\A(t)$ in $H$ does realize the boundary conditions in question. One is more interested on $L^2$-maximal regularity in $H:$
\begin{problem}\label{Lions's problem}
If $f\in L^2(0,T; H)$ and $u_0\in V$, does the solution $u$ of (\ref{Abstract Cauchy problem 0}) belong to $H^1(0,T;H)$ ?
\end{problem}
This problem is asked by Lions in \cite[p.\ 68]{Lio61} for $u_0=0$ and $\fra(t,u,v)=\overline{\fra(t,v,u)},$ i.e., $\fra$ is symmetric.  A recent result by  Dier \cite{D1}, show that the answer of this question is negative in general.On the other hand, some positive results are due to Lions 
\cite[p.~68, p.~94, ]{Lio61}, \cite[Theorem~1.1, p.~129]{Lio61} and
\cite[Theorem~5.1, p.~138]{Lio61} and to Bardos \cite{Bar71}
under additional regularity assumptions on the form $\fra,$   the
initial value $u_0$ and the inhomogeneity $f.$ More recently, this
problem has been studied  with some progress and different
approaches by  Arendt, Dier, Laasri and Ouhabaz \cite{ADLO14}, Arendt
and Monniaux \cite{Ar-Mo15}, Ouhabaz \cite{O15}, Dier \cite{D2}, Haak and Ouhabaz
\cite{OH14}, Ouhabaz and Spina
\cite{OS10} and Dier and Zacher \cite{Di-Za16}. Results on multiplicative perturbation are established in \cite{ADLO14, D2, AuJaLa14}. 
\medskip

In this paper we are interested with the following nice  result due to Arendt and Monniaux \cite{Ar-Mo15}:
\begin{theorem}\label{thm: Arendt-Monniaux} Assume that  $D(A(0)^{1/2})=V$ and that  there exists  $0\leq \gamma<1$ and a continuous function $\omega:[0,T]\longrightarrow [0,+\infty)$ with
\begin{equation}\label{eq: thm Arendt-Monniaux}\sup_{t\in[0,T]} \frac{\omega(t)}{t^{\gamma/2}}<\infty \quad \text{ and }
\int_0^T\frac{\omega(t)}{t^{1+\gamma/2}}<\infty
\end{equation}
such that
\begin{equation*}
    |\fra(t,u,v)-\fra(s,u,v)| \le\omega(|t-s|) \Vert u\Vert_{V} \Vert v\Vert_{V_\gamma}\quad \ (t,s\in[0,T], u,v\in V)
\end{equation*}
where $V_\gamma:=[H,V]$ is the complex interpolation space.
Then the Cauchy problem $(\ref{Abstract Cauchy problem 0})$ has $L^2$-maximal regularity in $H$. Moreover, for each  $f\in L^2(0,T,H)$ and $u_0\in V$ the solution $u$ of  $(\ref{Abstract Cauchy problem 0})$ is continuous on $[0,T]$ with values in $V.$
\end{theorem}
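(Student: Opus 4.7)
The plan is to execute the three-step approximation scheme advertised in the abstract. Given a subdivision $\Lambda:\,0=t_0<t_1<\cdots<t_N=T$ with mesh $|\Lambda|=\max_k(t_{k+1}-t_k)$, I set
\begin{equation*}
\fra_\Lambda(t,\cdot,\cdot):=\fra(t_k,\cdot,\cdot),\quad \A_\Lambda(t):=\A(t_k),\qquad t\in[t_k,t_{k+1}),
\end{equation*}
and solve (\ref{Abstract equation approx}) inductively on each subinterval $[t_k,t_{k+1})$ starting from the previously obtained value $u_\Lambda(t_k)$. Each piece is autonomous, so its solvability reduces to the classical autonomous Lions/de~Simon theorem; the input $D(A(0)^{1/2})=V$ together with the H\"older-type hypothesis propagates, via the relative form-boundedness of $\fra(t_k,\cdot,\cdot)-\fra(0,\cdot,\cdot)$, to the identity $D(A(t_k)^{1/2})=V$ with constants controlled by $\omega$, which is exactly what gives $L^2$-maximal regularity in $H$ on each piece. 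Matching values at the grid points produces a global $u_\Lambda\in\MR(V,H)\cap C([0,T];V)$.

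Next I would derive a priori bounds on $u_\Lambda$ uniform in $\Lambda$. Testing (\ref{Abstract equation approx}) against $\dot u_\Lambda(t)$ in $H$ and using coerciveness, boundedness, and the piecewise-constant nature of $\fra_\Lambda$, one obtains an energy identity yielding
\begin{equation*}
\|\dot u_\Lambda\|_{L^2(0,T;H)}+\|u_\Lambda\|_{L^2(0,T;V)}+\|u_\Lambda\|_{C([0,T];V)}\le C\bigl(\|u_0\|_V+\|f\|_{L^2(0,T;H)}\bigr)
\end{equation*}
with $C$ depending only on $\alpha,M,T$. In particular $(u_\Lambda)_\Lambda$ is uniformly bounded in $\MR(V,H)\cap C([0,T];V)$.

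The heart of the argument is to show that $(u_{\Lambda_n})$ is Cauchy in $\MR(V,H)\cap C([0,T];V)$ for a sequence $\Lambda_n$ whose mesh is tuned to $\omega$. For two subdivisions with $\Lambda'\supset\Lambda$, the difference $v:=u_\Lambda-u_{\Lambda'}$ satisfies
\begin{equation*}
\dot v(t)+\A_{\Lambda'}(t)v(t)=g_\Lambda(t):=\bigl(\A_{\Lambda'}(t)-\A_\Lambda(t)\bigr)u_\Lambda(t),\qquad v(0)=0,
\end{equation*}
and the H\"older-type hypothesis of the theorem gives the crucial dual bound $\|g_\Lambda(t)\|_{V_\gamma'}\le\omega(|\Lambda|)\,\|u_\Lambda(t)\|_V$. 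I would then estimate $v$ in $\MR(V,H)$ by a Duhamel representation on each autonomous piece, using the analytic-semigroup smoothing $\|e^{-s A(t_k)}x\|_H\lesssim s^{-\gamma/2}\|x\|_{V_\gamma'}$ and its gradient counterpart. Integrating these smoothing kernels against $g_\Lambda$ yields an integral in which the weight $\omega(t)/t^{1+\gamma/2}$ appears precisely, and the second part of (\ref{eq: thm Arendt-Monniaux}) ensures this weight is in $L^1(0,T)$. This shows $v\to 0$ in $\MR(V,H)$, and the embedding $\MR(V,H)\hookrightarrow C([0,T];[V,H]_{1/2})$ combined with a separate energy estimate in $C([0,T];V)$ (again driven by the Dini integrability) delivers convergence in $C([0,T];V)$ as well. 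Since the bounds are linear in $(u_0,f)$, the convergence is uniform on bounded sets of $V\times L^2(0,T;H)$.

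The main obstacle is making the semigroup-smoothing step genuinely uniform in $\Lambda$: sectoriality angles, boundedness constants of $A(t_k)^{1/2}$, and the $V_\gamma'$-$H$ resolvent estimates must be controlled independently of $k$, and the contributions from each subinterval must sum up under the weight $\omega(s)/s^{1+\gamma/2}$ to a quantity vanishing with $|\Lambda|$. Once the Cauchy step is in hand, the limit $u$ is identified as a solution of (\ref{Abstract Cauchy problem 0}) by passing to the limit in the variational formulation $\int_0^T\fra_{\Lambda_n}(t,u_{\Lambda_n}(t),\varphi(t))\,\d t\to\int_0^T\fra(t,u(t),\varphi(t))\,\d t$ for test functions $\varphi$, which uses strong $L^2$ convergence of $u_{\Lambda_n}$ into $V$ and the continuity of $\fra$.
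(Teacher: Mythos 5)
Your overall architecture (discretize, prove bounds uniform in the subdivision, show the approximants converge, identify the limit variationally) is exactly the strategy the paper advertises, and two of your ingredients are sound: propagating $D(A(t)^{1/2})=V$ from $t=0$ via the relative bound $|\fra(t,u,v)-\fra(0,u,v)|\le\omega(T)\|u\|_V\|v\|_{V_\gamma}$ (the paper does this in Proposition \ref{square root property for the Linear-approximation} using \cite[Proposition 2.5]{Ar-Mo15}), and identifying the limit through Lions' $V'$-theory. But there is a genuine gap at the single decisive step, the uniform a priori estimate. You claim that testing (\ref{Abstract equation approx}) against $\dot u_\Lambda$ yields $\|\dot u_\Lambda\|_{L^2(0,T;H)}+\|u_\Lambda\|_{C([0,T];V)}\le C\big(\|u_0\|_V+\|f\|_{L^2(0,T;H)}\big)$ with $C=C(\alpha,M,T)$. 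For a non-symmetric form this fails: $\Re\,\fra(t_k,u_\Lambda,\dot u_\Lambda)$ is not a total time derivative, and the antisymmetric remainder $\Re\,\fra(u,\dot u)-\Re\,\fra(\dot u,u)$ cannot be absorbed since $\dot u_\Lambda$ is only in $H$. More decisively, if such a bound held with a constant depending only on $\alpha,M,T$, then letting $|\Lambda|\to0$ would give $L^2$-maximal regularity in $H$ for every bounded coercive merely measurable form, contradicting Dier's counterexample \cite{D1} cited in the introduction. The H\"older--Dini hypothesis and the square root property must enter the a priori bound itself. The paper obtains it from the Acquistapace--Terreni representation (\ref{Formula for the solution}): $\dot u_\Lambda=(I-Q_\Lambda)^{-1}(\A_\Lambda u_{\Lambda,1}+\A_\Lambda u_{\Lambda,2})$, with $\|Q_\Lambda\|_{\L(L^2(0,T,H))}<1$ uniformly in $\Lambda$ after a shift (Lemma \ref{Lemma: Invertibility of I-QLambda}) and uniform bounds on $\A_\Lambda u_{\Lambda,1},\A_\Lambda u_{\Lambda,2}$ (Lemma \ref{Lemma: The uniform  Boundedness in $L^2(0,T,H)$}), all resting on the $V_\gamma'\to H\to V$ resolvent and semigroup estimates of Proposition \ref{lemma: estimations for general from} and on $\int_0^T\omega(t)t^{-1-\gamma/2}\,{\rm d}t<\infty$. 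Your later Cauchy-sequence and $C([0,T];V)$ steps presuppose $MR(V,H)$ bounds for $\A_{\Lambda'}$ uniform in $\Lambda'$, so the gap is not localized to one step.

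A secondary but substantive difference: the paper does not use your piecewise-constant discretization but a piecewise-linear interpolation of interval averages of $\fra$. This is not cosmetic. It makes $t\mapsto\fra_\Lambda(t,u,v)$ piecewise $C^1$, so Bardos' theorem \cite{Bar71} (or \cite[Theorem 4.2]{ADLO14}) gives maximal regularity in $H$ for the full non-autonomous approximate problem in one stroke, avoiding the gluing of autonomous pieces whose composite constant can degenerate like $C^N$ as the number $N$ of subintervals grows; and it admits a modulus of continuity $\omega_\Lambda$ whose Dini integral is bounded uniformly in $\Lambda$ (Proposition \ref{Prop: Dini condition for Linear-approximation}), which is precisely what makes the $Q_\Lambda$-estimates and hence the whole scheme uniform. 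If you wish to keep the piecewise-constant approximation, you would still need to establish the analogues of these uniform statements rather than the energy identity you propose.
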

The aim of this paper is to give an explicit approximation
of the problem $(\ref{Abstract Cauchy problem 0})$ under the assumption of Theorem \ref{thm: Arendt-Monniaux}, which is very useful to obtain qualitative properties of the unknown solution $u$ of $(\ref{Abstract Cauchy problem 0}).$  The method employs an approximation by discretisation  of the function  $\A(.):[0,T]\mapsto \L(V,V')$ and then taking a suitable limit.  Namely, let
$\Lambda:=(0=\lambda_0<\lambda_1<...<\lambda_{n+1}=T)$ be a
subdivision of $[0,T].$ Consider an approximation
$\A_\Lambda:\ [0,T]\rightarrow \mathcal{L}(V,V')$ of $\A$ given
by
\begin{equation*}
 \A_{\Lambda}(t):=\frac{\lambda_{k+1}-t}{\lambda_{k+1}-\lambda_k}\A_{k}+\frac{t-\lambda_k}{\lambda_{k+1}-\lambda_k}\A_{k+1} \quad\hbox{   for  } t\in [\lambda_k,\lambda_{k+1}]
\end{equation*}  with

 \[\displaystyle \A_ku
:=\frac{1}{\lambda_{k+1}-\lambda_k}
\int_{\lambda_k}^{\lambda_{k+1}}\A(r)u{\rm  d}r,\quad  u\in V,
k=0,1,...,n.\]  The integral above makes sense since  $t\mapsto \mathcal A(t)u$ is Bochner integrable on $[0,T]$ with values in $V'$ for all $u\in V.$ Note that $\|\mathcal A(t)u\|_{V'}\leqslant M \|u\|_V$ for all $u\in V$ and all $t\in [0,T]$ and $t\mapsto \mathcal A(t)$ is strongly measurable by the Pettis' Theorem \cite[Theorem 1.1.1]{ABHN11}. This is true  since $V$ and $H$ are separable and $t\mapsto \mathcal A(t)$ is weakly measurable.
\par\noindent We prove that for all $u_0\in V$ and
$f\in L^2(0,T;H),$  the non-autonomous problem
 \begin{equation}\label{nCP in V'0}
\dot u_\Lambda (t)+\A_\Lambda(t)u_\Lambda(t)=f(t), \quad  a.e.\  \text{on}\ (0,T), \quad u_\Lambda(0)=u_0
\end{equation}
 \noindent has a unique solution $u_{\Lambda}\in MR(V,H)\cap C(0,T,V),$ and $(u_\Lambda)_\Lambda$ converges weakly in $MR(V,H)$ as $|\Lambda|\to 0,$ and the weak limit $u:=\lim\limits_{ |\Lambda|\to
0}u_{\Lambda}$ solves uniquely $(\ref{Abstract Cauchy problem 0}).$ This provides an alternative proof of Theorem \ref{thm: Arendt-Monniaux} and an approximation of the solution. Moreover, we show that for each null sequence $(t_n)_{n\in\N}\subset \R_+$ such that
\begin{equation*}\label{condition supp pour convergence uniforme0}
\lim\limits_{n\to \infty}\frac{\omega(t_n)}{t_n^{\gamma/2}}=0
\end{equation*}
and all uniforme subdivision  $(\Lambda_n)_{n\in\N}$ of $[0,T]$ with $|\Lambda_n|=\frac{t_n}{2},$ the sequence $(u_{\Lambda_n})_{n\in\N}$ converges (strongly) to $u$ in $MR(V,H)\cap C(0,T,V)$ as $n
\rightarrow0$ uniformly on the initial datas $(x_0,f).$ Thanks to (\ref{eq: thm Arendt-Monniaux}), such a null sequence exists .  If, in addition, we assume  that
\begin{equation*}\label{condition supp pour convergence uniforme}
\lim\limits_{t\to 0}\frac{\omega(t)}{t^{\gamma/2}}=0,
\end{equation*}
then we show that $(u_\Lambda)_{\Lambda}$ converges to $u$ uniformly on the initial datas $(u_0,f)$ in $MR(V,H)\cap C(0,T,V)$  as $|\Lambda|\to 0$ for arbitrary uniform subdivision $\Lambda$ of $[0,T].$ More precisely, we obtain that
\begin{equation*}
\|u-u_\Lambda\|_{MR}\leq \textbf{c}\Big[\omega(2|\Lambda|)+\frac{\omega(2|\Lambda|)}{|\Lambda|^{\gamma/2}}+\int_0^{2|\Lambda|}\frac{\omega(s)}{s^{1+\gamma/2}}{\rm d}s\Big]\Big[\|u_0\|_V+\|f\|_{L^2(0,T;H)}\Big],
\end{equation*}
and
\begin{equation*}
\|u-u_\Lambda\|_{C(0,T,V)}\leq\textbf{c}\Big[\omega(2|\Lambda|)+\frac{\omega(2|\Lambda|)}{|\Lambda|^{\gamma/2}}+\int_0^{2|\Lambda|}\frac{\omega(s)}{s^{1+\gamma/2}}{\rm d}s\Big]\Big[\|u_0\|_V+\|f\|_{L^2(0,T;H)}\Big]
\end{equation*}
for some positive constant $\textbf{c}>0$ depending only on $M,\alpha,\gamma$ and $c_H,$ where $c_H$ is the continuous embedding constant of $V$ into $H.$ For this we first prove that $(u_\Lambda)_\Lambda$ converges in $MR(V,V')$ uniformly on the initial datas $(u_0,f)$ as $|\Lambda|\to 0.$ This will be proved in Section \ref{Banach space case} in a more general situation.
\medskip 
\par\noindent It is well known that the solution of a non-autonomous linear evolution equation can be given by a strongly continuous evolution family $\{U(t,s): 0\leq s\leq t\leq T\}\in \L(H).$ Our approximation approach will allows us to study  whether or  not  this evolution family is eventually norm continuous. This will be the subject of a future work.
%

\section{Uniform convergence on Banach spaces \label{Banach space case}}
In this section we consider a more general setting. Namely, let $(D,\Vert .\Vert_D)$ and $(X,\Vert .\Vert)$ be
 two Banach spaces such that $D$ is continuously and densely embedded into $X$ (we write $D \underset d \hookrightarrow X$)  and let $A:[0,T]\to \mathcal{L}(D,X)$ be a strongly measurable and bounded function. Let $p\in(1,\infty)$ be fixed.
 \begin{definition}\label{def maximal regularity}
 We say that $A$ has {\rm
$L^p$-maximal regularity} on the bounded interval $[0,T],$ and
we write $A\in \mathcal{MR}_p(0,T),$ if for each interval
$[a,b]\subset[0,T]$ and  every $f\in L^p(a,b ; X)$ there exists
a unique function $u$ belonging to $L^p(a,b; D)\cap W^{1,p}(a,b; X)$ such that 
\begin{equation}\label{CPNA-x}
   \dot{u}(t)+A(t)u(t)=f(t)\ \  \hbox{a.e. on}\ [a,b],\  u(a)=0.
\end{equation}
\end{definition}

Note  that $W^{1,p}(a,b;X)\subset C([a,b];X),$ so that $u(a)=0$ in (\ref{CPNA-x}) is well defined. The maximal regularity space \[MR_p(D,X):=MR_p(a,b, D,X):=L^p(a,b; D)\cap W^{1,p}(a,b; X)\] is a Banach space for the norm
\[\|u\|_{MR}: =\|u\|_{L^p(a,b ;D)}+\|u\|_{ W^{1,p}(a,b ; X)}.\]
Definition \ref{def maximal regularity} can be reformulate in terms of sum methods. For this, we denote by $MR_{0}(a,b,D,X)$ the closed subspace of $MR_p(a,b,D,X)$ consisting
of all functions $u$ that satisfies $u(a)=0.$ For each $[a,b]\subset [0,\tau]$ consider the two unbounded linear operators $\frA=\frA_{a,b}$ and
$\frB=\frB_{a,b}$ with domains $D(\frA)=L^p(a,b;D)$ and $D(\frB)=\{u\in
W^{1,p}(a,b; X), u(a)=0\}$ defined by
\[(\frA f)(t)=A(t)f(t)\ \ \hbox{   and  } \ \ (\frB u)(t)=\dot{u}(t) \ \hbox{ for almost every } t\in[a,b].\]
\par\noindent Thus $A: [0,T]\to \mathcal{L}(D,X)$ has $L^p-$maximal regularity if and only if
the unbounded operator $\frA+\frB$ with domain $D(\frA+\frB)=MR_0(D,X)$ is invertible.

\begin{remark}\label{remark:restriction+translation} (i)\ Assume that $A\in \mathcal{MR}_p(0,T).$ Then the uniqueness of solvability in each subinterval
$[a,b]$ implies that $(\mathcal{A}_{a,b}+\mathcal{B}_{a,b})^{-1}$
is the restriction to $L^p(a,b;X)$ of $(\mathcal{A}_{0,T}+\mathcal{B}_{0,T})^{-1}.$
\par (ii)\ Remark that  $A\in\mathcal{MR}_p(D,X)$ if and only if
 $\rho +A\in \mathcal{MR}_p(D,X)$  for some (or all) $\rho\in \mathbb{C}.$ In fact, if $f\in L^p(a,b; X),$ $\rho\in \mathbb{C}$ and
 $g(t):= e^{\rho t }f(t).$ Then a function $u\in MR_p(D,X)$ satisfies
\begin{equation*}\label{eq9}\dot{u}(t)+A(t)u(t)+\rho u(t)=f(t),\ \hbox{a.e. on}\ [a,b], \ \
u(a)=0\end{equation*} if and only if $v(\cdot):=e^{\rho \cdot }u(\cdot)\in MR_p(D,X)$
satisfies \begin{equation*}\label{eq10}\dot{v}(t)+A(t)v(t)=g(t),\ \hbox{a.e.
on } [a,b], \ \ v(a)=0. \end{equation*}
\end{remark}
\bigskip 

\par If $A\in \mathcal{MR}_p(0,T),$ then for all
$0\leq a\leq b\leq T$ the homogeneous problem
\begin{equation}\label{homog}
   \dot{u}(t)+A(t)u(t)=f(t)\ \  \hbox{a.e. on}\ [a,b],\  u(a)=x
\end{equation}
has a unique solution
 $u\in MR_p(D,X)$ for all $f\in L^p(a,b; X)$ and for all $x$ in
 the \textit{ trace space }
 $$ Tr=Tr_p(a,b,D,X):=\{u(a),\ u\in MR_p(a,b,D,X)\}.$$
  The trace space is a Banach space with the norm
\[\|x\|_{Tr}:=\inf \left\lbrace \|u\|_{MR}: u(a)=x\right\rbrace.\]
Note that the trace space does not depend on the interval $[a,b].$
 It is isomorphic to the real interpolation space
 $(X,D)_{\frac{1}{p*},p},$ where $\frac{1}{p*}+\frac{1}{p}=1$. Moreover, $$MR_p(D,X)\underset{d}{\hookrightarrow}C([a,b];Tr).$$ The reader may consults e.g., \cite{ACFP}, \cite{Pr-Sc} \cite{Po-St} and the references therein for further references.

\medskip
For autonomous Cauchy problems, that is if $A(.)= A$ is constant, $L^p$-maximal regularity is independent of the bounded interval $[0,T]$ and of $p\in(1,\infty)$ \cite{Ku-We, Ca-Ve, Sob}. Further, if $A$ has $L^p$-maximal regularity then $A$ is closed as unbounded operator on $X$ and
$-A$  generates a holomorphic $C_0$-semigroup $(T(t))_{t \geq 0}$
 on $X$ \cite{A-B3, Do, Ku-We}. In Hilbert spaces an operator $A$ has $L^p$-maximal
 regularity if and only if $-A$ generates a holomorphic $C_0$-semigroup \cite{DSi}.
 This equivalence is restricted to Hilbert spaces \cite{Ka-La}, see also \cite{Fa14}. In this section we will denote by $\mathcal{MR}$ the set of all  operators
  $A\in \mathcal{L}(D,X)$ having $L^p$-maximal regularity.
\\
\par\noindent Now we recall that a strongly measurable function $A:[0,T]\longrightarrow \L(D,X)$ is (uniformly) relatively continuous (in the
  sense of \cite[Definition 2.5]{ACFP}) if for every $\varepsilon>0$  there exist $\delta> 0$ and $\eta\geq
  0$ such that for all $x\in D$ and for all $t,s\in[0,T]$ one has
  \begin{equation}\label{Defn:relative continuity}
  \Vert A(t)x-A(s)x\Vert\leq \varepsilon \Vert x \Vert_D+
  \eta\Vert x\Vert\
	\end{equation}  whenever $\vert t-s\vert \leq \delta.$ Note that if $A$ is
relatively continuous then $A$ is bounded. The notion of relative continuity was introduced
in  by  Arendt, Chill, Fornaro and Poupaud,
to establish $L^p$-maximal regularity \cite[Theorem 2.7]{ACFP}. 

%

Next we assume that there exists an
approximation $A_n: [0,\tau]\longmapsto \mathcal{L}(D,X)$ (strongly
measurable)  of $A$ with  the following
properties:\vskip2mm
\begin{itemize}
 \item[$(H_1)$] there exists $C>0$ such that
    $\|A_n(t)\|_{\mathcal{L}(D,X)}\leq C$\ \  for all $t\in[0,\tau]$
    and $n\in \mathbb{N},$
\vskip2mm

 \item[$(H_2)$] for each $x\in D$
    one has $ A_n(t)x \to A(t)x$ as $n\to \infty$ in $X$ $t-$a.e.
on $[0,\tau],$
 \vskip2mm \item[$(H_3)$] for every $\varepsilon>0$
there exist
     $\eta\geq 0,\  n_0\in\mathbb{N}$ such that for all $x\in
     D,
     n\geq n_0, t\in[0,\tau]$ one has
    \begin{equation}\label{Defn:relative continuity approx}
		\Vert A_n(t)x-A(t)x\Vert\leq \varepsilon
     \Vert x\Vert_D+\eta\Vert x\Vert.
		\end{equation}
\vskip2mm
\item[$(H_4)$]  $A_n\in
    \mathcal{MR}_p(0,T)$ for all $n\in \mathbb{N}.$\end{itemize}
Then the following stability result was proved by EL-Mennaoui and Laasri \cite[Theorem 4.5]{ELLA13}.
\begin{theorem}\label{staglob} Let $A: [0,T]\longrightarrow
\mathcal{L}(D,X)$ be strongly measurable and relatively
continuous. Assume that $A(t)\in \mathcal{MR}$ for all
$t\in[0,T]$ and $A_n$ satisfy the hypothesis $(H_1)-(H_4)$. Let
$u_n\in Tr$ and $f_n\in L^p(0,T;X)$ such that
$x_n\longrightarrow x$ in $Tr$ and
  $f_n\longrightarrow f$ in $L^p(0,T;X).$ Then the sequence $(u_n)_{n\in\N}$ of solutions of 
	\begin{equation}\label{eqq1}
  \dot{u}_n(t)+A_n(t)u_n(t)=f_n(t) \quad{a.e} \quad\text
   on \quad [0,T],\
 \ \ \ \ u_n(0)=x_n \
\quad \end{equation} converges in $MR_p(D,X)$ and
$u:=\lim\limits_{n\to \infty }u_n$ is the unique solution of
\begin{equation}\label{eqq2}
  \dot{u}(t)+A(t)u(t)=f(t) \quad{a.e} \quad\text on \quad [0,T],\
 \ \ \ \ u(0)=x.\
\end{equation}
\end{theorem}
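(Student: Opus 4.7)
The plan is to reduce the assertion to showing $v_n := u_n - u \to 0$ in $MR_p(D,X)$. Since $A$ is relatively continuous and $A(t) \in \mathcal{MR}$ for each $t$, the Arendt--Chill--Fornaro--Poupaud theorem \cite{ACFP} ensures $A \in \mathcal{MR}_p(0,T)$, so the limit problem (\ref{eqq2}) has a unique solution $u \in MR_p(D,X)$. Subtracting the two Cauchy problems, $v_n$ satisfies
\begin{equation*}
\dot v_n(t) + A_n(t) v_n(t) = g_n(t), \qquad v_n(0) = x_n - x,
\end{equation*}
where $g_n := (f_n - f) + (A - A_n)u \in L^p(0,T;X)$.

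The heart of the proof is a uniform $L^p$-maximal regularity estimate for $A_n$: I would establish that there exist $n_0 \in \N$ and $M_0 > 0$ with $A_n \in \mathcal{MR}_p(0,T)$ and $\|(\frA_n + \frB)^{-1}\|_{L^p \to MR_0} \le M_0$ for all $n \ge n_0$. Writing
\begin{equation*}
\frA_n + \frB = (\frA + \frB) + (\frA_n - \frA),
\end{equation*}
hypothesis $(H_3)$ yields, for any $\varepsilon > 0$, indices $n_0$ and $\eta \ge 0$ with $\|(A_n(t) - A(t))w(t)\|_X \le \varepsilon \|w(t)\|_D + \eta \|w(t)\|_X$. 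For $w \in MR_0([a,b], D, X)$, the elementary bound $\|w\|_{L^p([a,b];X)} \le (b-a)\|\dot w\|_{L^p([a,b];X)}$ absorbs the $\eta$-term into the $MR$-norm on any subinterval shorter than $\varepsilon/\eta$. Chopping $[0,T]$ into subintervals of this length and running a Neumann series against $(\frA + \frB)^{-1}$ on each piece, whose norm is bounded by its norm on $[0,T]$ via Remark \ref{remark:restriction+translation}(i), yields invertibility of $\frA_n + \frB$ on each subinterval with a uniform bound. Gluing the pieces, using the continuous embedding $MR_p \hookrightarrow C([a,b]; Tr)$ to transfer the value of the partial solution at each partition point into the initial datum for the next subinterval, produces the sought global uniform estimate on $[0,T]$.

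This gluing step is the main obstacle, because continuing the $MR_0$-solution across a partition point requires a uniform estimate on the inhomogeneous problem with nonzero initial data in $Tr$, together with a bounded lifting back to $MR_p$ compatible with the previous piece. Once the uniform bound is in hand, the conclusion is straightforward: lifting $x_n - x$ to some $\phi_n \in MR_p(D,X)$ with $\phi_n(0) = x_n - x$ and $\|\phi_n\|_{MR} \le 2\|x_n - x\|_{Tr}$ reduces $v_n - \phi_n$ to an $MR_0$-problem, and the uniform bound gives
\begin{equation*}
\|v_n\|_{MR} \le c\bigl(\|x_n - x\|_{Tr} + \|g_n\|_{L^p(0,T;X)}\bigr).
\end{equation*}
Finally, $f_n - f \to 0$ in $L^p$ by hypothesis, and for the remaining term $(H_1)$ yields the integrable domination $\|(A_n(t) - A(t))u(t)\|_X \le (C + M_A)\|u(t)\|_D$, while $(H_2)$ gives pointwise convergence to $0$ almost everywhere; dominated convergence therefore closes the argument.
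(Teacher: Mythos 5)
Your argument is correct in substance but takes a genuinely different route from the one the paper relies on. The paper does not reprove Theorem \ref{staglob}: it quotes it from \cite[Theorem 4.5]{ELLA13}, and the argument it mirrors (in the proof of Theorem \ref{Thm: uniform convergence for inhomogenuous case in Banach space}) freezes the coefficient at a point $t_0$, writes both $\rho+\frA+\frB$ and $\rho+\frA_n+\frB$ as perturbations of the \emph{autonomous} operator $\rho+\frA(t_0)+\frB$ on short subintervals, and absorbs the lower-order term $\eta\|\cdot\|_X$ coming from relative continuity and $(H_3)$ by taking $\rho$ large, via the decay $\|(\rho+\frA(t_0)+\frB)^{-1}\|_{\L(L^p)}\le M(A)/(1+\rho)$; the two Neumann series are then compared term by term. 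You instead invoke \cite[Theorem 2.7]{ACFP} to obtain $A\in\mathcal{MR}_p(0,T)$ outright, perturb $\frA_n+\frB$ around the non-autonomous operator $\frA+\frB$ itself, and absorb the $\eta$-term through the Poincar\'e-type bound $\|w\|_{L^p(a,b;X)}\le (b-a)\|\dot w\|_{L^p(a,b;X)}$ on $MR_0$. Both absorption mechanisms work; yours is shorter but outsources the hardest analytic content to \cite{ACFP}, whereas the frozen-coefficient, $\rho$-shifted decomposition is self-contained and is precisely the quantitative skeleton the paper reuses for the uniform-in-data estimates of Section \ref{Banach space case}.

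Two points should be made explicit before the argument is complete. First, the gluing you flag as ``the main obstacle'' is genuinely needed and is exactly the paper's continuation step (Remark \ref{remark:restriction+translation}(i), the embedding $MR_p\hookrightarrow C([a,b];Tr)$, and a lifting of traces with norm controlled by $2\|\cdot\|_{Tr}$); your description is the right one, but the uniform-in-$n$ bookkeeping across the finitely many subintervals, including the $L^p$-bound on $A_n\phi_j$ via $(H_1)$ for each lifted trace $\phi_j$, has to be carried out to produce the constant $M_0$. Second, in the final dominated-convergence step the exceptional null set in $(H_2)$ depends on the vector $x\in D$, so the pointwise statement $A_n(t)u(t)\to A(t)u(t)$ for a.e.\ $t$ is not immediate: you must use that $u$, being Bochner measurable, is essentially separably valued in $D$, fix a countable dense subset of that separable subspace to get a single null set, and interpolate with $(H_1)$ and the boundedness of $A$. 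This is routine but not automatic from $(H_2)$ as stated.
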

The aim of this section is to show that for $x=x_n=0$ the convergence established   in  Theorem \ref{staglob} is actually uniform with respect the the inhomogeneity $f.$ If $\eta=0$ in (\ref{Defn:relative continuity approx}), then we obtain that such a convergence is uniform with respect to  both initial datas $f$ and $x$.
\begin{theorem}\label{Thm: uniform convergence for inhomogenuous case in Banach space} Let $A: [0,T]\longrightarrow
\mathcal{L}(D,X)$ be strongly measurable and relatively
continuous. Assume that $A(t)\in \mathcal{MR}$ for all
$t\in[0,T]$ and $A_n$ satisfy the hypothesis $(H_1)-(H_4)$. Then for every $\varepsilon>0$ there exists $n_0\in\N$ such that for all $n\geq n_0$ one has
\begin{equation}\label{uniform estimate for inhomogenuous case}\|(\frA+\frB)^{-1}-(\frA_n+\frB)^{-1}\|_{\L(L^p(0,T,X), MR_p(D,X))}\leq \varepsilon. \end{equation}
\end{theorem}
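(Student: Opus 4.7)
The plan is to combine a resolvent-type identity with the strong convergence in Theorem~\ref{staglob} and a quantitative application of $(H_3)$. Set $S:=(\frA+\frB)^{-1}$ and $S_n:=(\frA_n+\frB)^{-1}$. For $f\in L^p(0,T;X)$ let $u=Sf$, $u_n=S_nf$ and $w=u-u_n$; then $w(0)=0$ and a direct computation gives $\dot w+A_n w=-(A-A_n)u$, so that
\[
S-S_n \;=\; -\,S_n\,(\frA-\frA_n)\,S \qquad\text{in}\;\L\bigl(L^p(0,T;X),MR_p(D,X)\bigr).
\]
Theorem~\ref{staglob} says $S_nf\to Sf$ in $MR_p(D,X)$ for each $f$, so by the uniform boundedness principle $K:=\sup_n\|S_n\|<\infty$. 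Consequently
\[
\|Sf-S_nf\|_{MR}\;\le\;K\,\|(\frA-\frA_n)Sf\|_{L^p(X)},
\]
and the whole problem reduces to showing that $(\frA-\frA_n)S\to 0$ in the norm of $\L(L^p(0,T;X))$.

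Fix $\varepsilon_1>0$ (to be optimised later). By $(H_3)$ there exist $\eta=\eta(\varepsilon_1)\ge 0$ and $n_0\in\N$ such that, for every $n\ge n_0$, $t\in[0,T]$ and $x\in D$,
\[
\|(A_n(t)-A(t))x\|_X\le\varepsilon_1\|x\|_D+\eta\|x\|_X.
\]
Applying this pointwise to $x=u(t)=(Sf)(t)$ and integrating in $t$ yields
\[
\|(\frA-\frA_n)u\|_{L^p(X)}\le c_p\bigl(\varepsilon_1\|u\|_{L^p(D)}+\eta\|u\|_{L^p(X)}\bigr).
\]
Since $\|u\|_{L^p(D)}\le\|u\|_{MR}\le\|S\|\,\|f\|_{L^p}$, the first term causes no trouble. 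The difficulty is the second term, because $\eta$ depends on $\varepsilon_1$ and is in general very large, while the naive bound $\|u\|_{L^p(X)}\le T\|u\|_{MR}$ is insufficient.

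To handle the $\eta\|u\|_{L^p(X)}$ term I would use the vanishing initial condition $u(0)=0$ together with a subdivision of $[0,T]$ into $N$ intervals $I_k=[\tau_k,\tau_{k+1}]$ of length $\delta=T/N$. For any $v\in MR_p(I_k,D,X)$ with $v(\tau_k)=0$ the fundamental theorem of calculus yields $\|v\|_{L^p(I_k;X)}\le p^{-1/p}\delta\,\|v\|_{MR(I_k)}$, and by Remark~\ref{remark:restriction+translation}(i), extending by zero shows that the restrictions of $S$ and $S_n$ to each $I_k$ have operator norm at most $\|S\|$ and $K$ respectively. Choosing $\delta$ so small that $\eta\delta\le\varepsilon_1$ absorbs the $\eta$-contribution into the $\varepsilon_1$-contribution on any interval with zero initial value. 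Starting from $I_0$, one bootstraps across the remaining intervals by splitting $w$ on $I_k$ into the inhomogeneous part with zero initial (treated as on $I_0$) and a homogeneous part $\dot h+A_nh=0$ with $h(\tau_k)=w(\tau_k)$, whose $MR$-norm is estimated through the embedding $MR_p(I_k,D,X)\hookrightarrow C(I_k;Tr)$ with constant independent of $k$.

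The main technical obstacle is the bookkeeping of this iteration: $N=T/\delta$ grows with $\eta(\varepsilon_1)$, and it must be checked that the per-interval constants compound only polynomially in $N$, so that the small factor $\varepsilon_1$ still dominates. Assuming this, summing the $N$ contributions yields $\|Sf-S_nf\|_{MR}\le C\varepsilon_1\|f\|_{L^p}$ with $C$ independent of $\varepsilon_1$ and of $n\ge n_0$, and then choosing $\varepsilon_1:=\varepsilon/C$ gives the required norm estimate (\ref{uniform estimate for inhomogenuous case}).
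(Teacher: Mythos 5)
Your reduction is sound up to the point you yourself flag: the identity $(\frA+\frB)^{-1}-(\frA_n+\frB)^{-1}=-(\frA_n+\frB)^{-1}(\frA-\frA_n)(\frA+\frB)^{-1}$ is correct, and the uniform bound $K=\sup_n\|(\frA_n+\frB)^{-1}\|$ via Banach--Steinhaus is legitimate (the paper uses the same observation in the proof of Theorem \ref{Thm: uniform convergence for homogenuous case in Banach space}). But the proposed treatment of the $\eta\|\cdot\|_X$ term from $(H_3)$ does not close, and that term is the whole difficulty of the theorem. First, subdividing $[0,T]$ does not shrink it: writing $u=(\frA+\frB)^{-1}f$, one has $\sum_k\eta^p\|u\|^p_{L^p(I_k;X)}=\eta^p\|u\|^p_{L^p(0,T;X)}$ no matter how small $\delta$ is, and on every interval $I_k$ with $k\geq1$ the source $(\frA-\frA_n)u$ involves $u$ itself, which does not vanish at $\tau_k$; the zero-initial-value trick applies to the difference $w$ but not to $u$, so the best available bound is $\eta\|u\|_{L^p(I_k;X)}\lesssim\eta\,\delta^{1/p}\|u\|_{C([0,T];X)}$, forcing $\delta\lesssim(\varepsilon_1/\eta)^p$. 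Second, with that choice the number of intervals is $N\sim T(\eta(\varepsilon_1)/\varepsilon_1)^p\to\infty$ as $\varepsilon_1\to 0$, and the gluing across intervals (through the trace of $w$ at each $\tau_k$) compounds a constant $C\geq 1$ per step, so the final bound carries a factor of order at least $N$ (and generically $C^N$) multiplying $\varepsilon_1$; since $N\varepsilon_1\sim T\eta^p/\varepsilon_1^{p-1}$ does not tend to $0$, the smallness is destroyed. The ``main technical obstacle'' you mention is therefore not bookkeeping but a genuine failure of the strategy.

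The paper's proof escapes this by never inverting the non-autonomous operator against itself. On a short interval, of length $\delta$ fixed by relative continuity and independent of $\varepsilon$, it writes both $(\frA+\frB)^{-1}$ and $(\frA_n+\frB)^{-1}$ as Neumann series around the frozen-coefficient resolvent $(\rho+\frA(t_0)+\frB)^{-1}$ and compares them term by term; the $\eta$-term is absorbed using the decay $\|(\rho+\frA(t_0)+\frB)^{-1}\|_{\L(L^p(a,b;X))}\leq M(A)/(1+\rho)$, i.e.\ one first fixes $\varepsilon_1$ and the resulting $\eta$, and only then chooses $\rho$ large so that $\eta M(A)/(1+\rho)$ is small. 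This large-parameter shift, available for the autonomous frozen operator but not obviously for $(\frA+\frB)^{-1}$ itself, is the ingredient your argument is missing; the subsequent passage from short intervals to $[0,T]$ is a gluing over a number of intervals fixed independently of $\varepsilon$, which is why it is harmless there but not in your scheme.
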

\begin{proof} We proceed in three steps and follow the same idea as in the proof of \cite[Theorem 4.5]{ELLA13}.
	
\par \textit{Step 1.}
By \cite[Lemma 4.1]{ELLA13} there exists a constant $M(A)>0$ and $\rho_1\geq 0$ independent on $t\in[0,T]$  such that

\begin{equation}\label{estimation RM1}\|(\rho+\mathfrak{A}(t)+\frB)^{-1}\|_{\mathcal{L}(L^p(a,b;X),
MR_p(a,b,D,X))}\leq M(A)
\end{equation}
and
\begin{equation}\label{estimation RM2}
\|(\rho+\frA(t)+\frB)^{-1}\|_{\mathcal{L}(L^p(a,b,X))}\leq
\frac{M(A)}{1+\rho},
\end{equation}
 for all $\rho\geq\rho_1$ and all $[a,b]\subset [0,T].$ On the other hand, we have from \cite[Lemma 4.2]{ELLA13} that there exists $\rho_2\geq 0, \delta>0$ and $n_0\in \N$ such that for each $[a,b]\subset [0,T],|b-a|\leq\delta$ implies that
 \begin{equation}\label{3/4estimation} \|(\frA_n-\frA(t))(\rho+\frA(t)+
\frB)^{-1}\|_{\mathcal{L} (L^p(a,b;X))}\leq 3/4,\end{equation}
for all
$t\in[0,T], n\geq n_0$ and all $\rho\geq\rho_2.$ Since $A$ satisfies the assumptions $(H1)-(H4),$  we also have that
\begin{equation}\label{3/4estimation2} \|(\frA-\frA(t))(\rho+\frA(t)+
\frB)^{-1}\|_{\mathcal{L} (L^p(a,b;X))}\leq 3/4,\end{equation}
 for all
$t\in[0,T]$ and all $\rho\geq\rho_2$ provided that $|b-a|\leq\delta.$
\medskip
\par\noindent $\textit{Step 2.}$ Let $\delta>0, \rho_0:=\max\{\rho_1,\rho_2\}\geq 0$ and $n_0\in\N$ be as in the first step and assume that $T\leq\delta.$ Let  $t_0\in [0,T]$ and $\rho>\rho_0$ be fixed.
Let $\varepsilon>0$ and let $k_0\in \mathbb{N}$ be such that
\begin{equation}\label{eq2}
\sum_{k=k_0+1}^{\infty}\|\Big((\frA_n-\frA(t_0))
(\rho+\frA(t_0)+\frB)^{-1}
\Big)^k\|_{\mathcal{L}(L^p(0,T;X)}\leq \frac{\varepsilon}{3M(A)}
\end{equation}
and \begin{equation}\label{eq3}
\sum_{k=k_0+1}^{\infty}\|\Big((\frA-\frA(t_0))
(\rho+\frA(t_0)+\frB)^{-1}
\Big)^k\|_{\mathcal{L}(L^p(0,T;X)}\leq \frac{\varepsilon}{3M(A).}
\end{equation}
For each $k\in \{1,...,k_0\}$ and $n\in \mathbb{N}$ with $n\geq n_0$ we set
\[ I_{k,n}:=\Big((\frA_n-
\frA(t_0))(\rho+\frA(t_0)+\frB)^{-1} \Big)^k
\text{ and  }\ I_{k}:=\Big((\frA-
\frA(t_0))(\rho+\frA(t_0)+\frB)^{-1} \Big)^k.\]
 According to  $(H3), (\ref{estimation RM1})$ and $(\ref{estimation RM2}),$  there exists $n_1\in\N$ and $\eta\geq 0$ such that for each $n\geq N_0:=\max\{n_0,n_1\}$
\begin{align*}
\|I_{1,n}&f-I_1f\|_{L^p(0,T,X)}=\|(\frA_n-\frA)(\rho+\frA(t_0)+\frB)^{-1}f\|_{L^p(0,T,X)}
\\&\leq \frac{\varepsilon^{\prime}}{2 M(A)}\|(\rho+\frA(t_0)+\frB)^{-1}f\|_{MR_p(D,X)}
+\eta\|(\rho+\frA(t_0)+\frB)^{-1}f\|_{L^p(0,T,X)}
\\&\leq \frac{\varepsilon^{\prime}}{2}\|f\|_{L^p(0,T,X)}+\frac{\eta M(A)}{1+\rho}\|f\|_{L^p(0,T,X)}
\end{align*}
 where $\varepsilon':=\frac{4\varepsilon}{9M(A)k_0^2}$. Thus choosing  $\rho\geq \rho_0$ large enough we obtain
\[\|I_{1,n}f-I_1f\|_{L^p(0,T,X)}\leq  \varepsilon^{\prime}\|f\|_{L^p(0,T,X)}\]
for all $n\geq N_0.$ This estimate together with $(\ref{3/4estimation})$ and $(\ref{3/4estimation2}),$ yield
\begin{align*}\|I_{2,n}f-I_2f\|_{L^p(0,T,X)}&=\|I_{1,n}I_{1,n}f-I_1I_1f\|_{L^p(0,T,X)}
\\&\leq \|I_{1,n}(I_{1,n}-I_1)f\|_{L^p(0,T,X)}+\|(I_{1,n}-I_{1})I_1f\|_{L^p(0,T,X)}
\\&\leq \frac{3}{4}\varepsilon^{\prime}\|f\|_{L^p(0,T,X)}+\frac{3}{4}\varepsilon^{\prime}\|f\|_{L^p(0,T,X)}=\frac{3}{2}\varepsilon^{\prime}\|f\|_{L^p(0,T,X)},
\end{align*}
and thus
\begin{equation}\label{eq I_k}\|I_{k,n}f-I_kf\|_{L^p(0,T,X)}\leq \frac{3}{4}k_0\varepsilon^{\prime}\|f\|_{L^p(0,T,X)}=\frac{\varepsilon}{3M(A)}\|f\|_{L^p(0,T,X)}\end{equation}
 holds for all $n\geq N_0$ and every $k=1,2,..,k_0.$ Combining (\ref{eq2}), (\ref{eq3})  and (\ref{eq I_k}) we deduce
\begin{align*}
\|(\frA_n+\frB)^{-1}f-&(\frA+\frB)^{-1}f\|_{MR}
\\&=\|(\rho+\frA(t_0)+\frB)^{-1}\Big(I+(\frA_n-\frA(t_0))(\rho+\frA(t_0)+\frB)^{-1}\Big)^{-1}f
\\&\quad\ \ \ \ -(\rho+\frA(t_0)+\frB)^{-1}\Big(I+(\frA-\frA(t_0))
(\rho+\frA(t_0)+\frB)^{-1}\Big)^{-1}f\|_{MR_p(D,X)}
\\&\leq \|(\rho+\frA(t_0)+\frB)^{-1}\|_{\L(L^p(0,T,X),MR_p(D,X)}\|\sum_{k=1}^\infty(I_{k,n}-I_k)f\|_{L^p(0,T,X)}
\\&\leq M(A)\Big(\frac{\varepsilon}{3M(A)}+\frac{\varepsilon}{3M(A)}+\frac{\varepsilon}{3M(A)}\Big)\|f\|_{L^p(0,T,X)}=\varepsilon\|f\|_{L^p(0,T,X)}
\end{align*}
for all $n\geq N_0.$
\medskip
\par\noindent \textit{Step 2.} Let now $[0,T]$ be an arbitrary closed and bounded interval and set
\[\tau:=\max\{0\leq \tau'\leq T \ \text{ such that } (\ref{uniform estimate for inhomogenuous case}) \text{ holds on } [0,\tau']\}.\]
Then $\tau\geq \delta.$ We show that $\tau=T.$ Assume by contradiction that $\tau<T$ and let $\tau'_0<\tau$ such that $\tau-\tau_0'\leq \delta/2.$ Then (\ref{uniform estimate for inhomogenuous case}) holds if we consider the Cauchy problems (\ref{eqq1}) and (\ref{eqq2})  on $[0,\tau'_0]$ or on $[\tau'_0,(\tau'_0+\delta)\wedge T],$ and thus on $[0,(\tau'_0+\delta)\wedge T]$ by taking into account Remark \ref{remark:restriction+translation} $(i)$. Thus $(\tau'_0+\delta)\wedge T\leq\tau,$ which is a contradiction. This completes the proof.
\end{proof}

The main result of this section is the follwing.
\begin{theorem}\label{Thm: uniform convergence for homogenuous case in Banach space} Let $A: [0,T]\longrightarrow
\mathcal{L}(D,X)$ be strongly measurable and relatively
continuous. Assume that $A(t)\in \mathcal{MR}$ for all
$t\in[0,T]$ and $A_n$ satisfy the hypothesis $(H_1)-(H_4)$. Let
$x\in Tr$ and $f\in L^p(0,T;X).$ Let $u_n,u\in MR_p(D,X)$ be, respectively,  the solution of
\begin{equation}\label{eqq11}
  \dot{u}_n(t)+A_n(t)u_n(t)=f_n(t) \quad{a.e} \quad\text
   on \quad [0,T],\
 \ \ \ \ u_n(0)=x \
\quad \end{equation}
and
\begin{equation}\label{eqq21}
  \dot{u}(t)+A(t)u(t)=f(t) \quad{a.e} \quad\text on \quad [0,T],\
 \ \ \ \ u(0)=x.\
\end{equation}
Then for each $\varepsilon>0$ there exists $n_0\in\N$ and $\tilde\eta>0$ such that for all $n\geq n_0$
\[\|u_n-u\|_{MR}\leq \varepsilon\Big[ \|x\|_{Tr}+ \|f\|_{L^p(0,T;X)}\Big]+\tilde\eta\eta  \|x\|_{Tr}.\]
\end{theorem}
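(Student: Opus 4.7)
The plan is to reduce the problem with arbitrary initial datum $x\in Tr$ to the inhomogeneous case with zero initial datum already treated in Theorem~\ref{Thm: uniform convergence for inhomogenuous case in Banach space}, via a standard lifting argument.

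\emph{Step 1 (Lifting the initial datum).} By definition of the trace norm I can choose $v\in MR_p(D,X)$ with $v(0)=x$ and $\|v\|_{MR}\le 2\|x\|_{Tr}$. Setting $w:=u-v$ and $w_n:=u_n-v$ yields functions in $MR_0(D,X)$ satisfying
\begin{equation*}
(\frA+\frB)w=g,\qquad (\frA_n+\frB)w_n=g_n,
\end{equation*}
with $g:=f-\dot v-A(\cdot)v$ and $g_n:=f-\dot v-A_n(\cdot)v$; both belong to $L^p(0,T;X)$ thanks to $(H_1)$ and the boundedness of $A$. Since $u-u_n=w-w_n$, I decompose
\begin{equation*}
u-u_n=\bigl[(\frA+\frB)^{-1}-(\frA_n+\frB)^{-1}\bigr]g+(\frA_n+\frB)^{-1}(g-g_n).
\end{equation*}

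\emph{Step 2 (Estimating the two summands).} For the first summand, Theorem~\ref{Thm: uniform convergence for inhomogenuous case in Banach space} supplies, for any $\varepsilon_1>0$, an index $n_1\in\N$ such that its $MR$-norm is bounded by $\varepsilon_1\|g\|_{L^p(0,T;X)}$; combined with $\|g\|_{L^p}\le \|f\|_{L^p}+(1+C)\|v\|_{MR}\le \|f\|_{L^p}+2(1+C)\|x\|_{Tr}$, this contributes the pure-$\varepsilon$ part of the required bound. For the second summand, hypothesis $(H_3)$ produces, for any $\varepsilon_2>0$, constants $\eta\ge 0$ and $n_2\in\N$ with
\begin{equation*}
\|g-g_n\|_{L^p(0,T;X)}=\|(A_n(\cdot)-A(\cdot))v\|_{L^p}\le \varepsilon_2\|v\|_{L^p(D)}+\eta\|v\|_{L^p(X)}\le 2\varepsilon_2\|x\|_{Tr}+2\eta c_X\|x\|_{Tr},
\end{equation*}
where $c_X$ is the embedding constant of $D$ into $X$. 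Since $\|(\frA_n+\frB)^{-1}\|$ is uniformly bounded by some $M>0$ for $n$ large (a byproduct of Theorem~\ref{Thm: uniform convergence for inhomogenuous case in Banach space} together with the triangle inequality), the second summand is at most $2M\varepsilon_2\|x\|_{Tr}+2Mc_X\,\eta\,\|x\|_{Tr}$.

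\emph{Step 3 (Balancing the constants).} Given $\varepsilon>0$, I first fix $\varepsilon_2>0$ with $2M\varepsilon_2\le\varepsilon/2$; this determines $\eta$ and $n_2$ through $(H_3)$. Then I fix $\varepsilon_1>0$ with $\varepsilon_1\bigl[1+2(1+C)\bigr]\le\varepsilon/2$, which produces $n_1$ via Theorem~\ref{Thm: uniform convergence for inhomogenuous case in Banach space}. Setting $n_0:=\max\{n_1,n_2\}$ and $\tilde\eta:=2Mc_X$ then delivers the stated inequality.

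The main delicate point is the order of quantifiers: the constant $\eta=\eta(\varepsilon_2)$ coming from $(H_3)$ typically grows as $\varepsilon_2\to 0$, so $\eta$ cannot be absorbed into the arbitrarily small parameter $\varepsilon$. It must be fixed once and for all after $\varepsilon_2$ is chosen; this is precisely why the conclusion retains the additive contribution $\tilde\eta\,\eta\,\|x\|_{Tr}$ which vanishes only when $\eta=0$ (i.e.\ when the approximation property of $(H_3)$ is actually $D$-continuous).
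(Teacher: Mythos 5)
Your proposal is correct and follows essentially the same route as the paper's own proof: lift the initial datum by a function $\vartheta\in MR_p(D,X)$ with $\vartheta(0)=x$ and $\|\vartheta\|_{MR}\le 2\|x\|_{Tr}$, split $u-u_n$ into the term controlled by Theorem \ref{Thm: uniform convergence for inhomogenuous case in Banach space} and the term $(\frA_n+\frB)^{-1}(g-g_n)$ controlled by $(H_3)$ and a uniform bound on $\|(\frA_n+\frB)^{-1}\|$. The only cosmetic difference is that you obtain that uniform bound from Theorem \ref{Thm: uniform convergence for inhomogenuous case in Banach space} plus the triangle inequality, while the paper invokes the uniform boundedness principle; your closing remark on why $\eta$ cannot be absorbed into $\varepsilon$ matches the role of the residual term $\tilde\eta\eta\|x\|_{Tr}$ in the statement.
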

\begin{proof}
Let $\varepsilon>0.$ Choose $\vartheta\in MR_p(D,X)$ such that $\vartheta(0)=x$ and $\|\vartheta\|_{MR}\leq 2\|x\|_{Tr}.$ Set  $g_n:=-\dot{\vartheta}(\cdot)-A_n(\cdot)\vartheta(\cdot)+f(\cdot)$ and
$g:=-\dot{\vartheta}(\cdot)-A(\cdot)\vartheta(\cdot)+f(\cdot)\in L^p(0,T;X).$ Then there exist $v_n,u\in MR_p(D,X)$ such that
 \begin{equation*}
  \dot{v}_n(t)+A_n(t)v_n(t)=g_n(t)\quad{a.e} \quad\text on \quad [0,T],\
 \ \ \ \ v_n(0)=0 \
\end{equation*}
and 
 \begin{equation*}
  \dot{v}(t)+A(t)v(t)=g(t) \quad{a.e} \quad\text on \quad [0,T],\
 \ \ \ \ v(0)=0. \
\end{equation*}
By the uniqueness of solvability, $u_n=v_n+\vartheta$ and $u=v+\vartheta.$ It follows,
\begin{align*}
\|u_n-u\|_{MR}=&\|v_n-v\|_{MR}=\|(\frA_n+\frB)^{-1}g_n-(\frA+\frB)^{-1}g\|_{MR}
\\&\leq \|(\frA_n+\frB)^{-1}(g_n-g)\|_{MR}+\|(\frA_n+\frB)^{-1}g-(\frA+\frB)^{-1}g\|_{MR}
\\&\leq M\|A_n\vartheta-A\vartheta\|_{L^p(0,T;X)}+\|(\frA_n+\frB)^{-1}g-(\frA+\frB)^{-1}g\|_{MR}
\end{align*}
where \[M:=\sup_{n\in \N}\|(\frA_n+\frB)^{-1}\|_{\L(L^p(0,T,X),MR_p(D,X))}\]
which is finite by the uniform boundedness principal.
Next, Theorem \ref{Thm: uniform convergence for inhomogenuous case in Banach space} and condition $(H3)$ imply that there exists $n_0\in\N$ and $\eta\geq 0$ such that
\begin{align*}\|A_n\vartheta-A\vartheta\|_{L^p(0,T;X)}&\leq \frac{\varepsilon}{4M}\|\vartheta\|_{MR}+\eta \|\vartheta\|_{L^p(0,T;X)}
\leq \frac{\varepsilon}{2M}\|x\|_{Tr}+2\eta \|x\|_{Tr}
\end{align*}
and
\begin{align*}\|(\frA_n+\frB)^{-1}g-(\frA+\frB)^{-1}g\|_{MR}\leq & \frac{\varepsilon}{4c+2}\|g\|_{L^p(0,T;X)}
\\&\leq\frac{\varepsilon}{4c+2}(\|\dot \vartheta+A \vartheta\|_{L^p(0,T;X)}+\|f\|_{L^p(0,T;X)})
\\&\leq \frac{\varepsilon}{2}\|x\|_{Tr}+\varepsilon \|f\|_{L^p(0,T;X)}
\end{align*}
for all $n\geq n_0$, where $c=\max\{1,\sup_{t\in[0,T]}\|A(t)\|_{L(D,X)}\}.$ This shows the claims.
\end{proof}
\section{Non-autonomous forms: assumptions and preliminary results \label{Approximation}}

Throughout the following sections $H,V$ are two separable  Hilbert spaces over $\mathbb C$ such that  $V \underset d \hookrightarrow H;$ i.e., $V$ is  densely embedded into $H$ and
\begin{equation*}\label{eq:V_dense_in_H}
    \|u\| \le c_H \|u\| _V \quad (u \in V)
\end{equation*}
for some constant $c_H>0.$ Let $V'$ denote  the antidual of $V.$  The duality
between $V'$ and $V$ is denoted by $\langle ., . \rangle$. As usual, by identifying $H$ with  $H',$ we have $V\hookrightarrow H\cong H'\hookrightarrow V'.$  These embeddings are continuous and
\begin{equation*}\label{eq:H_dense_in_V'}
    \|f\|_{V'} \le c_H \|f\|  \quad (f \in V')
\end{equation*}
see e.g., \cite{Bre11}. We denote by $(\cdot \mid \cdot)_V$ the scalar product and $\|\cdot\|_V$ the norm
on $V$ and by $(\cdot \mid \cdot), \|\cdot\|$ the corresponding quantities in $H.$  Let $
    \fra: [0,T]\times V\times V \to \C$ be a  non-autonomous sesquilinear form satisfying
\begin{equation}\label{eq:continuity-nonaut}
    |\fra(t,u,v)| \le M \Vert u\Vert_V \Vert v\Vert_V \quad (t\in[0,T],u,v\in V)\qquad
\end{equation}
and
\begin{equation}\label{eq:Ellipticity-nonaut}
    \Re ~\fra (t,u,u)\ge \alpha \|u\|^2_V \quad ( t\in [0,T], u\in V)
\end{equation}
for some constants  $\alpha, M> 0$ and $\fra(.,u,v)$ is measurable for all $u,v\in V.$
We assume in addition, that there exists $0\leq \gamma< 1$ and a non-decreasing continuous  function   $\omega:[0,T]\longrightarrow [0,+\infty)$ with
\begin{equation}\label{eq 1:Dini-condition}  \sup_{t\in[0,T]} \frac{\omega(t)}{t^{\gamma/2}}<\infty, \end{equation}
\begin{equation}\label{eq 2:Dini-condition}
\int_0^T\frac{\omega(t)}{t^{1+\gamma/2}} {\rm d}t<\infty
\end{equation}
and
\begin{equation}\label{eq 3:Dini-condition}
    |\fra(t,u,v)-\fra(s,u,v)| \le\omega(|t-s|) \Vert u\Vert_{V} \Vert v\Vert_{V_\gamma}
\end{equation}
for all $t,s\in[0,T]$ and for all $u,v\in V$ where $V_\gamma:=[H,V]_\gamma$ is the complex interpolation space. Note that

\[V\hookrightarrow V_\gamma \hookrightarrow H\hookrightarrow V_\gamma'\hookrightarrow V'\] with continuous  embeddings. By the Lax-Milgram theorem, for each $t\in[0,T]$ there exists an isomorphism $\A(t):V\to V^\prime$ such that
$\langle \A(t) u, v \rangle = \fra(t,u,v)$ for all $u,v \in V.$ It is well known that $-\A(t),$
regarding as unbounded operator with domain $V,$  generates a bounded holomorphic semigroup $e^{-\cdot\A(t)}$  of angle $\theta:=\frac{\pi}{2}-\arctan(\frac{M}{\alpha})$ on $V'$. We call $\A(t)$ the operator associated with $\fra(t,\cdot,\cdot)$ on $V^\prime.$ We have also to consider the operator $A(t)$ associated with $\fra(t,\cdot,\cdot)$ on  $H:$
\begin{align*}
    D(A(t)) := {}& \{ u\in V : \A(t) u \in H \}\\
    A(t) u = {}& \A(t) u.
\end{align*}
Then  $-A(t)$ generates a holomorphic $C_0$-semigroup (of angle $\theta$) $e^{-s A(t)}$ on $H$ which is the restriction to $H$ of $e^{-\cdot A(t)},$ and we have
\begin{equation}\label{analytic representation}
e^{-\cdot A(t)}=\frac{1}{2i\pi}\int_\Gamma e^{\cdot\mu} (\mu+A(t))^{-1}{\rm d}\mu
\end{equation}
where $\Gamma:=\{re^{\pm \varphi}:\ r>0\}$ for some fixed $\varphi\in (\theta,\frac{\pi}{2})$
(see e.g. \cite[Lecture 7]{Ar06},\cite{Ka},\cite[Chapter 1]{Ou05} or \cite[Chap.\ 2]{Tan79}).
\\\par The following proposition is of great interest for this paper.

\begin{proposition}\label{lemma: estimations for general from}\cite[Section 2]{Ar-Mo15} Let $b$ be any sesquilinear form that satisfies assumptions (\ref{eq:continuity-nonaut})-(\ref{eq:Ellipticity-nonaut}) with the same constants $M$ and $\alpha$ and let $\gamma\in [0,1[.$ Let $\B$ and $B$ be the associated operators  on $V'$ and $H,$ respectively. Then there exists a constant $c>0$ which depends only on $M,\alpha, \gamma$ and $c_H$ such that

\begin{enumerate}
\item \label{Eq1: estimation resolvent}$\displaystyle \|(\lambda-\B)^{-1}\|_{\L(V_{\gamma}',H)}\leq \frac{c}{(1+\mid\lambda\mid)^{1-\frac{\gamma}{2}}},$
\item \label{Eq2: estimation resolvent}$\displaystyle \|(\lambda-\B)^{-1}\|_{\L(V)}\leq \frac{c}{1+\mid\lambda\mid},$
\item \label{Eq3: estimation resolvent}$\displaystyle \|(\lambda-\B)^{-1}\|_{\L(H,V)}\leq \frac{c}{(1+\mid\lambda\mid)^{\frac{1}{2}}},$
\item \label{Eq4: estimation resolvent} $\displaystyle \|(\lambda-\B)^{-1}\|_{\L(V',H)}\leq \frac{c}{(1+\mid\lambda\mid)^{\frac{1}{2}}},$
\item\label{Eq6: estimation resolvent} $\displaystyle \|(\lambda-\B)^{-1}\|_{\L(V'_\gamma,V)}\leq \frac{c}{(1+\mid\lambda\mid)^{\frac{1-\gamma}{2}}},$
\item \label{Eq2: estimation  semigroup}
$\displaystyle\|e^{-s\B}\|_{\L(V_\gamma',H)}\leq\frac{c}{s^{\gamma/2}},$
\item \label{Eq3: estimation  semigroup}
$\displaystyle\|e^{-sB}\|_{\L(V_\gamma',V)}\leq\frac{c}{s^{\frac{1+\gamma}{2}}},$
\item \label{Eq4: estimation  semigroup}
$\displaystyle\|e^{-s\B}\|_{\L(V',V)}\leq\frac{c}{s^{\frac{1}{2}}},$
\item \label{analytic estimation}
$\displaystyle\|Be^{-sB}\|_{\L(H)}\leq \frac{c }{s},$
\item \label{analytic estimation in V}
$\displaystyle\|e^{-sB}\|_{\L(V)}\leq c$
\end{enumerate}
for each $t\in[0,T], s\geq 0$ and $\lambda\notin \Sigma_\theta:=\{re^{i\varphi}: r>0, |\varphi|<\theta\}.$
\end{proposition}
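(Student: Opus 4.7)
All ten bounds derive from three ingredients: (i) $-\B$ is sectorial on $V'$ with half-angle $\theta = \pi/2 - \arctan(M/\alpha)$; (ii) the shifted form $b_\lambda(u,v) := b(u,v) - \lambda(u|v)_H$ is coercive on $V$ for $\lambda\notin\Sigma_\theta$, with $\mathrm{dist}(\lambda,\Sigma_\theta)\gtrsim 1+|\lambda|$; and (iii) the identity $V_\gamma' = [V',H]_\gamma$, since $H$ is the pivot between $V$ and $V'$. The plan is first to derive the endpoint resolvent bounds from a single testing argument, then to interpolate in $\gamma$, and finally to pass to the semigroup estimates via the Dunford representation.

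Fix $\lambda\notin\Sigma_\theta$ and set $u := (\lambda-\B)^{-1}f$. Testing $\lambda u - \B u = f$ against $u$ and separating real and imaginary parts of $\lambda\|u\|^2 - b(u,u) = \langle f,u\rangle$, the sector condition on $b$ combined with coerciveness yields simultaneously
\[
(1+|\lambda|)\,\|u\|_H^2 \;\leq\; c\,|\langle f,u\rangle|, \qquad \|u\|_V^2 \;\leq\; c\,|\langle f,u\rangle|,
\]
with $c$ depending only on $M$ and $\alpha$. Pairing these inequalities with Cauchy--Schwarz in the dualities $H$, $V'$--$V$, and $H$--$V$ yields (3), (4), and sectoriality in $H$; estimate (2) follows by applying the same argument to the adjoint form $b^*(u,v):=\overline{b(v,u)}$ (whose operator is $\B^*$) and dualising through the pivot $H$. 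Estimates (1) and (5) then follow from complex interpolation between the endpoint pairs $V'\to H$ (from (4)) and $H\to H$ (sectoriality in $H$), respectively $V'\to V$ (from coerciveness of $b_\lambda$) and $H\to V$ (from (3)); the exponents combine to give precisely $(1+|\lambda|)^{-(1-\gamma/2)}$ and $(1+|\lambda|)^{-(1-\gamma)/2}$. For the semigroup bounds (6)--(10), I substitute each resolvent estimate into
\[
e^{-s\B} \;=\; \frac{1}{2\pi i}\int_\Gamma e^{-s\lambda}(\lambda-\B)^{-1}\,\d\lambda
\]
taken on a Dunford contour $\Gamma$ of angle slightly larger than $\theta$; after the change of variable $\sigma=sr$, the integrals $\int_0^\infty e^{-c\sigma}(s+\sigma)^{-\beta}\,\d\sigma$ with $\beta<1$ converge uniformly and produce the stated power of $s$. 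Estimate (10) is the direct insertion of (2).

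\textbf{Main obstacle.} The principal difficulty is not in deriving the estimates individually but in maintaining uniformity of all constants in $M,\alpha,\gamma$ and $c_H$, with no dependence on the particular form $b$. This uniformity is what will later permit simultaneous application to $\A(t)$, to the averages $\A_k$, and to the piecewise-linear interpolants $\A_\Lambda(t)$ appearing in Section \ref{Approximation}. The bookkeeping is most delicate for estimate (2) on $V$, where the duality argument through $b^*$ must be carried out with uniform constants, and at the interpolation endpoints, where the constants of complex interpolation for the Hilbert scale $(V',H,V)$ must be controlled independently of $b$.
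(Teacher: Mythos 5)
The paper does not actually prove this proposition --- it is quoted verbatim from \cite[Section 2]{Ar-Mo15} --- and your outline follows the same standard route used in that reference: the numerical-range/coercivity testing argument for the endpoint resolvent bounds (2)--(4), complex interpolation through $V_\gamma'=[H,V']_\gamma$ for (1) and (5), and contour integration for the semigroup bounds. Up to and including item (9) the plan is sound, granted the standard (and, as you note, uniformity-sensitive) identification of the adjoint of the resolvent with the resolvent of the operator associated with $b^*$.

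There is, however, a genuine gap at item (10), and it is flagged by your own convergence criterion. You state that the Dunford integrals converge and yield the right power of $s$ when the resolvent decays like $(1+|\lambda|)^{-\beta}$ with $\beta<1$, and then obtain (10) by ``direct insertion of (2)'' --- but (2) is exactly the case $\beta=1$. On a fixed contour $\Gamma=\{re^{\pm i\varphi}: r>0\}$ the insertion only gives
\[
\|e^{-s\B}\|_{\L(V)}\;\le\; c\int_0^\infty e^{-sr|\cos\varphi|}\,\frac{\d r}{1+r},
\]
which behaves like $c\log(1/s)$ as $s\to0$ and is therefore not uniformly bounded. To prove (10) one must exploit cancellation: either deform to the $s$-dependent contour consisting of the two rays $\{re^{\pm i\varphi}: r\ge 1/s\}$ joined by the arc of radius $1/s$ (on which the integral of $|e^{-s\lambda}|(1+|\lambda|)^{-1}$ is bounded uniformly in $s$), or equivalently invoke the abstract fact that the sectorial estimate $\|\lambda(\lambda-\B)^{-1}\|_{\L(V)}\le c$ generates a \emph{bounded} holomorphic semigroup on $V$, or argue by duality from the uniform boundedness of $e^{-s\B^{*}}$ on $V'$. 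A second, smaller discrepancy: for item (8) the direct insertion of the $\L(V',V)$ resolvent bound (the case $\beta=0$) produces $s^{-1}$, not the $s^{-1/2}$ printed in the statement; $s^{-1}$ is what parabolic scaling and \cite{Ar-Mo15} give, so the printed exponent is presumably a misprint, but in any case your claim that the insertion ``produces the stated power'' fails for this item as written, and you should say which bound you are actually proving.
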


\par Let $\Lambda=(0=\lambda_0<\lambda_1<...<\lambda_{n+1}=T)$ be
a uniform subdivision of $[0,T],$ i.e., \[|\Lambda|:=\sup_l|\lambda_{l+1}-\lambda_l|=|\lambda_{k+1}-\lambda_k| \  \text{ for each } \ k=0,1,...,n.\] Consider a family of sesquilinear  forms $\fra_k:V \times V \to \C$  given by  
\begin{equation}\label{eq:form-moyen integrale}
 \fra_k(u,v):=\frac{1}{\lambda_{k+1}-\lambda_k}
\int_{\lambda_k}^{\lambda_{k+1}}\fra(r;u,v){\rm  d}r,\quad  u,v\in V \ \
\end{equation}
for each $k=0,1,...,n.$ Remark that $\fra_k$ satisfies (\ref{eq:continuity-nonaut}) and (\ref{eq:Ellipticity-nonaut}) for all $k=0,1,...n.$ The associated operators are denoted by $\A_k\in \L(V,V')$ and are given by
\begin{equation}\label{eq:op-moyen integrale}
 \A_ku :=\frac{1}{\lambda_{k+1}-\lambda_k}
\int_{\lambda_k}^{\lambda_{k+1}}\A(r)u{\rm  d}r,\quad u\in V, \ k=0,1,...,n.\ \  \end{equation}
The function $\fra_\Lambda:[0,T]\times V \times V \to \C$ defined for $t\in [\lambda_k,\lambda_{k+1}]$ by
\begin{equation}\label{form: approximation formula2}
 \fra_{\Lambda}(t;u,v):=\frac{\lambda_{k+1}-t}{\lambda_{k+1}-\lambda_k}\fra_{k}(u,v)+\frac{t-\lambda_k}{\lambda_{k+1}-\lambda_k}\fra_{k+1}(u,v), \ \ u,v\in V,
\end{equation}
 is a non-autonomous sesquilinear form which  satisfies   (\ref{eq:continuity-nonaut})-(\ref{eq:Ellipticity-nonaut}) with the same constants $\alpha$ and $M.$  The associated time dependent operator is denoted by
  \begin{equation}\label{Operator: approximation formula1}\A_\Lambda(.):[0,T]\to \L(V,V')
\end{equation}
 and is given for $t\in [\lambda_k,\lambda_{k+1}]$ by
\begin{equation}\label{Operator: approximation formula2}\
 \A_{\Lambda}(t):=\frac{\lambda_{k+1}-t}{\lambda_{k+1}-\lambda_k}\A_{k}+\frac{t-\lambda_k}{\lambda_{k+1}-\lambda_k}\A_{k+1}.
\end{equation}
Then  $\A_\Lambda$ converges
strongly and almost everywhere to $\A$ and also on $\L(L^2(0,T,V),L^2(0,T,V'))$  as $\vert \Lambda\vert
\rightarrow 0$ \cite[Lemma 2.1]{ELLA15}.
\begin{remark}\label{Remark: estimations for general from} All estimates in Proposition \ref{lemma: estimations for general from} holds for $\A_\Lambda(t)$ with constant independent of $\Lambda$ and $t\in[0,T],$ since $\fra_\Lambda$ satisfies (\ref{eq:continuity-nonaut})-(\ref{eq:Ellipticity-nonaut}) with the same constants $M$ and $\alpha,$ also $\gamma$ and $c_H$  does not depend on $\Lambda$ and $t\in[0,T].$
\end{remark}
\noindent Recall that a coercive and bounded form $b:V\times V\to \C$ associated with the operator $B$ on $H$ has the Kato square root property if
\begin{equation}\label{squar root properties} D(B^{1/2})=V.\end{equation}
We prove below that $\fra_\Lambda(t,\cdot,\cdot)$ has the square root property for all $t\in[0,T]$ if $\fra_\Lambda(0;\cdot,\cdot)$ has it.  This is essentially based on the abstract result  due to Arendt and Monniaux \cite[Proposition 2.5]{Ar-Mo15}. They proved that for two sesquilinear forms $\fra_1,\fra_2:V\times V\to\C$ which satisfies (\ref{eq:continuity-nonaut})-(\ref{eq:Ellipticity-nonaut}), the form $\fra_1$ has the square root property if and only if $\fra_2$ has it provided that
\[|\fra_1(u,v)-\fra_2(u,v)|\leq c \|u\|_V\|v\|_{V_\gamma} \ u,v\in V\]
for some constant $c>0.$

\begin{proposition}\label{square root property for the Linear-approximation} Assume $\fra(0,.,.)$ has the square root property. Then $\fra_\Lambda(t,.,.)$ has the square root properties for all $t\in [0,T],$ too.
\end{proposition}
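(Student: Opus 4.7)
The approach will be an iterated application of the comparison principle stated just above the proposition, namely \cite[Proposition 2.5]{Ar-Mo15}: if two forms on $V\times V$ satisfying the standing assumptions differ by something controlled by $\|u\|_V\|v\|_{V_\gamma}$, then they share the square root property.

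First I would show that the square root property is actually enjoyed by $\fra(t,\cdot,\cdot)$ for \emph{every} $t\in[0,T]$, not just for $t=0$. This is immediate: the H\"older-type condition (\ref{eq 3:Dini-condition}) with $s=0$ gives
\[
|\fra(t,u,v)-\fra(0,u,v)|\le \omega(t)\|u\|_V\|v\|_{V_\gamma},
\]
so the Arendt--Monniaux comparison applies and propagates the square root property from $\fra(0,\cdot,\cdot)$ to $\fra(t,\cdot,\cdot)$.

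Next I would pass from the pointwise forms $\fra(t,\cdot,\cdot)$ to the averages $\fra_k$. Subtracting $\fra(\lambda_k,\cdot,\cdot)$ inside the integral defining $\fra_k$ and using (\ref{eq 3:Dini-condition}) one gets
\[
|\fra_k(u,v)-\fra(\lambda_k,u,v)|\le\frac{1}{\lambda_{k+1}-\lambda_k}\int_{\lambda_k}^{\lambda_{k+1}}\omega(r-\lambda_k)\,{\rm d}r\;\|u\|_V\|v\|_{V_\gamma}\le\omega(|\Lambda|)\|u\|_V\|v\|_{V_\gamma},
\]
where at the last step I use that $\omega$ is non-decreasing. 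Since $\fra(\lambda_k,\cdot,\cdot)$ has the square root property by the first paragraph, another invocation of \cite[Proposition 2.5]{Ar-Mo15} yields the square root property for each $\fra_k$, $k=0,\dots,n$.

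Finally, for $t\in[\lambda_k,\lambda_{k+1}]$ the form $\fra_\Lambda(t,\cdot,\cdot)$ is a convex combination of $\fra_k$ and $\fra_{k+1}$, so
\[
\fra_\Lambda(t,u,v)-\fra_k(u,v)=\frac{t-\lambda_k}{\lambda_{k+1}-\lambda_k}\bigl(\fra_{k+1}(u,v)-\fra_k(u,v)\bigr).
\]
Inserting $\pm\fra(\lambda_{k+1},u,v)$ and $\pm\fra(\lambda_k,u,v)$ between $\fra_{k+1}$ and $\fra_k$ and applying the previous step together with (\ref{eq 3:Dini-condition}) gives
\[
|\fra_{k+1}(u,v)-\fra_k(u,v)|\le 3\,\omega(|\Lambda|)\|u\|_V\|v\|_{V_\gamma},
\]
hence $|\fra_\Lambda(t,u,v)-\fra_k(u,v)|\le 3\,\omega(|\Lambda|)\|u\|_V\|v\|_{V_\gamma}$. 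A last application of the Arendt--Monniaux comparison, using that $\fra_k$ has the square root property, concludes that $\fra_\Lambda(t,\cdot,\cdot)$ does as well for every $t\in[0,T]$. No step is really hard; the only care needed is to keep track of the chain of comparisons so that every form that appears as a reference point is already known to enjoy the square root property before being used.
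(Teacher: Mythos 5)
Your proof is correct and rests on the same single ingredient as the paper's, namely the comparison principle of Arendt--Monniaux \cite[Proposition 2.5]{Ar-Mo15} combined with the modulus-of-continuity bound (\ref{eq 3:Dini-condition}). The only difference is organizational: the paper compares $\fra_\Lambda(t,\cdot,\cdot)$ with $\fra(0,\cdot,\cdot)$ directly in one step, bounding the difference by $2\sup_{t\in[0,T]}\omega(t)\,\Vert u\Vert_V\Vert v\Vert_{V_\gamma}$, whereas you run a chain of three comparisons ($\fra(0)\to\fra(t)\to\fra_k\to\fra_\Lambda(t)$); both are equally valid since the comparison principle only requires \emph{some} finite constant in front of $\Vert u\Vert_V\Vert v\Vert_{V_\gamma}$.
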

\begin{proof} Let $t\in [0,T]$ and let $k\in\{0,1,\cdots,n\}$ be such that $t\in [\lambda_k,\lambda_{k+1}].$ Then Then assumption (\ref{eq 3:Dini-condition}) implies that
\begin{align*}\mid \fra_\Lambda(t,u,v)-&\fra(0,u,v)\mid\leq\frac{1}{\lambda_{k+1}-\lambda_k}
\int_{\lambda_k}^{\lambda_{k+1}}\mid\fra(r;u,v)-\fra(0,u,v)\mid{\rm  d}r
\\&\qquad \qquad \ \ \ +\frac{1}{\lambda_{k+2}-\lambda_{k+1}}
\int_{\lambda_{k+1}}^{\lambda_{k+2}}\mid\fra(r;u,v)-\fra(0,u,v)\mid{\rm  d}r
\\&\leq \frac{1}{\lambda_{k+1}-\lambda_k}
\int_{\lambda_k}^{\lambda_{k+1}}\omega(r)\|u\|_V\|v\|_{V_\gamma}{\rm  d}r
+\frac{1}{\lambda_{k+2}-\lambda_{k+1}}
\int_{\lambda_{k+1}}^{\lambda_{k+2}}\omega(r)\|u\|_V\|v\|_{V_\gamma}{\rm  d}r
\\&\leq 2\sup_{t\in[0,T]}\omega(t)\|u\|_V\|v\|_{V_\gamma}.
\end{align*}
Now the claim follows from \cite[Proposition 2.5]{Ar-Mo15}.\end{proof}
\noindent
The following results will play an important role latter in the study of the convergence. We first prove that $\fra_\Lambda$ has also a modulus of continuity of the same art as for $\fra.$ In what follows we extend $\omega$ to $[0,2T]$ by setting $\omega(t)=\omega(T)$ for $T\leq t\leq 2T.$
\begin{proposition}\label{Prop: Dini condition for Linear-approximation}
For all $u,v\in V,\ t,s\in [0,T]$
\begin{equation}\label{eq:Dini-condition for Linear-approximation}
    |\fra_\Lambda(t,u,v)-\fra_\Lambda(s,u,v)| \le\omega_\Lambda(|t-s|) \Vert u\Vert_{V} \Vert v\Vert_{V_\gamma}
\end{equation}
where $\omega_\Lambda:[0,T]\longrightarrow [0,+\infty[$ is defined by
\[ \omega_\Lambda
(t):=\left\{%
\begin{array}{ll}
    \frac{t}{|\Lambda|}\omega(4|\Lambda|)& \hbox{for } 0\leq t\leq 2|\Lambda|,\\
    2\omega(2t) & \hbox{ for } 2|\Lambda|<t\leq T. \\
\end{array}%
\right. \]
Moreover, \begin{equation}\label{eq 2:Dini-condition for Linear-approximation}
\int_0^T\frac{\omega_\Lambda(s)}{s^{1+\gamma/2}}{\rm  d}s\leq \frac{4}{1-\frac{\gamma}{2}}\sup_{t\in[0,T]} \frac{\omega(t)}{t^{\gamma/2}}+2^{\gamma/2}\int_{0}^{2T}\frac{\omega(s)}{s^{1+\gamma/2}}{\rm  d}s<\infty,
\end{equation}
and
 \begin{equation}\label{eq 3:Dini-condition for Linear-approximation}
\sup_{t\in[0,T]}\frac{\omega_\Lambda(t)}{t^{\gamma/2}}\leq 2^{1+\gamma/2}\sup_{t\in[0,T]}\frac{\omega(t)}{t^{\gamma/2}}<\infty.
\end{equation}
\end{proposition}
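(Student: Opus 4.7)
The plan is to establish the modulus estimate (\ref{eq:Dini-condition for Linear-approximation}) by splitting on $|t-s| \lessgtr 2|\Lambda|$, and then verify (\ref{eq 2:Dini-condition for Linear-approximation}) and (\ref{eq 3:Dini-condition for Linear-approximation}) by direct computation.

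For $|t-s| \le 2|\Lambda|$ the decisive identity uses that $\Lambda$ is \emph{uniform}: the change of variable $r \mapsto r + |\Lambda|$ in (\ref{eq:form-moyen integrale}) gives
\begin{equation*}
  \fra_{k+1}(u,v) - \fra_k(u,v) = \frac{1}{|\Lambda|}\int_{\lambda_k}^{\lambda_{k+1}}\bigl[\fra(r+|\Lambda|,u,v) - \fra(r,u,v)\bigr]\,{\rm d}r,
\end{equation*}
so (\ref{eq 3:Dini-condition}) yields $|\fra_{k+1}(u,v) - \fra_k(u,v)| \le \omega(|\Lambda|)\|u\|_V \|v\|_{V_\gamma}$. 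Since $\fra_\Lambda(\cdot, u, v)$ is continuous and piecewise linear on the grid with these slopes, summing across the subintervals lying between $s$ and $t$ yields the uniform bound
\begin{equation*}
  |\fra_\Lambda(t, u, v) - \fra_\Lambda(s, u, v)| \le \frac{|t-s|}{|\Lambda|}\omega(|\Lambda|)\|u\|_V \|v\|_{V_\gamma} \qquad (t, s \in [0,T]),
\end{equation*}
which matches $\omega_\Lambda(|t-s|)\|u\|_V\|v\|_{V_\gamma}$ on $[0, 2|\Lambda|]$ after replacing $\omega(|\Lambda|)$ by the larger $\omega(4|\Lambda|)$.

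For $|t-s| > 2|\Lambda|$ the slope bound is too weak, and I route through $\fra$ itself. For $t \in [\lambda_k, \lambda_{k+1}]$ the interpolant $\fra_\Lambda(t, u, v)$ is a convex combination of averages of $\fra(r, u, v)$ over $r \in [\lambda_k, \lambda_{k+2}]$; every such $r$ satisfies $|r-t| \le 2|\Lambda|$, so (\ref{eq 3:Dini-condition}) gives $|\fra_\Lambda(t, u, v) - \fra(t, u, v)| \le \omega(2|\Lambda|)\|u\|_V \|v\|_{V_\gamma}$. Applying this at both $t$ and $s$ and inserting (\ref{eq 3:Dini-condition}) for the $\fra$--$\fra$ difference yields
\begin{equation*}
  |\fra_\Lambda(t, u, v) - \fra_\Lambda(s, u, v)| \le \bigl[2\omega(2|\Lambda|) + \omega(|t-s|)\bigr]\|u\|_V \|v\|_{V_\gamma},
\end{equation*}
and since $2|\Lambda| < |t-s|$, monotonicity of $\omega$ absorbs the bracket into a constant multiple of $\omega(2|t-s|)$, matching the second branch of $\omega_\Lambda$.

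The integral and sup estimates then follow by direct calculation after splitting at $s = 2|\Lambda|$. On $(0, 2|\Lambda|]$ the integrand is $s^{-\gamma/2}\omega(4|\Lambda|)/|\Lambda|$, which integrates to a multiple of $\omega(4|\Lambda|)/(4|\Lambda|)^{\gamma/2}$ and is controlled by (\ref{eq 1:Dini-condition}). On $[2|\Lambda|, T]$ the substitution $u = 2s$ converts $\int \omega(2s)/s^{1+\gamma/2}\,{\rm d}s$ into a multiple of $\int \omega(u)/u^{1+\gamma/2}\,{\rm d}u$, bounded via (\ref{eq 2:Dini-condition}) on the extension of $\omega$ to $[0, 2T]$. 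The sup estimate is analogous: on $(0, 2|\Lambda|]$ the ratio $\omega_\Lambda(t)/t^{\gamma/2} = t^{1-\gamma/2}\omega(4|\Lambda|)/|\Lambda|$ is increasing and maximized at $t = 2|\Lambda|$, reducing again to (\ref{eq 1:Dini-condition}); on $(2|\Lambda|, T]$ it equals a fixed multiple of $\omega(2t)/(2t)^{\gamma/2}$. The main bookkeeping obstacle is matching the exact multiplicative constants advertised in the proposition; the only genuinely non-formal ingredient is the telescoping identity above, which hinges on the grid being uniform.
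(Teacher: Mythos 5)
Your proposal is correct in substance and, for the large-gap case, takes a genuinely different route from the paper. For $|t-s|\le 2|\Lambda|$ you use the same telescoping identity $\fra_{k+1}(u,v)-\fra_k(u,v)=\frac{1}{|\Lambda|}\int_{\lambda_k}^{\lambda_{k+1}}[\fra(r+|\Lambda|,u,v)-\fra(r,u,v)]\,\mathrm{d}r$ that drives the paper's Cases 1--2, but you package it as a global Lipschitz bound $\frac{|t-s|}{|\Lambda|}\omega(|\Lambda|)\|u\|_V\|v\|_{V_\gamma}$ for the piecewise linear interpolant; this is cleaner and in fact also covers the configuration where $s,t$ with $|t-s|\le 2|\Lambda|$ lie in non-adjacent subintervals, which the paper's three-case split treats less transparently. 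For $|t-s|>2|\Lambda|$ the paper expands $\fra_\Lambda(t,u,v)-\fra_\Lambda(s,u,v)$ directly in terms of $\fra_l-\fra_k$ and $\fra_{l+1}-\fra_{k+1}$ plus boundary corrections, landing exactly on $2\omega(2|t-s|)$; you instead go through the pointwise comparison $|\fra_\Lambda(t,u,v)-\fra(t,u,v)|\le\omega(2|\Lambda|)\|u\|_V\|v\|_{V_\gamma}$ (a bound the paper only records later, at the operator level, in Lemma \ref{Lemma:Dini condition approximation operators}) and the triangle inequality through $\fra$. That buys simplicity, but it yields $2\omega(2|\Lambda|)+\omega(|t-s|)\le 3\omega(2|t-s|)$, and since $\omega$ is merely non-decreasing you cannot absorb the factor $3$ into the stated $2\omega(2t)$ (take $\omega$ constant). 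So, strictly, your argument proves the proposition with $3\omega(2t)$ in the second branch of $\omega_\Lambda$. This is harmless everywhere downstream --- only the integrability of $\omega_\Lambda(s)s^{-1-\gamma/2}$, the sup bound, and unspecified constants $\mathbf{c}$ are ever used --- but to obtain the constant $2$ as stated you would need the paper's direct expansion. Your verification of the integral and sup estimates by splitting at $s=2|\Lambda|$ and substituting $u=2s$ is the same computation as the paper's.
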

\begin{proof} Let $u,v\in V$ and $t,s\in [0,T].$  For the proof of (\ref{eq:Dini-condition for Linear-approximation}) we distinguish three cases
\par\noindent\textit{Case 1:} If $\lambda_k\leq s<t \leq\lambda_{k+1}$ for some fixed $k\in\{0,1,\cdots,n\}.$ Then we obtain, using (\ref{eq 3:Dini-condition}) and the fact that $\omega$ is non-decreasing, that
\begin{align*}
|\fra_\Lambda(t,u,v)-\fra_\Lambda(s,u,v)|
&=\Big\vert\frac{\lambda_{k+1}-t}{\lambda_{k+1}-\lambda_k}\fra_{k}(u,v)+\frac{t-\lambda_k}{\lambda_{k+1}-\lambda_k}\fra_{k+1}(u,v)
\\&\qquad\qquad\qquad\ \ \ \ -\frac{\lambda_{k+1}-s}{\lambda_{k+1}-\lambda_k}\fra_{k}(u,v)-\frac{s-\lambda_k}{\lambda_{k+1}-\lambda_k}\fra_{k+1}(u,v)\Big\vert
\\&=\frac{(t-s)}{|\Lambda|}\Big\vert\fra_{k}(u,v)-\fra_{k+1}(u,v)\Big\vert
\\&\leq \frac{(t-s)}{|\Lambda|}\frac{1}{|\Lambda|}\int_{0}^{|\Lambda|}\mid a(r+\lambda_{k},u,v)-a(r+\lambda_{k+1},u,v)\mid{\rm  d}r
\\&\leq \frac{(t-s)}{|\Lambda|}\frac{1}{|\Lambda|}\int_{0}^{|\Lambda|}\omega(\lambda_{k+1}-\lambda_{k})\Vert u\Vert_{V} \Vert v\Vert_{V_\gamma}{\rm  d}r
=\frac{(t-s)}{|\Lambda|}\omega(|\Lambda|)\Vert u\Vert_{V} \Vert v\Vert_{V_\gamma}
\end{align*}
\par\noindent \textit{Case 2:} If $\lambda_k\leq s\leq \lambda_{k+1} \leq t\leq\lambda_{k+2},$  then we deduce from \textit{Step 1} that
\begin{align*}
|\fra_\Lambda(t,u,v)-\fra_\Lambda(s,u,v)|&\leq
|\fra_\Lambda(t,u,v)-\fra_\Lambda(\lambda_{k+1},u,v)|+|\fra_\Lambda(\lambda_{k+1},u,v)-\fra_\Lambda(s,u,v)|
\\&\leq \frac{t-\lambda_{k+1}}{|\Lambda|}\omega(|\Lambda|)\Vert u\Vert_{V} \Vert v\Vert_{V_\gamma}+
\frac{\lambda_{k+1}-s}{|\Lambda|}\omega(|\Lambda|)\Vert u\Vert_{V} \Vert v\Vert_{V_\gamma}
\\&=\frac{t-s}{|\Lambda|}\omega(|\Lambda|)\Vert u\Vert_{V} \Vert v\Vert_{V_\gamma}.
\end{align*}
\textit{Case 3:}
 If now $\lambda_k\leq s\leq \lambda_{k+1}<\cdots<\lambda_{l}\leq t\leq\lambda_{l+1}.$ Then $\lambda_l-\lambda_{k+1}\leq t-s\leq \lambda_{l+1}-\lambda_{k}$ and thus
\begin{equation}\label{Eq1 Proof Proposition: Dini-condition for Linear-approximation}|t-s +\lambda_{k+1}-\lambda_{l+1}|\leq |\Lambda|.
\end{equation} It follows that
\begin{align*}
\fra_\Lambda&(t,u,v)-\fra_\Lambda(s,u,v)\\&=
\frac{\lambda_{l+1}-t}{\lambda_{l+1}-\lambda_l}\fra_{l}(u,v)+\frac{t-\lambda_l}{\lambda_{l+1}-\lambda_l}\fra_{l+1}(u,v)
-\frac{\lambda_{k+1}-s}{\lambda_{k+1}-\lambda_l}\fra_{k}(u,v)-\frac{s-\lambda_k}{\lambda_{k+1}-\lambda_k}\fra_{k+1}(u,v)
\\&=\frac{\lambda_{l+1}-t}{|\Lambda|}[\fra_l(u,v)-\fra_k(u.v)]+\frac{t-\lambda_{l}}{|\Lambda|}[\fra_{l+1}(u,v)-\fra_{k+1}(u.v)]
\\&\qquad+\frac{\lambda_{l+1}-\lambda_{k+1}+s-t}{|\Lambda|}\fra_k(u,v)+\frac{\lambda_{k}-\lambda_{l}+t-s}{|\Lambda|}\fra_{k+1}(u,v)
\end{align*}
Because of (\ref{Eq1 Proof Proposition: Dini-condition for Linear-approximation}) and since $\lambda_k-\lambda_l=\lambda_{k+1}-\lambda_{l+1},$ we deduce that
\begin{align*}
\mid\fra_\Lambda(t,u,v)-\fra_\Lambda(s,u,v)\mid&\leq \frac{\lambda_{k+1}-t}{|\Lambda|}\omega(\lambda_l-\lambda_k)+\frac{t-\lambda_{k}}{|\Lambda|}\omega(\lambda_{l+1}-\lambda_{k+1})
\\&\qquad\qquad \ \ \ \ \ \ \ \ +\frac{\mid t-s+\lambda_{l+1}-\lambda_{k+1}\mid}{|\Lambda|}\omega(\lambda_{l+1}-\lambda_{l})
\\&\leq\omega(\lambda_l-\lambda_k)+\omega(\lambda_{l+1}-\lambda_l)
\\&\leq 2\omega(2(t-s)).
\end{align*}
This completes the proof of (\ref{eq:Dini-condition for Linear-approximation}). 
\newline Let now  prove (\ref{eq 2:Dini-condition for Linear-approximation}). By construction we have
\begin{align*}\int_0^T\frac{\omega_\Lambda(t)}{t^{1+\gamma/2}}{\rm  d}t&
=\int_0^{2|\Lambda|}\frac{\omega(4|\Lambda|)}{|\Lambda|}t^{-\gamma/2}{\rm  d}t+\int_{2|\Lambda|}^T\frac{\omega(2t)}{t^{1+\gamma/2}}{\rm  d}t
\\&\leq\frac{2^{\frac{\gamma}{2}+1}}{1-\frac{\gamma}{2}}\frac{\omega(4|\Lambda|)}{(4|\Lambda|)^{\gamma/2}}+2^{\gamma/2}\int_{0}^{2T}\frac{\omega(t)}{t^{1+\gamma/2}}{\rm  d}t
\\&\leq \frac{4}{1-\frac{\gamma}{2}}\sup_{t\in[0,T]} \frac{\omega(t)}{t^{\gamma/2}}+2^{\gamma/2}\int_{0}^{2T}\frac{\omega(t)}{t^{1+\gamma/2}}{\rm  d}t
\end{align*}
which is finite by (\ref{eq 1:Dini-condition}). The inequality (\ref{eq 3:Dini-condition for Linear-approximation}) is easy to prove.
\end{proof}
Note that condition (\ref{eq 3:Dini-condition}) implies that $\A(t)-\A(s)\in \L(V,V_\gamma')$ for each $t,s\in[0,T]$  and
\begin{equation}\label{Eq0: Dini condition operators}
\|\A(t)-\A(s)\|_{\L(V,V_\gamma')}\leq \omega(|t-s|).
\end{equation}
According to Proposition \ref{Prop: Dini condition for Linear-approximation}, similar estimates hold for $\A_\Lambda(\cdot):$
\begin{lemma}\label{Lemma:Dini condition approximation operators} For each  $t,s\in [0,T]$ we have  $\A_\Lambda(t)-\A_\Lambda(s)\in \L(V,V_\gamma'),$
\begin{equation}\label{Eq: Dini condition operators}
\|\A_\Lambda(t)-\A_\Lambda(s)\|_{\L(V,V_\gamma')}\leq \omega_\Lambda(|t-s|)
\end{equation}
and \begin{equation}\label{Eq2:Dini condition operators}
\|\A_\Lambda(t)-\A(t)\|_{\L(V,V_\gamma')}\leq 2\omega(2|\Lambda|).
\end{equation}

\end{lemma}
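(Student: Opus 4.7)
\medskip

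\noindent\textbf{Proof plan.} The first two assertions are essentially a translation of Proposition~\ref{Prop: Dini condition for Linear-approximation} into operator language. For $u,v\in V$ we have the identity
\[
\langle (\A_\Lambda(t)-\A_\Lambda(s))u,v\rangle=\fra_\Lambda(t,u,v)-\fra_\Lambda(s,u,v),
\]
and the right-hand side is bounded by $\omega_\Lambda(|t-s|)\|u\|_V\|v\|_{V_\gamma}$. Since $V$ is dense in $V_\gamma$ (this is standard for the complex interpolation space $[H,V]_\gamma$), the antilinear functional $v\mapsto \langle(\A_\Lambda(t)-\A_\Lambda(s))u,v\rangle$ extends uniquely to a continuous antilinear functional on $V_\gamma$ with norm at most $\omega_\Lambda(|t-s|)\|u\|_V$. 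In other words $(\A_\Lambda(t)-\A_\Lambda(s))u\in V_\gamma'$ with the desired norm bound, which gives both (\ref{Eq: Dini condition operators}) and the membership claim.

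\medskip

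\noindent For (\ref{Eq2:Dini condition operators}), I fix $t\in[\lambda_k,\lambda_{k+1}]$ and start from the convex combination defining $\A_\Lambda(t)$, rewriting
\[
\A_\Lambda(t)-\A(t)=\frac{\lambda_{k+1}-t}{|\Lambda|}\bigl(\A_k-\A(t)\bigr)+\frac{t-\lambda_k}{|\Lambda|}\bigl(\A_{k+1}-\A(t)\bigr).
\]
Using the definition (\ref{eq:op-moyen integrale}) of $\A_k$ and $\A_{k+1}$, each difference is an average of $\A(r)-\A(t)$ over a subinterval:
\[
\A_k-\A(t)=\frac{1}{|\Lambda|}\int_{\lambda_k}^{\lambda_{k+1}}(\A(r)-\A(t))\,\d r,\qquad
\A_{k+1}-\A(t)=\frac{1}{|\Lambda|}\int_{\lambda_{k+1}}^{\lambda_{k+2}}(\A(r)-\A(t))\,\d r.
\]
I then apply the pointwise bound (\ref{Eq0: Dini condition operators}) inside each integral, observing that for $r$ in the first interval $|r-t|\leq|\Lambda|$ and for $r$ in the second interval $|r-t|\leq 2|\Lambda|$; since $\omega$ is non-decreasing, both integrands are controlled by $\omega(2|\Lambda|)\|u\|_V$ in the $V_\gamma'$-norm. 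Taking norms under the integrals (the integrands are Bochner integrable into $V_\gamma'$ since $V,H$ are separable) and combining via the triangle inequality yields
\[
\|\A_\Lambda(t)-\A(t)\|_{\L(V,V_\gamma')}\le \omega(|\Lambda|)+\omega(2|\Lambda|)\le 2\omega(2|\Lambda|).
\]

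\medskip

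\noindent I do not expect any serious obstacle here: the only non-cosmetic point is the passage from a bound against $\|v\|_{V_\gamma}$ for $v\in V$ to the $V_\gamma'$-norm of the functional, which is a routine density argument. The estimate (\ref{Eq2:Dini condition operators}) is purely a triangle-inequality computation once one has unpacked $\A_\Lambda$ into its defining convex combination of two averages, and it is where the factor $2$ (and the argument $2|\Lambda|$) naturally arise from the two adjacent subintervals on which the averages are taken.
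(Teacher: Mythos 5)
Your proof is correct and follows essentially the same route as the paper: the first estimate is read off from Proposition \ref{Prop: Dini condition for Linear-approximation} (the density of $V$ in $V_\gamma$ being the implicit step the paper also uses to pass from the form bound to the operator bound in $\L(V,V_\gamma')$), and the second uses the identical decomposition of $\A_\Lambda(t)-\A(t)$ into the two averaged differences over $[\lambda_k,\lambda_{k+1}]$ and $[\lambda_{k+1},\lambda_{k+2}]$, yielding $\omega(|\Lambda|)+\omega(2|\Lambda|)\le 2\omega(2|\Lambda|)$ exactly as in the paper.
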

\begin{proof}
The estimate (\ref{Eq: Dini condition operators}) follows from (\ref{eq:Dini-condition for Linear-approximation}).
For the second statement, let $t\in[0,T]$ and let $k\in\{0,1,\cdots,n\}$ be such that $t\in[\lambda_k,\lambda_{k+1}].$ Then
\begin{align*}
\A_\Lambda(t)-\A(t)&=\frac{\lambda_{k+1}-t}{\lambda_{k+1}-\lambda_k}[\A_k-\A(t)]+
\frac{t-\lambda_{k}}{\lambda_{k+1}-\lambda_k}[\A_{k+1}-\A(t)]
\\&=\frac{\lambda_{k+1}-t}{(\lambda_{k+1}-\lambda_k)^2}\int_{\lambda_k}^{\lambda_{k+1}}[\A(r)-\A(t)]{\rm d}r+
\frac{t-\lambda_{k}}{(\lambda_{k+1}-\lambda_k)^2}\int_{\lambda_{k+1}}^{\lambda_{k+2}}[\A(r)-\A(t)]{\rm d}r.
\end{align*}
Then using (\ref{Eq0: Dini condition operators}) and the fact that $\omega$ is non-decreasing we obtain
\begin{align*}
\|\A_\Lambda(t)-\A(t)\|_{\L(V,V_\gamma')}
&\leq \frac{\lambda_{k+1}-t}{(\lambda_{k+1}-\lambda_k)^2}\int_{\lambda_k}^{\lambda_{k+1}}\omega(t-r){\rm d}r+
\frac{t-\lambda_{k}}{(\lambda_{k+1}-\lambda_k)^2}\int_{\lambda_{k+1}}^{\lambda_{k+2}}\omega(t-r){\rm d}r
\\&\leq \omega(|\Lambda|)+\omega(2|\Lambda|)\leq 2\omega(2|\Lambda|),
\end{align*}
which proves the claim.\end{proof}
\section{$L^2$-maximal regularity in $H:$ a weak approximation}\label{section 3}
Recall that  $V,H$ denote two separable Hilbert spaces and $\fra:[0,T]\times V\times V\to \C$ is a non-autonomous  form satisfying  (\ref{eq:continuity-nonaut})-(\ref{eq 3:Dini-condition}) such that $D(A(0)^{1/2})=V.$ Let $\A(t)$ the operator associated with  $\mathfrak a(t,.,.)$  on $V'$ for each $t\in [0,T]$ and consider the  non-autonomous Cauchy problem
\begin{equation}\label{Abstract Cauchy problem}
\dot{u}(t)+\A(t)u(t)=f(t) \quad  {a.e.} \ \ \text on \quad [0,T], \ \ u(0)=u_0
\end{equation}
Let $\Lambda$ be an uniform subdivision of $[0,T],$ \[\A_\Lambda: [0,T]\to \L(V,V') \quad \text{and}\quad \fra_\Lambda :[0,T]\times V\times V\to \C\] be given by (\ref{Operator: approximation formula1})-(\ref{Operator: approximation formula2}) and  (\ref{form: approximation formula2}), respectively, and consider the Cauchy problem
 \begin{equation}\label{nCP in V'}
\dot{u}_\Lambda (t)+\A_\Lambda(t)u_\Lambda(t)=f(t) \quad  {a.e.} \ \ \text on \quad [0,T], \ \ u_\Lambda(0)=u_0.
\end{equation}
 Clearly, $t\mapsto\fra_\Lambda(\cdot,u,v)$ is piecewise $C^1$ for all $u,v\in V.$ Moreover, $\fra_\Lambda$ has the Kato square property by  Lemma \ref{square root property for the Linear-approximation}. Then the Cauchy problem (\ref{nCP in V'}) has  $L^2-$maximal regularity in $H$ by \cite[Theorem 1.1]{Bar71} or \cite[Theorem 4.2]{ADLO14}. On the other hand, we known by Lions' theorem that  for a given $f\in L^2(0,T,H)$ and $u_0\in V$ the Cauchy problem (\ref{Abstract Cauchy problem}) has a unique solution $u\in MR(V,V^\prime)$. Furthermore, it is known that the seqeunce $(u_\Lambda)_{\Lambda}$ of solutions of (\ref{nCP in V'}) converges (strongly) in $MR(V,V')$ to $u$ as $|\Lambda|$ goes to $0$ \cite[Proposition 3.2]{ELLA15}. The  main result of this section show that  $(u_\Lambda)$ converges weakly in $MR(V,H)$  to $u$  as $|\Lambda|$ goes to $0.$ This in particular gives an alternative proof of Theorem \ref{thm: Arendt-Monniaux}.
\begin{theorem}\label{main theorem:l2 max reg in H} Let $f\in L^2(0,T;H)$ and $u_0\in V$ and let $u_\Lambda\in MR(V,H)$ be the
    solution of (\ref{nCP in V'}). Then $u_\Lambda$ converges weakly in $MR(V,H)$ as $|\Lambda|
    \longrightarrow 0$ and $u:={\rm w}-\lim\limits_{|\Lambda|\to 0}u_\Lambda$ satisfies
		(\ref{Abstract Cauchy problem}).
\end{theorem}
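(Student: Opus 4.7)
The plan is to combine a uniform a priori bound on $(u_\Lambda)$ in $MR(V,H)$ with the strong convergence of $(u_\Lambda)$ in the weaker space $MR(V,V')$ that is already known from \cite[Proposition 3.2]{ELLA15}, and then use weak compactness to identify the limit. More precisely: first, I would extract a weakly convergent subsequence in $MR(V,H)$; second, I would match its limit with the Lions solution $u\in MR(V,V')$; and third, I would promote subsequential convergence to convergence of the entire net by uniqueness of the limit.

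The first and central step is to establish a uniform bound
\[
\|u_\Lambda\|_{MR(V,H)} \leq C\bigl(\|u_0\|_V + \|f\|_{L^2(0,T;H)}\bigr)
\]
with $C$ independent of $\Lambda.$ For each fixed $\Lambda$ such a bound is exactly what the Arendt--Monniaux Theorem \ref{thm: Arendt-Monniaux} delivers applied to $\fra_\Lambda:$ indeed $\fra_\Lambda$ is bounded and coercive with the same constants $M,\alpha$ as $\fra,$ has the Kato square root property by Proposition \ref{square root property for the Linear-approximation}, and admits the modulus of continuity $\omega_\Lambda$ in the sense of (\ref{eq 3:Dini-condition}) by Proposition \ref{Prop: Dini condition for Linear-approximation}. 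The crucial observation is that the constant produced by the Arendt--Monniaux argument depends on $\omega_\Lambda$ \emph{only} through the two quantities
\[
\sup_{t\in[0,T]}\frac{\omega_\Lambda(t)}{t^{\gamma/2}} \qquad\text{and}\qquad \int_0^T\frac{\omega_\Lambda(s)}{s^{1+\gamma/2}}\,\mathrm{d}s,
\]
and Proposition \ref{Prop: Dini condition for Linear-approximation}, estimates (\ref{eq 2:Dini-condition for Linear-approximation}) and (\ref{eq 3:Dini-condition for Linear-approximation}), shows that both are bounded by quantities depending only on $\omega,$ $T$ and $\gamma,$ uniformly in $\Lambda.$ Hence $C$ can be chosen independently of $\Lambda.$

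Once the uniform bound is in hand, reflexivity of the Hilbert space $MR(V,H)$ gives a sequence $|\Lambda_n|\to 0$ and a function $v\in MR(V,H)$ with $u_{\Lambda_n}\rightharpoonup v$ weakly in $MR(V,H).$ Since the embedding $MR(V,H)\hookrightarrow MR(V,V')$ is continuous, the same convergence holds weakly in $MR(V,V').$ But by \cite[Proposition 3.2]{ELLA15} the whole net $(u_\Lambda)$ converges strongly in $MR(V,V')$ to the unique Lions solution $u$ of (\ref{Abstract Cauchy problem}). Identifying the two limits yields $v=u;$ in particular $u\in MR(V,H)$ and $u$ solves (\ref{Abstract Cauchy problem}). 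A standard subsequence argument (every weakly convergent subsequence has the same limit $u$) then shows that the whole net $u_\Lambda\rightharpoonup u$ weakly in $MR(V,H)$ as $|\Lambda|\to 0.$

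The main obstacle is the uniform a priori estimate: one must revisit the Arendt--Monniaux proof carefully enough to track the precise dependence of constants on the modulus of continuity, so that Proposition \ref{Prop: Dini condition for Linear-approximation} can be invoked to deliver uniformity in $\Lambda.$ Everything else---weak compactness, identification of limits via the already-proved $V'$-level convergence, and the subsequence-to-net upgrade---is routine functional-analytic bookkeeping.
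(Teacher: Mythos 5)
Your proposal is correct and follows essentially the same route as the paper: a $\Lambda$-uniform a priori bound in $MR(V,H)$, obtained by tracking how the Arendt--Monniaux constants depend on the modulus of continuity (which is exactly what Proposition \ref{Prop: Dini condition for Linear-approximation} feeds into the Acquistapace--Terreni representation formula, Lemma \ref{Lemma: Invertibility of I-QLambda} and the uniform bounds on $\A_\Lambda u_{\Lambda,1}$ and $\A_\Lambda u_{\Lambda,2}$), followed by weak compactness and identification of the limit via the known strong convergence in $MR(V,V')$. The ``obstacle'' you flag is precisely what those lemmas resolve, so no genuine gap remains.
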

\noindent For the proof we need first some preliminary lemmas. Let $f\in L^2(0,T;H)$ and $u_0\in V,$ then  the solution $u_\Lambda$ of (\ref{nCP in V'}) satisfies the following key formula
\begin{equation}\label{Formula for the solution}
u_\Lambda(t)=e^{-tA_\Lambda(t)}u_0+\int_{0}^te^{-(t-s)\A_\Lambda(t)}f(s){\rm  d}s+\int_{0}^te^{-(t-s)A_\Lambda(t)}(\A_\Lambda(t)-\A_\Lambda(s))u_\Lambda(s){\rm  d}s
\end{equation}
for all $t\in[0,T].$ This formula is due to Acquistapace and Terreni \cite{Ac-Ter87} and was proved in a more general setting in \cite[Proposition 3.5]{Ar-Mo15}. For the operator valued function $\A_\Lambda,$ this formula can be derived in a
 more classical way. In the sequel we will use the following notations:

\begin{equation}\label{Formula for the solution 2}
u_{\Lambda,1}(t):=e^{-tA_\Lambda(t)}u_0, \qquad  u_{\Lambda,2}(t):=\int_{0}^t e^{-(t-s)A_\Lambda(t)}f(s){\rm  d}s
\end{equation}
and

\begin{equation}\label{Formula for the solution 3}
u_{\Lambda,3}(t):=\int_{0}^te^{-(t-s)\A_\Lambda(t)}(\A_\Lambda(t)-\A_\Lambda(s))u_\Lambda(s){\rm  d}s.
\end{equation}
 The next two lemmas follow, thanks to Proposition \ref{Prop: Dini condition for Linear-approximation}, Lemma \ref{Lemma:Dini condition approximation operators} and  Remark \ref{Remark: estimations for general from}, by using the same argument as in the proof of Arendt and Monniaux \cite[Theorem 4.1]{Ar-Mo15}.

\begin{lemma}\label{Lemma: Invertibility of I-QLambda}
Let $Q_\Lambda^\mu:L^2(0,T,H)\to L^2(0,T,H)$ denotes the linear operator  defined for all $g\in L^2(0,T,H)$ and $\mu\geq 0$ by
\begin{equation}\label{def l operator Q}
(Q_\Lambda^\mu g)(t):=\int_{0}^t(\A_\Lambda(t)+\mu)e^{-(t-s)(\A_\Lambda(t)+\mu)}(\A_\Lambda(t)-\A_\Lambda(s))(\A_\Lambda(s)+\mu)^{-1}g(s){\rm  d}s\ \quad t\textrm{-a.e}.
\end{equation}
Then $\lim\limits_{\mu \to \infty}\|Q_\Lambda^\mu\|_{L^2(0,T,H)}=0$ uniformly on $\Lambda$ and thus $I-Q_\Lambda^\mu$ is invertible on $L^2(0,T,H)$ for $\mu$ large enough and for all $\Lambda.$
\end{lemma}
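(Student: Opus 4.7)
The plan is to prove $\|Q_\Lambda^\mu\|_{\L(L^2(0,T,H))}\to 0$ as $\mu\to\infty$ with a rate independent of $\Lambda$; the invertibility of $I-Q_\Lambda^\mu$ for large $\mu$ then follows from the Neumann series, uniformly in $\Lambda$.

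First I would estimate the operator-valued integrand pointwise by factoring it through the Gelfand triple:
$$H\;\xrightarrow{(\A_\Lambda(s)+\mu)^{-1}}\;V\;\xrightarrow{\A_\Lambda(t)-\A_\Lambda(s)}\;V_\gamma'\;\xrightarrow{(\A_\Lambda(t)+\mu)e^{-(t-s)(\A_\Lambda(t)+\mu)}}\;H.$$
The resolvent factor is controlled by item $(3)$ of Proposition \ref{lemma: estimations for general from} applied at $\lambda=-\mu$, giving $c/(1+\mu)^{1/2}$; the constant is uniform in $\Lambda$ and $s$ by Remark \ref{Remark: estimations for general from}. The middle factor is at most $\omega_\Lambda(t-s)$ by Lemma \ref{Lemma:Dini condition approximation operators}. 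For the left-most factor, write $e^{-(t-s)(\A_\Lambda(t)+\mu)}=e^{-\mu(t-s)}e^{-(t-s)\A_\Lambda(t)}$, split $\A_\Lambda(t)+\mu$ additively, and combine items $(6)$ and $(9)$ of Proposition \ref{lemma: estimations for general from} (factorising $\A_\Lambda(t)e^{-(t-s)\A_\Lambda(t)}$ as $\A_\Lambda(t)e^{-(t-s)\A_\Lambda(t)/2}\cdot e^{-(t-s)\A_\Lambda(t)/2}$) to obtain
$$\bigl\|(\A_\Lambda(t)+\mu)e^{-(t-s)(\A_\Lambda(t)+\mu)}\bigr\|_{\L(V_\gamma',H)}\;\le\; e^{-\mu(t-s)}\Bigl[\frac{c}{(t-s)^{1+\gamma/2}}+\frac{c\mu}{(t-s)^{\gamma/2}}\Bigr].$$

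Since the integral defining $Q_\Lambda^\mu$ is of Volterra-convolution type, Young's inequality yields $\|Q_\Lambda^\mu\|_{\L(L^2(0,T,H))}\le\|k_\Lambda^\mu\|_{L^1(0,T)}$, where $k_\Lambda^\mu(r)$ is the product of the three pointwise bounds above. The first summand of this scalar kernel gives
$$\frac{c}{(1+\mu)^{1/2}}\int_0^T\frac{\omega_\Lambda(r)}{r^{1+\gamma/2}}\,dr\;\le\;\frac{C_1}{(1+\mu)^{1/2}}$$
by inequality (\ref{eq 2:Dini-condition for Linear-approximation}) of Proposition \ref{Prop: Dini condition for Linear-approximation}. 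The second summand (the one carrying the dangerous linear $\mu$-factor) is tamed using (\ref{eq 3:Dini-condition for Linear-approximation}), which supplies $\omega_\Lambda(r)/r^{\gamma/2}\le c$ uniformly in $\Lambda$; hence
$$\frac{c\mu}{(1+\mu)^{1/2}}\int_0^T\frac{\omega_\Lambda(r)\,e^{-\mu r}}{r^{\gamma/2}}\,dr\;\le\;\frac{c\mu}{(1+\mu)^{1/2}}\cdot\frac{c'}{\mu}\;\le\;\frac{C_2}{(1+\mu)^{1/2}}.$$
Since all constants are independent of $\Lambda$, this gives $\|Q_\Lambda^\mu\|_{\L(L^2(0,T,H))}\le C(1+\mu)^{-1/2}$, proving the first assertion. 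Choosing $\mu$ so large that $C(1+\mu)^{-1/2}<1$, the Neumann series $\sum_{n\ge0}(Q_\Lambda^\mu)^n$ converges in $\L(L^2(0,T,H))$ and inverts $I-Q_\Lambda^\mu$, uniformly in $\Lambda$.

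The main obstacle lies in the first step: the additive $\mu$-piece in $(\A_\Lambda(t)+\mu)e^{-(t-s)(\A_\Lambda(t)+\mu)}$ grows linearly in $\mu$ as an operator $V_\gamma'\to H$ and is non-integrable near the diagonal for $\gamma$ close to $1$; controlling it requires exploiting both the shift-induced exponential decay $e^{-\mu(t-s)}$ and the a priori Hölder-type bound $\omega_\Lambda(r)\le cr^{\gamma/2}$ simultaneously. The $\Lambda$-uniformity throughout rests on Remark \ref{Remark: estimations for general from} together with the uniform integral and pointwise bounds of Proposition \ref{Prop: Dini condition for Linear-approximation}.
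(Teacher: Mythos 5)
Your argument is correct and is essentially the proof the paper has in mind: the paper does not write this lemma out but defers to the Arendt--Monniaux argument for \cite[Theorem 4.1]{Ar-Mo15} adapted via Proposition \ref{Prop: Dini condition for Linear-approximation}, Lemma \ref{Lemma:Dier condition approximation operators} and Remark \ref{Remark: estimations for general from}, which is exactly the kernel estimate plus Young's convolution inequality that you carry out, with the $\Lambda$-uniformity coming from the uniform bounds on $\omega_\Lambda$. Your additive splitting $(\A_\Lambda(t)+\mu)e^{-r(\A_\Lambda(t)+\mu)}=e^{-\mu r}(\A_\Lambda(t)e^{-r\A_\Lambda(t)}+\mu e^{-r\A_\Lambda(t)})$ and the subsequent taming of the $\mu/r^{\gamma/2}$ term via $\omega_\Lambda(r)\lesssim r^{\gamma/2}$ and $\mu\int_0^\infty e^{-\mu r}\,dr=1$ are sound and match the intended mechanism.
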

\begin{lemma} \label{Lemma: The uniform  Boundedness in $L^2(0,T,H)$} There exists constant $c>0$ depending only on $\alpha,M,\gamma$ and $c_H$  such that
\begin{align}\label{eq1: uniform estimate of u1}
\|\A_\Lambda u_{\Lambda,1}\|_{L^2(0,T,H)}&\leq  c \|u_0\|_{V}^2,
\\\label{eq2: uniform estimate of u1}
     \|\A_\Lambda u_{\Lambda,2}\|_{L^2(0,T,H)}&\leq c\|f\|_{L^2(0,T,H)}.
\end{align}
\end{lemma}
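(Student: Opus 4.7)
The plan is to verify that the entire approximating family $\{\fra_\Lambda\}$ satisfies the structural hypotheses used in the Arendt--Monniaux proof of \cite[Theorem 4.1]{Ar-Mo15} with constants independent of $\Lambda$, and then invoke their arguments verbatim. The starting point is that Remark \ref{Remark: estimations for general from} gives us Proposition \ref{lemma: estimations for general from} for $\A_\Lambda(t)$ with a single constant $c=c(M,\alpha,\gamma,c_H)$ that does not depend on $t$ or on the subdivision $\Lambda$. In particular, $A_\Lambda(t)$ is sectorial on $H$ with a uniform angle and a uniform sectoriality constant, so the contour representation \eqref{analytic representation} and all semigroup/resolvent bounds in Proposition \ref{lemma: estimations for general from} are available with constants not depending on $(\Lambda,t)$.

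For the estimate on $u_{\Lambda,1}$, I would factorize
\[
\A_\Lambda(t)\, u_{\Lambda,1}(t)=\bigl(A_\Lambda(t)^{1/2}e^{-\frac{t}{2}A_\Lambda(t)}\bigr)\bigl(A_\Lambda(t)^{1/2}e^{-\frac{t}{2}A_\Lambda(t)}\bigr)u_0,
\]
and exploit two independent facts. First, by Proposition \ref{square root property for the Linear-approximation} combined with the standard Kato equivalence (whose constants depend only on $M,\alpha,\gamma,c_H$, hence are uniform in $(\Lambda,t)$), one has $\|A_\Lambda(t)^{1/2}u_0\|_H\leq c\|u_0\|_V$. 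Second, a square--function/McIntosh estimate applied to the bounded holomorphic semigroup generated by $-A_\Lambda(t)$, implemented concretely via the contour integral \eqref{analytic representation} and the uniform resolvent bounds $(\ref{Eq1: estimation resolvent})$--$(\ref{analytic estimation})$, gives $\int_0^T\|A_\Lambda(t)^{1/2}e^{-\frac{t}{2}A_\Lambda(t)}v\|_H^2\,\d t\leq c\|v\|_H^2$ for each fixed $v$; combined with the uniform pointwise bound $\|A_\Lambda(t)^{1/2}e^{-\frac{t}{2}A_\Lambda(t)}\|_{\L(H)}\leq c/t^{1/2}$ from Proposition \ref{lemma: estimations for general from}, this yields $\|\A_\Lambda u_{\Lambda,1}\|_{L^2(0,T;H)}\leq c\|u_0\|_V$, which is the (corrected) first inequality.

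For the estimate on $u_{\Lambda,2}$, the claim is a uniform $L^2$-maximal regularity estimate for the "frozen" convolution $f\mapsto \int_0^t\! e^{-(t-s)A_\Lambda(t)}f(s)\,\d s$. Here I would argue as in Arendt--Monniaux: expand the semigroup via \eqref{analytic representation}, interchange integrals by Fubini, and bound the resulting kernel using $(\ref{analytic estimation})$ together with singular integral / Young-type estimates on $(0,T)$. Since the only constants that enter are the uniform sectoriality and resolvent constants of Proposition \ref{lemma: estimations for general from}, the resulting inequality $\|\A_\Lambda u_{\Lambda,2}\|_{L^2(0,T;H)}\leq c\|f\|_{L^2(0,T;H)}$ is uniform in $\Lambda$.

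The main obstacle---and the only real issue to double-check---is that every constant in the Arendt--Monniaux argument admits a presentation in terms of $M,\alpha,\gamma,c_H$ alone, and in particular does not involve anything that could deteriorate as $|\Lambda|\to 0$ (for instance, a Kato constant for an individual operator or a modulus-of-continuity constant that depends on $\Lambda$). This is precisely what Remark \ref{Remark: estimations for general from}, Proposition \ref{square root property for the Linear-approximation}, Proposition \ref{Prop: Dini condition for Linear-approximation} and Lemma \ref{Lemma:Dini condition approximation operators} guarantee: the coercivity/boundedness constants, the Kato norm equivalence, and the modulus $\omega_\Lambda$ are all uniformly controlled. Once this bookkeeping is in place, the two bounds follow by literally re-reading the proof of \cite[Theorem 4.1]{Ar-Mo15} with $\A$ replaced by $\A_\Lambda$.
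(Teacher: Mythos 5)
Your proposal takes essentially the same route as the paper: the paper proves this lemma simply by noting that the argument of Arendt--Monniaux \cite[Theorem 4.1]{Ar-Mo15} applies verbatim to $\A_\Lambda$ because Remark \ref{Remark: estimations for general from}, Proposition \ref{Prop: Dini condition for Linear-approximation} and Lemma \ref{Lemma:Dini condition approximation operators} make all the relevant constants uniform in $\Lambda$, which is precisely your strategy. Your remark that the first bound should read $c\|u_0\|_V$ rather than $c\|u_0\|_V^2$ is also well taken (the exponent is a typo in the statement).
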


\par According to Lemma \ref{Lemma: Invertibility of I-QLambda} and replacing $\A_\Lambda(t)$ with $\A(t)_\Lambda+\mu,$ we may assume  without loss of generality that $Q_\Lambda=Q^\mu_\Lambda$ satisfies $\|Q_\Lambda\|_{\L(L^2(0,T,H))}< 1,$ and then $I-Q_\Lambda$ is invertible by the Neumann series. Now we can give the proof of Theorem \ref{main theorem:l2 max reg in H}.

%
\begin{proof}(\textit{of Theorem} \ref{main theorem:l2 max reg in H}) Since $(I-Q_\Lambda)$ is invertible in  $L^2(0,T,H),$ we deduce from (\ref{Formula for the solution}) that
\[\dot u_\Lambda=\A_\Lambda u_\Lambda=(I-Q_\Lambda)^{-1}(\A_\Lambda u_\Lambda^1+\A_\Lambda u_\Lambda^2).\]
This equality and Lemma \ref{Lemma: The uniform  Boundedness in $L^2(0,T,H)$},  yield the estimate
\begin{equation}
\Vert \dot u_\Lambda\Vert_{L^2(0,T;H)}\leq {\bf c}\big[\Vert u_0\Vert_V+\Vert f\Vert_{L^2(0,T;H)}\big]
\end{equation}
for a constant ${\bf c}>0$ independent of the subdivision $\Lambda.$ Since for all $t\in [0,T]$ one has $u_\Lambda(t)=u_\Lambda(0)+\int_0^t \dot u_\Lambda(s){\rm d}s,$ we conclude that
\begin{equation}
\Vert u_\Lambda\Vert_{H^1(0,T;H)}\leq {\bf c}\big[\Vert u_0\Vert_V+\Vert f\Vert_{L^2(0,T;H)}\big]
\end{equation}
for some constant ${\bf c}>0$ independent of the subdivision $\Lambda.$
Then there exists a subsequence of $(u_\Lambda),$ still denoted by $(u_\Lambda)$ that converges weakly to some $v\in H^1(0,T,H)$ as $|\Lambda|\longrightarrow 0.$
\par\noindent We known that the Cauchy problem (\ref{Abstract Cauchy problem}) has a unique solution $u\in MR(V,V')$ by Lions' theorem. On the other hand, $(u_\Lambda)$ converges strongly to $u$ on $MR(V,V')$  by \cite[Proposition 3.2]{ELLA15}. In particular, $u_\Lambda\longrightarrow u$ in $L^2(0,T,V).$ Thus $\A_\Lambda u_\Lambda\longrightarrow Au$ in $L^2(0,T,V')$ as $|\Lambda|\longrightarrow 0$ by \cite[Lemma 2.1]{ELLA15}. It follows, by the uniqueness of the limits , that $u= v\in H^1(0,T,H)$ since  \[\dot u_\Lambda=f-\A_\Lambda u_\Lambda\underset{|\Lambda|\rightarrow 0}{\longrightarrow} f-\A u=\dot u \ \text{ in } L^2(0,T,V').\]
 This completes the proof.
\end{proof}

\section{$L^2$-maximal regularity in $H:$ uniform approximation \label{Uniform Convergence}}

Assume that  $H,V$  and $\fra:[0,T]\times V\times V\longrightarrow \C$ are as in Section \ref{Approximation}. Let $(f,u_0)\in L^2(0,T,H)\times V,$  $\Lambda$ be a uniform subdivision of $[0,T]$ and $u, u_\Lambda\in MR(V,H)$ be the solutions of (\ref{Abstract Cauchy problem}) and (\ref{nCP in V'}) respectively. In the previous section we have seen  that $(u_\Lambda)$ converges weakly to $u$ in $MR(V,H)$ as $|\Lambda|\longrightarrow 0.$ The aim of this
 section is to prove that this convergence holds for the strong topology of $MR(V,H)$ and uniformly on the initial data $u_0$ and $f.$ 
\\

The following result is the key idea of this  section.
\begin{theorem}\label{convergence uniform} There exists a positive constant $\textbf{c}>0$ depending only on $M,\alpha,\gamma$ and $c_H$  such that 
\begin{equation}\label{estimation of the error of the convergence2}
\|u-u_\Lambda\|_{H^1(0,T,H)}\leq \textbf{c}\Big[\omega(2|\Lambda|)+\frac{\omega(2|\Lambda|)}{|\Lambda|^{\gamma/2}}
+\int_0^{2|\Lambda|}\frac{\omega(t)}{t^{1+\gamma/2}}{\rm d}t
\Big]\Big[\|f\|_{L^2(0,T,H)}+\|u_0\|_V\Big].
\end{equation}

\end{theorem}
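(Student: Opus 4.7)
My plan is to adapt the argument of Theorem \ref{main theorem:l2 max reg in H} to the quantitative comparison of $u$ and $u_\Lambda$. Since both problems (\ref{Abstract Cauchy problem}) and (\ref{nCP in V'}) satisfy the hypotheses of the Arendt--Monniaux theorem (preserved for $\fra_\Lambda$ by Propositions \ref{square root property for the Linear-approximation} and \ref{Prop: Dini condition for Linear-approximation}), the Acquistapace--Terreni formula (\ref{Formula for the solution}) applies to both solutions. The manipulation that in Section \ref{section 3} gives $(\A_\Lambda + \mu)u_\Lambda = (I - Q_\Lambda^\mu)^{-1}[(\A_\Lambda + \mu)u_{\Lambda,1}+(\A_\Lambda + \mu)u_{\Lambda,2}]$ (after the shift $\A_\Lambda \mapsto \A_\Lambda + \mu$) yields the analogous formula for the exact problem via an operator $Q^\mu$ built from $\A$. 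Using $\dot u - \dot u_\Lambda = \A_\Lambda u_\Lambda - \A u$, a telescoping identity reduces $\|\dot u - \dot u_\Lambda\|_{L^2(0,T;H)}$ to controlling two quantities: first, the \emph{source difference} $\Delta := (\A u_1 - \A_\Lambda u_{\Lambda,1}) + (\A u_2 - \A_\Lambda u_{\Lambda,2})$ in $L^2(0,T;H)$; and second, the \emph{operator difference} $\|Q^\mu - Q_\Lambda^\mu\|_{\L(L^2(0,T;H))}$. The intervening factors $(I-Q^\mu)^{-1}$ and $(I-Q_\Lambda^\mu)^{-1}$ are uniformly bounded by Lemma \ref{Lemma: Invertibility of I-QLambda}, and $\|\A u_1+\A u_2\|_{L^2(0,T;H)} \le c[\|u_0\|_V + \|f\|_{L^2(0,T;H)}]$ is controlled by Lemma \ref{Lemma: The uniform  Boundedness in $L^2(0,T,H)$}.

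For $\Delta$, I use the Dunford representation (\ref{analytic representation}) combined with the resolvent identity $(\mu+\A)^{-1} - (\mu+\A_\Lambda)^{-1} = (\mu+\A)^{-1}(\A_\Lambda-\A)(\mu+\A_\Lambda)^{-1}$. Plugging in $\|\A(t) - \A_\Lambda(t)\|_{\L(V,V_\gamma')} \le 2\omega(2|\Lambda|)$ from Lemma \ref{Lemma:Dini condition approximation operators} and the resolvent estimates of Proposition \ref{lemma: estimations for general from}, the contour integration yields a pointwise estimate of the form $\|(\A(t)e^{-tA(t)} - \A_\Lambda(t)e^{-tA_\Lambda(t)})u_0\|_H \le c\,\omega(2|\Lambda|)\,t^{-\gamma/2}\|u_0\|_V$, whose $L^2(0,T;dt)$-norm is finite since $\gamma<1$; a convolution variant handled via Young's inequality treats the summand involving $f$. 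Both bounds are of order $\omega(2|\Lambda|)[\|u_0\|_V + \|f\|_{L^2(0,T;H)}]$ and ultimately contribute the first term in the target bound.

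The main obstacle is estimating $\|Q^\mu - Q_\Lambda^\mu\|_{\L(L^2(0,T;H))}$. I decompose the kernel difference into three summands that isolate, respectively, the semigroup difference $\A(t)e^{-rA(t)} - \A_\Lambda(t)e^{-rA_\Lambda(t)}$, the coefficient difference $(\A(t)-\A(s)) - (\A_\Lambda(t)-\A_\Lambda(s))$, and the resolvent difference $\A(s)^{-1} - \A_\Lambda(s)^{-1}$. The first summand, via the pointwise bound from the source-difference step applied in $\L(V_\gamma',H)$, produces an integrable convolution kernel and contributes $\omega(2|\Lambda|)$ to the norm by Young's inequality (using $\int_0^T \omega(r)r^{-1/2-\gamma}\,dr < \infty$). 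The other two summands use the smoothing $\|\A_\Lambda(t)e^{-rA_\Lambda(t)}\|_{\L(V_\gamma',H)} \le c\,r^{-1-\gamma/2}$ together with the modulus estimates of Proposition \ref{Prop: Dini condition for Linear-approximation}. The time integration is split at the scale $r=2|\Lambda|$: in the near region $r<2|\Lambda|$, the bound $\|(\A(t)-\A(s)) - (\A_\Lambda(t)-\A_\Lambda(s))\|_{V\to V_\gamma'} \leq \omega(r) + \omega_\Lambda(r) = \omega(r) + (r/|\Lambda|)\omega(4|\Lambda|)$ produces after integration the Dini tail $\int_0^{2|\Lambda|}\omega(r)r^{-1-\gamma/2}\,dr$ plus the term $\omega(2|\Lambda|)|\Lambda|^{-\gamma/2}$; in the far region $r\geq 2|\Lambda|$, the uniform bound $\leq 4\omega(2|\Lambda|)$ from Lemma \ref{Lemma:Dini condition approximation operators} combined with $\int_{2|\Lambda|}^T r^{-1-\gamma/2}\,dr \asymp |\Lambda|^{-\gamma/2}$ yields the same $\omega(2|\Lambda|)|\Lambda|^{-\gamma/2}$ contribution. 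Schur's test then assembles these into the claimed bound on $\|Q^\mu - Q_\Lambda^\mu\|$. Combining with the control of $\Delta$ gives the desired $\|\dot u - \dot u_\Lambda\|_{L^2(0,T;H)}$-estimate; since $w := u - u_\Lambda$ satisfies $w(0) = 0$, $\|w\|_{L^2(0,T;H)} \leq T^{1/2}\|\dot w\|_{L^2(0,T;H)}$ closes the $H^1(0,T;H)$-estimate.
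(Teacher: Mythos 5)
Your proposal follows essentially the same route as the paper's proof: the Acquistapace--Terreni representation for both $u$ and $u_\Lambda$, the reduction to the source difference $(\A u_1-\A_\Lambda u_{\Lambda,1})+(\A u_2-\A_\Lambda u_{\Lambda,2})$ plus the operator difference $Q-Q_\Lambda$, the same three-way splitting of the kernel of $Q-Q_\Lambda$ (resolvent difference, coefficient difference, semigroup difference via the Dunford integral and the resolvent identity), the same near/far splitting at scale $2|\Lambda|$ producing the terms $\omega(2|\Lambda|)|\Lambda|^{-\gamma/2}$ and $\int_0^{2|\Lambda|}\omega(s)s^{-1-\gamma/2}\,{\rm d}s$, and the same final integration from $w(0)=0$. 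The argument is correct and matches the paper's proof in structure and in all essential estimates.
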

With this estimate theorem in hand, the study of the uniform convergence, with respect to  initial datats,
 of $u_\Lambda \longrightarrow u$ in $MR(V,H)$ becomes easy. Due to the results of Section \ref{Banach space case} and  hypothesis that $\omega$ satisfies all we need is to look when
\begin{equation}\label{Condition supplementaire 1}
\lim\limits_{|\Lambda|\to 0}\frac{\omega(2|\Lambda|)}{|\Lambda|^{\gamma/2}}=0
\end{equation}
holds. Endeed, clearly  (\ref{Eq0: Dini condition operators})  implies that $\A:[0,T]\longrightarrow\L(V,V')$ is, in particular, continuous. Moreover, $\A_\Lambda:[0,T]\longrightarrow\L(V,V')$ satisfies conditions $(H_1)$-$(H_4)$ (see  Section \ref{Banach space case}) by taking $D=V$ and  $X=V'.$ Thus one can apply Theorem \ref{Thm: uniform convergence for homogenuous case in Banach space} and conclude that $u_\Lambda \longrightarrow u$ in $L^2(0,T,V)$ uniformly on the initial data $u_0\in V\subset H=Tr(V,V')$ and the homogeneity $f\in L(0,T,H).$
\begin{corollary}\label{Corollary 1: uniform convergence} There exists a null sequence $(t_n)_{n\in\N}\subset [0,T]$ depending on $\omega$ such that  for every $\varepsilon>0$ there exists $n_0\in\N$ such that for all $n\geq n_0$ one has
\[\|u-u_{\Lambda_n}\|_{MR}\leq \varepsilon \big[\|u_0\|_V+\|f\|_{L^2(0,T,H)}\big].\]
for all subdivisions $\Lambda_n$ of $[0,T]$ with $2|\Lambda_n|=t_n.$
\end{corollary}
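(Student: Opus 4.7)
The plan is to combine Theorem~\ref{convergence uniform}, which provides the explicit bound on $\|u-u_\Lambda\|_{H^1(0,T;H)}$, with Theorem~\ref{Thm: uniform convergence for homogenuous case in Banach space} applied with $D=V$ and $X=V'$ in order to control $\|u-u_\Lambda\|_{L^2(0,T;V)}$. The first step is to produce the null sequence. I would observe that the integrability assumption~(\ref{eq 2:Dini-condition}) forces
\[
\liminf_{t\to 0^+}\frac{\omega(t)}{t^{\gamma/2}}=0,
\]
since if instead $\omega(t)\ge c\,t^{\gamma/2}$ held on some $(0,\delta)$ with $c>0$, then the integrand $\omega(t)/t^{1+\gamma/2}$ would dominate $c/t$ near zero, contradicting~(\ref{eq 2:Dini-condition}). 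Extracting a realising sequence yields $(t_n)\subset(0,T]$ with $t_n\downarrow 0$ and $\omega(t_n)/t_n^{\gamma/2}\to 0$.

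Given this sequence I would set $|\Lambda_n|:=t_n/2$ for any uniform subdivision $\Lambda_n$ with that mesh and evaluate the three ingredients of the bracket in~(\ref{estimation of the error of the convergence2}). The quantity $\omega(2|\Lambda_n|)=\omega(t_n)$ tends to zero since~(\ref{eq 1:Dini-condition}) implies $\omega(t)=O(t^{\gamma/2})$; the critical middle term equals $2^{\gamma/2}\omega(t_n)/t_n^{\gamma/2}$, which vanishes by construction of $(t_n)$; and $\int_0^{2|\Lambda_n|}\omega(s)/s^{1+\gamma/2}\,\d s=\int_0^{t_n}\omega(s)/s^{1+\gamma/2}\,\d s\to 0$ by absolute continuity of the Lebesgue integral combined with~(\ref{eq 2:Dini-condition}). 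Theorem~\ref{convergence uniform} therefore produces, for any prescribed $\varepsilon>0$, an index $n_0$ after which $\|u-u_{\Lambda_n}\|_{H^1(0,T;H)}\le \varepsilon\bigl[\|u_0\|_V+\|f\|_{L^2(0,T;H)}\bigr]$ uniformly in $(u_0,f)$.

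For the remaining $L^2(0,T;V)$-component of the $MR(V,H)$-norm, I would verify hypotheses $(H_1)$--$(H_4)$ for the family $\A_{\Lambda_n}$ with $D=V$, $X=V'$: boundedness $(H_1)$ and strong measurability are inherited from $\fra_{\Lambda_n}$ through~(\ref{eq:continuity-nonaut}); pointwise convergence $(H_2)$ is \cite[Lemma 2.1]{ELLA15}; relative continuity $(H_3)$ holds with $\eta=0$ by Lemma~\ref{Lemma:Dini condition approximation operators} together with the continuous embedding $V'_\gamma\hookrightarrow V'$ and the already established $\omega(2|\Lambda_n|)\to 0$; and $(H_4)$ is Lions' Theorem~\ref{wellposedness in V'} applied to each $\fra_{\Lambda_n}$. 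Since $\eta=0$, the last sentence of Theorem~\ref{Thm: uniform convergence for homogenuous case in Banach space} yields $u_{\Lambda_n}\to u$ in $L^2(0,T;V)\cap H^1(0,T;V')$ uniformly in $(u_0,f)\in H\times L^2(0,T;V')$, and composing with $V\hookrightarrow H$ and $L^2(0,T;H)\hookrightarrow L^2(0,T;V')$ upgrades this to the norms appearing in the corollary. Adding the two pieces yields the claimed $MR$-norm estimate. The main obstacle is the opening $\liminf$ argument: it is precisely the place where the Dini-type condition~(\ref{eq 2:Dini-condition}) is converted into the existence of a subsequence that neutralises the delicate middle term of~(\ref{estimation of the error of the convergence2}), a term which need not vanish along an arbitrary mesh refinement.
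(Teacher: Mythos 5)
Your proposal is correct and follows essentially the same route as the paper: the $\liminf$ argument extracting a null sequence from the Dini condition (\ref{eq 2:Dini-condition}) is exactly the paper's proof of the corollary, and the combination of Theorem \ref{convergence uniform} for the $H^1(0,T;H)$ part with Theorem \ref{Thm: uniform convergence for homogenuous case in Banach space} (with $D=V$, $X=V'$, $\eta=0$) for the $L^2(0,T;V)$ part is precisely the strategy the paper sets out in the discussion preceding the corollary. Your write-up is in fact more explicit than the paper's terse proof about why each of the three terms in the bracket of (\ref{estimation of the error of the convergence2}) vanishes along the chosen sequence.
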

\begin{proof} We claim that \[\liminf_{t\to 0} \frac{\omega(t)}{t^{\gamma/2}}=0.\] Other wise the integral
\[\int_0^T \frac{\omega(s)}{s^{1+\gamma{2}}}{
\rm d}s=\infty\]
which contradict the assumption (\ref{eq 2:Dini-condition}). This and  Theorem \ref{convergence uniform} completes the proof.
\end{proof}
Finally, if we assume that $\omega$ satisfies the following addition condition
\begin{equation}\label{Condition supplementary 2}
\lim\limits_{t\to 0}\frac{\omega(t)}{t^{\gamma/2}}=0,
\end{equation}
then the statement of Corollary \ref{Corollary 1: uniform convergence} holds for all uniform subdivision $\Lambda$ of $[0,T].$ 
\begin{corollary} \label{Corollary 2: uniform convergence}For all $\varepsilon>0$ there exists $\delta>$ such that for each subdivision $\Lambda$ of $[0,T]$
\[|\Lambda|\leq \delta \Longrightarrow \|u-u_{\Lambda}\|_{MR}\leq \varepsilon \big[\|x_0\|_V+\|f\|_{L^2(0,T,H)}\big].\]
\end{corollary}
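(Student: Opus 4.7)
The plan is to decompose the $MR(V,H)$ norm into its two defining pieces, $\|\cdot\|_{L^2(0,T;V)}$ and $\|\cdot\|_{H^1(0,T;H)}$, and bound each separately, invoking Theorem~\ref{convergence uniform} for the second piece and the Banach-space uniform convergence machinery of Section~\ref{Banach space case} for the first. The additional assumption (\ref{Condition supplementary 2}) is precisely what makes both bounds tend to $0$ simultaneously as $|\Lambda|\to 0$, independently of the choice of subdivision.

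For the $H^1(0,T;H)$ piece I would simply read off Theorem~\ref{convergence uniform}, which already delivers
\[\|u-u_\Lambda\|_{H^1(0,T;H)}\leq \mathbf{c}\,\Phi(|\Lambda|)\,\bigl[\|u_0\|_V+\|f\|_{L^2(0,T;H)}\bigr],\]
with $\Phi(h):=\omega(2h)+\omega(2h)/h^{\gamma/2}+\int_0^{2h}\omega(t)/t^{1+\gamma/2}\,\d t$. Under (\ref{Condition supplementary 2}), the middle term vanishes by assumption; the first term vanishes because $\omega$ is continuous and must satisfy $\omega(0)=0$ (otherwise the integral in (\ref{eq 2:Dini-condition}) would diverge at the origin); and the third term vanishes by absolute continuity of the integral, using (\ref{eq 2:Dini-condition}). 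Hence $\Phi(|\Lambda|)\to 0$ uniformly, giving the half of the estimate in the $H^1(H)$ norm.

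For the $L^2(0,T;V)$ piece I would apply Theorem~\ref{Thm: uniform convergence for homogenuous case in Banach space} in the setting $D=V$, $X=V'$, $p=2$, where the trace space is $H$ and the ambient $MR$ space is $MR_2(V,V')\supset L^2(0,T;V)$. Hypotheses $(H_1)$, $(H_2)$ and $(H_4)$ follow from boundedness of $\fra$, strong measurability together with the pointwise convergence $\A_\Lambda(t)u\to\A(t)u$ almost everywhere, and Lions' theorem respectively. The crucial point is $(H_3)$: by (\ref{Eq2:Dini condition operators}) together with the continuous embedding $V_\gamma'\hookrightarrow V'$, one has
\[\|\A_\Lambda(t)-\A(t)\|_{\L(V,V')}\leq C\,\omega(2|\Lambda|),\]
with $C$ depending only on $c_H$ and the embedding constant. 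Under (\ref{Condition supplementary 2}) the right-hand side tends to $0$, so $(H_3)$ holds with $\eta=0$ and arbitrarily small $\varepsilon$ once $|\Lambda|$ is small enough. Consequently the parasitic $\tilde\eta\,\eta\,\|u_0\|_{Tr}$ term in Theorem~\ref{Thm: uniform convergence for homogenuous case in Banach space} drops out, and one obtains
\[\|u-u_\Lambda\|_{MR_2(V,V')}\leq \varepsilon\bigl[\|u_0\|_H+\|f\|_{L^2(0,T;V')}\bigr]\leq \varepsilon\, c_H\bigl[\|u_0\|_V+\|f\|_{L^2(0,T;H)}\bigr]\]
for all $\Lambda$ with $|\Lambda|\leq\delta$, yielding in particular the desired $L^2(0,T;V)$ control. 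Summing the two bounds gives the corollary.

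The only genuinely delicate point — and where I expect the plan to need the most care — is arguing that Theorem~\ref{Thm: uniform convergence for homogenuous case in Banach space}, stated for a fixed sequence $(A_n)$, actually produces a uniform $\delta$-threshold valid for \emph{all} subdivisions $\Lambda$ of fineness $\leq\delta$. This reduces to inspecting the proof of Theorem~\ref{Thm: uniform convergence for inhomogenuous case in Banach space}: the whole argument there depends on $n$ only through the quantity controlling $(H_3)$, namely $\|\A_\Lambda(t)-\A(t)\|_{\L(V,V')}$, which in our situation is bounded by $C\omega(2|\Lambda|)$ uniformly in $t$ and in $\Lambda$. A standard contradiction argument (or a direct rerun of Theorem~\ref{Thm: uniform convergence for inhomogenuous case in Banach space}'s proof parametrised by $|\Lambda|$ instead of $n$) then upgrades the sequential statement to the uniform-in-$\Lambda$ statement needed here.
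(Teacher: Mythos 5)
Your proposal is correct and follows essentially the same route as the paper: the $H^1(0,T;H)$ part is read off from Theorem~\ref{convergence uniform}, the $L^2(0,T;V)$ part comes from Theorem~\ref{Thm: uniform convergence for homogenuous case in Banach space} applied with $D=V$, $X=V'$ (where $(H_3)$ holds with $\eta=0$ thanks to $\|\A_\Lambda(t)-\A(t)\|_{\L(V,V')}\le C\,\omega(2|\Lambda|)$), and condition~(\ref{Condition supplementary 2}) makes all error terms vanish uniformly in $\Lambda$. Your closing remark on upgrading the sequential statement of Section~\ref{Banach space case} to a uniform-in-$\Lambda$ threshold is in fact handled more explicitly than in the paper, which simply invokes the theorem; the observation that the proof depends on $n$ only through the quantity controlling $(H_3)$ is the right justification.
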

Now we give the proof of Theorem \ref{convergence uniform}:
\begin{proof}(\textit{Proof of Theorem \ref{convergence uniform}}.) We will use the representation formula (\ref{Formula for the solution}), notations (\ref{Formula for the solution 2})-(\ref{Formula for the solution 3}) and the the corresponding quantities for the solution $u$ of  (\ref{Abstract Cauchy problem}).  We proceed by several steps.
\par $a)$ First, we estimate  $\A_\Lambda u_{\Lambda,1}-\A u_{1}$  in $L^2(0,T,H).$ Let $t\ne 0.$ We obtain using the second estimate in Proposition \ref{Lemma:Dini condition approximation operators} and the estimates (\ref{Eq2: estimation  semigroup}) and  (\ref{analytic estimation in V}) in Proposition \ref{lemma: estimations for general from} that
\begin{align}
\nonumber\|\A_\Lambda(t) u_{\Lambda,1}(t)-&\A(t)u_{1}(t)\|_H=\|\A_\Lambda(t)e^{-t \A_\Lambda(t)}u_0-\A(t)e^{-t \A(t)}u_0\|_H
\\\nonumber&\leq  \|e^{-t \A_\Lambda(t)}[\A_\Lambda(t)u_0-\A(t)u_0]\|_H+
\|[e^{-t \A_\Lambda(t)}-e^{-t \A(t)}]\A_\Lambda(t)u_0\|_H
\\\nonumber&=\|e^{-t \A_\Lambda(t)}[\A_\Lambda(t)u_0-\A(t)u_0]\|_H+\int_0^t \|e^{-(t-s) \A_\Lambda(t)}(\A_\Lambda(t)-\A(t))e^{-s \A(t)}u_0\|_H
\\&\label{eq1: proof of Theorem on convergence uniform}  \leq 2c\omega(2|\Lambda|)\left(\frac{1}{t^{\gamma/2}}+c\int_0^t\frac{1}{s^{\gamma/2}}\rm{ ds}\right)\|u_0\|_{V}.
\end{align}
Similarly, combining the estimates (\ref{Eq1: estimation resolvent}) and (\ref{Eq3: estimation resolvent}) in Proposition \ref{lemma: estimations for general from} and  the estimate (\ref{Eq2:Dini condition operators}) in Proposition \ref{lemma: estimations for general from} we obtain
\begin{align}\nonumber
\|\A_\Lambda(t) &u_{\Lambda,2}(t)-\A(t)u_{2}(t)\|_H
\\\nonumber\leq\int_0^t& \|[\A_\Lambda(t)e^{-(t-s)A_\Lambda(t)}-\A(t)e^{-(t-s)A(t)}]f(s)\|_Hds
\\\nonumber&\leq\frac{1}{2\pi}\int_{0}^t\int_{\Gamma}\mid\lambda\mid e^{-(t-s)\Re\lambda}\|(\lambda-\A_\Lambda(t))^{-1}(\A_\Lambda(t)-\A(t))(\lambda-\A(t))^{-1}f(s)\|_{H}{\rm  d}\lambda{\rm  d}s
									\\\nonumber&\leq \frac{1}{\pi}\int_{0}^t\omega(2|\Lambda|)\int_{\Gamma}e^{-(t-s)\Re\lambda}
                  \frac{c^2}{\mid\lambda\mid^{\frac{1-\lambda}{2}}}\|f(s)\|_{H}{\rm  d}\lambda{\rm  d}s
									\\\nonumber&=\frac{c^2\omega(2|\Lambda|)}{\pi}\int_{0}^t \|f(s)\|_{H}\int_{0}^\infty
                  \frac{e^{-(t-s)r\cos(\nu)}}{r^{\frac{1-\gamma}{2}}}{\rm  d}r{\rm  d}s
							\\	\nonumber&=\frac{c^2\omega_\Lambda(t)}{2\pi}\int_{0}^t \|f(s)\|_{H}\int_{0}^\infty\frac{e^{-\rho\cos(\theta)}}{(\frac{\rho}{t-s})^{-\frac{1+\gamma}{2}}}{{\rm  d}\rho}{\rm  d}s
\\&=\label{eq2: proof of Theorem on convergence uniform}
\frac{c^2\omega(2|\Lambda|) }{\pi}\int_{0}^\infty
                  \frac{e^{-\rho\cos(\nu)}}{\rho^{\frac{1-\gamma}{2}}}{\rm  d\rho}\int_{0}^t\|f(s)\|_{H}(t-s)^{-\frac{1+\gamma}{2}}{\rm  d}s.
\end{align}
The last integral is well defined since the function $h:\R\to \R$ given by $h(t)=t^{-\frac{1+\gamma}{2}}$ for $t\in]0,T]$ and $h(t)=0$ for $t\in ]-\infty,0]\cap]T,+\infty[$  belongs to $L^1(\R)$ because $\frac{1+\gamma}{2}<1.$ The estimates (\ref{eq1: proof of Theorem on convergence uniform}) and (\ref{eq2: proof of Theorem on convergence uniform}) yield, respectively,
\begin{equation}
\|\A_\Lambda u_{\Lambda,1}-\A u_1\|_{L^2(0,T,H)}\leq \textbf{c} \omega(2|\Lambda|)\|u_0\|_{V}
\end{equation}
and
\begin{equation}\label{estimation error Au2-Au2lambda}
\|\A_\Lambda u_{\Lambda,2}-\A u_2\|_{L^2(0,T,H)}\leq \textbf{c} \omega(2|\Lambda|)\|f\|_{L(0,T,H)}
  \end{equation}
for a positive constant $\textbf{c}>0$  that depends only on $M,\alpha,\gamma$ and $c_H.$
\par\noindent $b)$ Next, we  prove  the following estimate
\begin{equation}\label{estimation of the error of the convergence for Q}
\|Q_\Lambda -Q\|_{\L(L^2(0,T,H))}\leq c\Big[\omega(2|\Lambda|)+\frac{\omega(2|\Lambda|)}{|\Lambda|^{\gamma/2}}+\int_0^{2|\Lambda|}\frac{\omega(s)}{s^{1+\gamma/2}}\Big]
\end{equation}
 where $Q:L^2(0,T,H)\longrightarrow L^2(0,T,H)$ is defined via formula which is analogous to (\ref{def l operator Q}). To this end,  for  $g\in L^2(0,T,H)$ and $t\in [0,T]$ we write 
	
	\begin{align*}\label{eq1: proof strong conv Qlambda}
	\|(Q_\Lambda g)(t)&-(Q g)(t)\|_H\\&\leq \int_0^t\|\A_\Lambda(t)e^{-(t-s)\A_\Lambda(t)}(\A_\Lambda(t)-\A_\Lambda(s))(A^{-1}_\Lambda(s)-A^{-1}(s))g(s)\|_H ds
	\\&\quad + \int_0^t\|\A_\Lambda(t)e^{-(t-s)\A_\Lambda(t)}(\A_\Lambda(t)-\A(t)-\A_\Lambda(s)+\A(s))A^{-1}(s)g(s)\|_H ds
	\\&\quad + \int_0^t\|(\A_\Lambda(t)e^{-(t-s)\A_\Lambda(t)}-\A(t)e^{-(t-s)A(t)})(\A(t)-\A(s))A^{-1}(s)g(s)\|_H ds
\\&=I_{\Lambda,1}(t)+I_{\Lambda,2}(t)+I_{\Lambda,3}(t)
	\end{align*}
Replacing $\A(s)$ by $\A(s)+\mu$ and according to Proposition \ref{lemma: estimations for general from} we may assume $\|\A^{-1}_\Lambda(s)\|_{\L(V_\gamma',V)}\leq c$ and $\|\A^{-1}_\Lambda(s)\|_{\L(H,V)}\leq c.$ Next, by the estimates (\ref{Eq2: estimation  semigroup}) and (\ref{analytic estimation}) in Proposition \ref{lemma: estimations for general from}  together with   (\ref{Eq: Dini condition operators}) and (\ref{Eq2:Dini condition operators}), we have
\begin{align*}
I_{\Lambda,1}(t)=&\int_0^t\|\A_\Lambda(t)e^{-\frac{t-s}{2}\A_\Lambda(t)}e^{-\frac{t-s}{2}\A_\Lambda(t)}(\A_\Lambda(t)-\A_\Lambda(s))
(\A^{-1}_\Lambda(s)-\A^{-1}(s))g(s)\|_H ds
\\&\leq 2^{1+\gamma/2}c^2\int_{0}^t \frac{\omega_\Lambda(t-s)}{(t-s)^{1+\gamma/2}}\|(\A^{-1}_\Lambda(s)-\A^{-1}(s))g(s)\|_Vds
\\&=2^{1+\gamma/2}c^2\int_0^t\frac{\omega_\Lambda(t-s)}{(t-s)^{1+\gamma/2}}\|(\A^{-1}_\Lambda(s)(\A_\Lambda(s)-\A(s))\A^{-1}(s))g(s)\|_Vds
\\&\leq2^{3+\gamma/2}c^2 \omega(2|\Lambda|)\int_0^t \frac{\omega_\Lambda(t-s)}{(t-s)^{1+\gamma/2}}\|\A^{-1}_\Lambda(s)\|_{\L(V_\gamma',V)}\|\A^{-1}(s)\|_{\L(H,V)}\|g(s)\|_{H}ds
\\&\leq 2^{3+\gamma/2}c^4\omega(2|\Lambda|)\int_0^t \frac{\omega_\Lambda(t-s)}{(t-s)^{1+\gamma/2}}\|g(s)\|_{H}ds
\\&=2^{3+\gamma/2}c^4\omega(2|\Lambda|) (h_\Lambda\ast\|g(\cdot)\|_{H})(t),
\end{align*}
where $h_{\Lambda}(t):=\omega_\Lambda(t)t^{-1-\gamma/2}$ for $t\in[0,T]$ and $h_{\Lambda}(t):=0$ for $t\in (-\infty,0[\cap]T,+\infty).$  Proposition \ref{Prop: Dini condition for Linear-approximation} implies  that  $h_\Lambda\in L^1(\R)$ and that $\| h_\Lambda\|_{L^1(\R)}$ is bounded uniformly with respect to the subdivision $\Lambda.$ Therefore we obtain
\begin{equation}
\int_0^T I_{\Lambda,1}^2(s){\rm d}s\leq \textbf{c} \omega(2|\Lambda|)^2\int_0^T\|g(s)\|_{H}^2ds
\end{equation}
where the  positive constant $\textbf{c}>0$ is independent of $\Lambda.$
\par\noindent Again using as above the estimates (\ref{Eq2: estimation  semigroup}) and (\ref{analytic estimation}) in Proposition \ref{lemma: estimations for general from}, we obtain for the second term $I_{\Lambda,2}$
\begin{align*}
&I_{\Lambda,2}(t):=\int_0^t\|\A_\Lambda(t)e^{-(t-s)\A_\Lambda(t)}(\A_\Lambda(t)-\A(t)-\A_\Lambda(s)+\A(s))A^{-1}(s)g(s)\|_H ds
\\&\leq 2^{1+\gamma/2}c^3 \int_{0}^t \|(\A_\Lambda(t)-\A(t)-\A_\Lambda(s)+\A(s))\|_{\L(V_\gamma',H)}
\frac{\|g(s)\|_{H}}{(t-s)^{1+\gamma/2}}{\rm  d}s
\\&\leq 2^{1+\gamma/2}c^3\int_{0}^t \kappa_\Lambda(t-s)\|g(s)\|_{H}{\rm  d}s
\end{align*}
where
\begin{equation*}\label{Kappa}\kappa_{\Lambda}
(t):=\left\{%
\begin{array}{ll}
     [\omega(t)+\omega_\Lambda(t)]t^{-(1+\frac{\gamma}{2})} & \hbox{ if } \  0\leq  t<2|\Lambda|,\\
    4\omega(2|\Lambda|)t^{-(1+\frac{\gamma}{2})}& \hbox{ if } \  2|\Lambda|< t\leq 2T, \\
   0& \hbox{ if } \   t\in ]-\infty,0]\cap]2T,+\infty[. \\
\end{array}%
\right. \end{equation*}
Here we have used simultaneously both estimates (\ref{Eq: Dini condition operators}) and (\ref{Eq2:Dini condition operators}) from Lemma \ref{Lemma:Dini condition approximation operators}. Because of  (\ref{eq 2:Dini-condition}) and (\ref{eq 2:Dini-condition for Linear-approximation}), the function
$t\mapsto k_{\Lambda}(t)$ belongs to $L^1(\R)$, and by a simple calculation we obtain

\[\|\kappa_\Lambda\|_{L^1(\R)}\leq {\bf c}\Big(\frac{\omega(2|\Lambda|)}{|\Lambda|^{\gamma/2}}+\int_0^{2|\Lambda|}\frac{\omega(s)}{s^{1+\gamma/2}}{\rm d}s\Big)\]
and therefore,
\begin{equation}
\int_0^T I_{\Lambda,2}^2(s){\rm d}s\leq  {\bf c}\Big(\frac{\omega(2|\Lambda|)}{|\Lambda|^{\gamma/2}}+\int_0^{2|\Lambda|}\frac{\omega(s)}{s^{1+\gamma/2}}{\rm d}s\Big)^2\int_0^T\|g(s)\|_{H}^2ds
\end{equation}

\noindent for a constant ${\bf c}={\bf c}(M,\alpha,c_H,\gamma)>0$ independent of $\Lambda.$
\par\noindent $b)$ For the last term $I_{\Lambda,3}(t),$ we set $\tilde{g}(t,\cdot):=(\A(t)-\A(\cdot))A^{-1} (\cdot)g(\cdot).$ Again by  Lemma \ref{Lemma:Dini condition approximation operators} and  (\ref{Eq4: estimation resolvent}) and (\ref{Eq6: estimation resolvent}) from Proposition \ref{lemma: estimations for general from} and we obtain
\begin{align*}
&I_{\Lambda,3}(t):=\int_0^t\|(\A_\Lambda(t)e^{-(t-s)\A_\Lambda(t)}-\A(t)e^{-(t-s)A_\Gamma(t)})\tilde{g}(t,s)\|_H ds
\\&\leq\frac{1}{2\pi}\int_{0}^t\int_{\Gamma}\mid\lambda\mid e^{-(t-s)\Re\lambda}
                  \|(\lambda-\A_\Lambda(t))^{-1}(\A_\Lambda(t)-\A(t))(\lambda-\A(t))^{-1}\tilde{g}(t,s)\|_{H}{\rm  d}\lambda{\rm  d}s
\\&\leq \frac{1}{2\pi}\int_{0}^t\int_{\Gamma}\mid\lambda\mid e^{-(t-s)\Re\lambda}
                  \|(\lambda-\A_\Lambda(t))^{-1}\|_{\L(V',H)}\|(\A_\Lambda(t)-\A(t))(\lambda-\A(t))^{-1}\tilde{g}(t,s)\|_{V'}{\rm  d}\lambda{\rm  d}s
\\&\leq\frac{ C_{V_\gamma '}}{2\pi}\int_{0}^t\int_{\Gamma}\mid\lambda\mid e^{-(t-s)\Re\lambda}
                  \frac{c2\omega(2|\Lambda|)}{(1+|\lambda|)^{1/2}}\|(\lambda-\A(t))^{-1}\|_{\L(V_\gamma',V)}\|\tilde{g}(t,s)\|_{V_\gamma'}{\rm  d}\lambda{\rm  d}s
	\\&\leq \omega(2|\Lambda|)\frac{ C_{V_\gamma '}c^2}{\pi}\int_{0}^t\int_{\Gamma}
                  \frac{\mid\lambda\mid e^{-(t-s)\Re\lambda}}{(1+|\lambda|)^{1-\frac{\gamma}{2}}}\|\tilde{g}(t,s)\|_{V_\gamma'}{\rm  d}\lambda{\rm  d}s
	\\&\leq\omega(2|\Lambda|)\frac{ C_{V_\gamma '}c^2}{\pi}\int_{0}^t\int_{0}^\infty
                  r^{\frac{\gamma}{2}} e^{-(t-s)r\cos(\nu)}\|\tilde{g}(t,s)\|_{V_\gamma'}{\rm  d}r{\rm  d}s
\end{align*}
where $C_{V_\gamma '}$ is the injection constant of $V_\gamma '$ into $V'.$ Next, since
\[\|\tilde{g}(t,s)\|_{V_\gamma'}\leq \omega(t-s)\|A^{-1}(t)\|_{\L(H,V)}\|g(s)\|_H,\]we conclude that

\begin{align*}
I_{\Lambda,3}(t)&\leq \omega(2|\Lambda|)\frac{ C_{V_\gamma '}c^2}{\pi}\int_{0}^\infty\frac{e^{-\rho\cos(\nu)}}{\rho^{-\gamma/2}}{\rm  d}\rho \int_{0}^t\frac{\omega(t-s)}{(t-s)^{1+\frac{\gamma}{2}}}\|g(s)\|_{H}{\rm  d}s
\\&=\omega(2|\Lambda|)\frac{ C_{V_\gamma '}c^2}{\pi}\int_{0}^\infty\frac{e^{-\rho\cos(\nu)}}{\rho^{-\gamma/2}}{\rm  d}\rho(h\ast\|g(\cdot)\|_{H})(t)
\end{align*}
where $h:\R\longrightarrow \R$ is defined analogously as $h_\Lambda$ above. Taking into account (\ref{eq 2:Dini-condition}), it follows
\begin{equation}
\int_0^T I_{\Lambda,3}^2(s){\rm d}s\leq {\bf c} \omega(2|\Lambda|)^2\int_0^T\|g(s)\|_{H}^2ds
\end{equation}
for a constant $\bf c>0$ independent of $\Lambda,$ and  thus the desired estimate (\ref{estimation of the error of the convergence for Q}) is proved.
%
\par\noindent $c)$ Finally, by using  Lemma \ref{Lemma: The uniform  Boundedness in $L^2(0,T,H)$} we conclude from $a)-b)$ that
\begin{align*}
 \nonumber\|\A_\Lambda u_\Lambda-\A u\|_{L^2(0,T,H)}
&\leq \|(I-Q_\Lambda)^{-1}(\A_\Lambda u_{\Lambda,1}-\A u_{1})\|_{L^2(0,T,H)}
 \\\nonumber&\quad+\|(I-Q_\Lambda)^{-1}[\A_\Lambda u_{\Lambda,2}-\A u_{2}]\|_{L^2(0,T,H)}
 \\\nonumber&\qquad+\|(I-Q_\Lambda)^{-1}(Q-Q_\Lambda)(I-Q)^{-1}(\A u_{1}+\A u_{2})\|_{L^2(0,T,H)}
 \\&\label{estimation final 1}\leq {\bf c} \Big[\omega(2|\Lambda|)+\frac{\omega(2|\Lambda|)}{|\Lambda|^{\gamma/2}}+\int_0^{2|\Lambda|}\frac{\omega(s)}{s^{1+\gamma/2}}{\rm d}s\Big]\Big[\|u_0\|_{V}+\|f\|_{L^2(0,T,H)}\Big]
\end{align*}
where ${\bf c}>0$ is independent of $\Lambda.$ Further, since $u$ and $u_\Lambda$ satisfy  (\ref{nCP in V'}) and (\ref{Abstract Cauchy problem}), respectively,  we have
  \begin{equation}\label{estimation final 2}
   \|\dot u_\Lambda-\dot u\|_{L^2(0,T,H)} \leq c \Big[\omega(2|\Lambda|)+\frac{\omega(2|\Lambda|)}{|\Lambda|^{\gamma/2}}
	+\int_0^{2|\Lambda|}\frac{\omega(s)}{s^{1+\gamma/2}}{\rm d}s\Big]\Big[\|u_0\|_{V}+\|f\|_{L^2(0,T,H)}\Big].
	\end{equation}
	Now since $u(t)$ and $u_\Lambda(t)$ belong to $V$ for almost every $t\in [0,T],$ we have
	\[u_\Lambda(t)=u_\Lambda(0)+\int_0^t \dot u_\Lambda(s){\rm d}s \quad \text{ and  }
u(t)=u(0)+\int_0^t \dot u(s){\rm d}s\] almost everywhere. This completes the proof.
 \end{proof}

\section{Continuity of solutions}
 
%
Assume that  $H,V$ and $\fra:[0,T]\times V\times V\longrightarrow \C$ are as in the previous section. 
The aim of the this section is the prove that $(u_\Lambda)_{\Lambda}$ converges to $u$ in the space $C([0,T],V)$ uniformly on $(f,u_0)$ provided that (\ref{Condition supplementary 2}) holds. Note that $u_\Lambda\longrightarrow u$  in $C([0,T],H)$ since $MR(V,V')$ is continuously embedded into $C(0,T,H).$

\begin{theorem}\label{continuity of the solution}
 Assume that $\fra:[0,T]\times V\times V\to \C$   satisfies  (\ref{eq:continuity-nonaut})-(\ref{eq 3:Dini-condition}) with $D(A(0)^{1/2})=V.$	Let $\Gamma$ be a another subdivision of $[0,T]$ that is finer than $\Lambda.$ Then
\begin{equation}\label{estimation of the error of the convergence2}
\|u_\Gamma-u_\Lambda\|_{C(0,T,V)}\leq \textbf{c}\Big(\omega(2|\Lambda|)+\frac{\omega(2|\Gamma|)}{|\Gamma|^{\gamma/2}}+\frac{\omega(2|\Lambda|)}{|\Lambda|^{\gamma/2}}+\int_0^{2|\Lambda|}\frac{\omega(s)}{s^{1+\gamma/2}}{\rm d}s\Big)\Big[\|u_0\|_V+\|f\|_{L^2(0,T,H)}\Big]
\end{equation}
for some positive constant $\textbf{c}>0$ depending only on $M,\alpha,\gamma$ and $c_H.$
\end{theorem}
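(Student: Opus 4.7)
I would compare $u_\Lambda$ and $u_\Gamma$ directly via the Acquistapace--Terreni representation formula~(\ref{Formula for the solution}) applied to both, writing
\[
u_\Lambda(t) - u_\Gamma(t) = \bigl(u_{\Lambda,1}(t)-u_{\Gamma,1}(t)\bigr) + \bigl(u_{\Lambda,2}(t)-u_{\Gamma,2}(t)\bigr) + \bigl(u_{\Lambda,3}(t)-u_{\Gamma,3}(t)\bigr),
\]
and estimate each piece pointwise in the $V$-norm. This is a pointwise $V$-norm analogue of the proof of Theorem \ref{convergence uniform}, where we used the $L^2(0,T,H)$-norm on $\A_\Lambda u_\Lambda-\A u$. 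The key observation is that since $\Gamma$ refines $\Lambda$, Lemma \ref{Lemma:Dini condition approximation operators} yields
\[
\|\A_\Lambda(t)-\A_\Gamma(t)\|_{\L(V,V_\gamma')} \leq 2\omega(2|\Lambda|)+2\omega(2|\Gamma|)\leq 4\omega(2|\Lambda|).
\]

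For the first piece I apply the Duhamel identity $e^{-tB_1}-e^{-tB_2}=\int_0^t e^{-(t-s)B_1}(B_2-B_1)e^{-sB_2}\,ds$, bound the outer semigroup by estimate (\ref{Eq3: estimation  semigroup}) in $\L(V_\gamma',V)$, bound $\A_\Gamma(t)-\A_\Lambda(t)$ by $4\omega(2|\Lambda|)$ in $\L(V,V_\gamma')$, and bound the inner semigroup in $\L(V)$ via (\ref{analytic estimation in V}); since the resulting singularity $(t-s)^{-(1+\gamma)/2}$ is integrable, this gives $\|u_{\Lambda,1}(t)-u_{\Gamma,1}(t)\|_V\leq c\omega(2|\Lambda|)\|u_0\|_V$. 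For the second piece I pass to the contour representation (\ref{analytic representation}), apply the resolvent identity $(\mu-B_1)^{-1}-(\mu-B_2)^{-1}=(\mu-B_1)^{-1}(B_1-B_2)(\mu-B_2)^{-1}$ with the bounds (\ref{Eq1: estimation resolvent})--(\ref{Eq6: estimation resolvent}), and convolve with $f$ exactly as in (\ref{eq2: proof of Theorem on convergence uniform}) but keeping $t$ fixed; this gives $\|u_{\Lambda,2}(t)-u_{\Gamma,2}(t)\|_V\leq c\omega(2|\Lambda|)\|f\|_{L^2(0,T,H)}$.

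The main work is the third piece, which I split as
\begin{align*}
u_{\Lambda,3}(t)-u_{\Gamma,3}(t) ={}& \int_0^t e^{-(t-s)\A_\Lambda(t)}(\A_\Lambda(t)-\A_\Lambda(s))\bigl(u_\Lambda(s)-u_\Gamma(s)\bigr)\,ds \\
&+ \int_0^t e^{-(t-s)\A_\Lambda(t)}\bigl[(\A_\Lambda-\A_\Gamma)(t)-(\A_\Lambda-\A_\Gamma)(s)\bigr]u_\Gamma(s)\,ds \\
&+ \int_0^t \bigl[e^{-(t-s)\A_\Lambda(t)}-e^{-(t-s)\A_\Gamma(t)}\bigr](\A_\Gamma(t)-\A_\Gamma(s))u_\Gamma(s)\,ds.
\end{align*}
The second integral I bound by splitting $|t-s|\lessgtr 2|\Lambda|$: on the short range I use the separate estimates $\|(\A_\Lambda-\A)(t)\|_{\L(V,V_\gamma')}\leq 2\omega(2|\Lambda|)$ together with the $\omega/s^{1+\gamma/2}$ integral, producing the term $\int_0^{2|\Lambda|}\omega(s)/s^{1+\gamma/2}\,ds$ plus the residual $\omega(2|\Lambda|)/|\Lambda|^{\gamma/2}$; on the long range the modulus bound is $4\omega(2|\Lambda|)$ which yields an $O(\omega(2|\Lambda|))$ contribution. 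The third integral is handled by Duhamel on the semigroup difference combined with the sharper $\omega_\Gamma$-bound from Proposition \ref{Prop: Dini condition for Linear-approximation}; the near-diagonal behaviour of $\omega_\Gamma$, which is linear of slope $\omega(4|\Gamma|)/|\Gamma|$, produces (after integrating against the $(t-s)^{-(1+\gamma)/2}$ singularity) the term $\omega(2|\Gamma|)/|\Gamma|^{\gamma/2}$. In both integrals I bound $\|u_\Gamma(s)\|_V$ uniformly using the representation formula for $u_\Gamma$ and the $C([0,T],V)$-regularity established in the previous section.

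Finally, the first integral is self-referential: setting $\phi(t):=\|u_\Lambda(t)-u_\Gamma(t)\|_V$ and collecting all previous bounds into a single function $F(t)$, I obtain the Volterra inequality
\[
\phi(t) \leq F(t) + C\int_0^t \frac{\omega_\Lambda(t-s)}{(t-s)^{1+\gamma/2}}\,\phi(s)\,ds,
\]
whose kernel is $L^1$ uniformly in $\Lambda$ by~(\ref{eq 2:Dini-condition for Linear-approximation}). A singular Gronwall argument---equivalently, the invertibility of the Volterra operator analogous to $I-Q_\Lambda$ used in Lemma \ref{Lemma: Invertibility of I-QLambda}, now acting on $C([0,T],V)$ rather than $L^2(0,T,H)$---then yields $\|\phi\|_\infty \leq c\|F\|_\infty$, giving the stated estimate. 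The hardest technical step will be the third integral $\mathcal J_3$: combining the semigroup-difference Duhamel identity with the singular behaviour of $\omega_\Gamma$ near $0$ so that the two singularities compensate and produce exactly the $\omega(2|\Gamma|)/|\Gamma|^{\gamma/2}$ contribution of the final bound.
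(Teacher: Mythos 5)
Your proposal is correct and follows essentially the same route as the paper: the representation formula (\ref{Formula for the solution}), pointwise $V$-norm estimates of the first two differences via the resolvent/semigroup bounds of Proposition \ref{lemma: estimations for general from} together with (\ref{Eq2:Dini condition operators}), the split of the third difference into the kernel-difference terms controlled by $\kappa_{\Lambda,\Gamma}$ and the semigroup-difference term, and inversion of $I-P_\Lambda$ on $C(0,T,V)$ (your Volterra--Gronwall step is exactly the paper's Neumann-series argument after the $\mu$-shift). The only cosmetic differences are your use of the Duhamel identity where the paper writes the contour integral, and a slight misattribution of which of your integrals produces the $\omega(2|\Gamma|)/|\Gamma|^{\gamma/2}$ term (it comes from the near-diagonal part of $\omega_\Gamma$ in your second integral, not the third); neither affects correctness.
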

\begin{proof}

We will use the notation of the the previous sections and we will proceed, as in the proof of Theorem
\ref{convergence uniform}, in several steps. To this end, we will adapt the proof of \cite[Theorem 4.4]{Ar-Mo15} to our situation.

\par\noindent \textit{Step a:}  By using  (\ref{Eq2: estimation resolvent}) and (\ref{Eq6: estimation resolvent}) in Proposition \ref{lemma: estimations for general from} for $(\lambda-\A_\Gamma(t))^{-1}$ and $(\lambda-\A_\Lambda(t))^{-1},$ respectively, and (\ref{Eq2:Dini condition operators}) we obtain  for every $t\in[0,T]$ that
\begin{align*}
\|u_{1,\Lambda}(t)&-u_{1,\Gamma}(t)\|_V\leq \frac{1}{2\pi}\int_\Gamma e^{-t\Re \lambda}\|(\lambda-\A_\Lambda(t))^{-1}(\A_\Lambda(t)-\A_\Gamma (t))(\lambda-\A_\Gamma(t))^{-1}u_0\|_V d\lambda
\\&\leq\frac{2c^2\omega(2|\Lambda|)}{\pi}\int_\Gamma \frac{e^{-t\Re \lambda}}{(1+|\lambda|)^\frac{3-\gamma}{2}}d\lambda \|u_0\|_V
\\&\leq \frac{2c^2\omega(2|\Lambda|)}{\pi}\Big(\int_0^\infty \frac{1}{(1+r)^\frac{3-\gamma}{2}}d r\Big) \|u_0\|_V.
\end{align*}

\par\noindent \textit{Step b:} Again  the estimates  (\ref{Eq4: estimation resolvent}) and  (\ref{Eq6: estimation resolvent}) in Proposition \ref{lemma: estimations for general from} and formula (\ref{Eq2:Dini condition operators}) imply that
\[\|(\lambda-\A_\Lambda(t))^{-1}(\A_\Lambda(t)-\A_\Gamma(t))(\lambda-\A_\Gamma(t))^{-1}f(s)\|_V\leq 2c^2\omega(2|\Lambda|) \frac{\|f(s)\|_H}{(1+|\lambda|)^{1-\frac{\gamma}{2}}}.\]
Therefore, we obtain by using  Fubini's theorem that
for all $\lambda\in \Gamma\setminus\{0\}$
\begin{align*}
\|u_{2,\Lambda}(t)&-u_{2,\Gamma}(t)\|_V=\|\int_0^t\frac{1}{2i\pi}\int_\Gamma e^{-(t-s) \lambda}(\lambda-\A_\Lambda(t))^{-1}(\A_\Lambda(t)-\A_\Gamma(t))(\lambda-\A_\Gamma(t))^{-1}f(s) d\lambda ds\|_V
\\&\leq \frac{c^2\omega(2|\Lambda|)}{\pi}\int_0^\infty \frac{1}{(1+r)^{1-\frac{\gamma}{2}}}\Big(\int_0^t e^{-(t-s)r\cos(\nu)}\|f(s)\|_H ds\Big) dr
\\&\leq \frac{c^2\omega(2|\Lambda|)}{\pi}\|f\|_{L^2(0,T,H)}\int_0^\infty \frac{1}{(1+r)^{1-\frac{\gamma}{2}}}\Big(\int_0^t e^{-2(t-s)r\cos(\nu)}ds\Big)^{1/2}  dr
\\&\leq \frac{c^2\omega(2|\Lambda|)}{\sqrt{2\cos(\nu)}}\Big(\int_0^\infty \frac{1}{\sqrt{r}(1+r)^{1-\frac{\gamma}{2}}}dr\Big)\|f\|_{L^2(0,T,H)}.
\end{align*}

\par\noindent\textit{Step c:} It remains to estimate $\|u_{3,\Lambda}(\cdot)-u_{3,\Gamma}(\cdot)\|_V.$ For this for each $h\in C(0,T,V)$ we set
\[(P_\Lambda h)(t):=\int_{0}^te^{-(t-s)A_\Lambda(t)}(\A_\Lambda(t)-\A_\Lambda(s))h(s){\rm  d}s.\]
From \cite[Lemma 4.5]{Ar-Mo15} we have $P_\Lambda h\in C(0,T,V).$ Thanks to Proposition  \ref{Prop: Dini condition for Linear-approximation}
and Lemma \ref{Lemma:Dini condition approximation operators} one can prove in a similar way as in Step 3 of the proof of \cite[Theorem 4.4]{Ar-Mo15} (see also Step 3 of the proof of Lemma \ref{Lemma: Invertibility of I-QLambda}) that $\|P_\Lambda\|_{\L(C(0,T,H)}\leq 1/2$ and thus $I-P_\Lambda$ is invertible on $\L(C(0,T,X)).$ Therefore, we obtain by using  the representation formula (\ref{Formula for the solution})
\begin{align} \label{Eq 1:Theorem: proof continuity}
u_{\Lambda,3}-u_{\Gamma,3}=(I-P_\Lambda)^{-1}&(u_{\Lambda,1}-u_{\Gamma,1})+(I-P_\Gamma)^{-1}(u_{\Lambda,2}-u_{\Gamma,2})\\&\qquad +
[(I-P_\Lambda)^{-1}(P_\Lambda-P_\Gamma)(I-P_\Gamma)^{-1}(u_{\Gamma,1}+u_{\Gamma,2})
\end{align}
\par\noindent The term on the right hand side of (\ref{Eq 1:Theorem: proof continuity}) is treated in \textit{Step} a)-b). We need only to  estimate the difference $P_\Lambda-P_\Gamma$ on $\L(C(0,T,V)).$  For each $h\in C(0,T,V)$ and $t\in [0,T]$ we have
\begin{align*}
(&P_\Lambda h -P_\Gamma h)(t)\\&=\int_{0}^te^{-(t-s)A_\Lambda(t)}\big[\A_\Lambda(t)-\A_\Lambda(s)-\A_\Gamma(t)+\A_\Gamma(s)\big]h(s){\rm  d}s
\\& \qquad\qquad +\int_{0}^t\big[e^{-(t-s)A_\Lambda(t)}-e^{-(t-s)A_\Gamma(t)}\big](\A_\Gamma(t)-\A_\Gamma(s))h(s){\rm  d}s.
\\&=\int_{0}^te^{-(t-s)A_\Lambda(t)}\big[\A_\Lambda(t)-\A_\Lambda(s)-\A_\Gamma(t)+\A_\Gamma(s)\big]h(s){\rm  d}s
\\&+\int_{0}^t\frac{1}{2i\pi}\int_\Gamma e^{-(t-s)\lambda}(\lambda-\A_\Lambda)^{-1}(A_\Lambda(t)-A_\Gamma(t))(\lambda-\A_\Gamma)^{-1}(\A_\Gamma(t)-\A_\Gamma(s))h(s){\rm  d}s.
\end{align*}
By  the estimate  (\ref{Eq6: estimation resolvent}) in Proposition \ref{lemma: estimations for general from} and the formula (\ref{Eq2:Dini condition operators}),
\begin{align*}
\|(\lambda-\A_\Lambda)^{-1}(A_\Lambda(t)-&A_\Gamma(t))(\lambda-\A_\Gamma)^{-1}(\A_\Gamma(t)-\A_\Gamma(s))h(s)\|_V
\\&\leq 4c^2\omega(2|\Lambda|)\frac{\omega_\Gamma(t-s)}{(1+|\lambda|)^{1-\gamma}}\|h(s)\|_V.
\end{align*}
Thus remarking that 
$\omega_\Gamma(t)\leq 2\omega(t)$ for every ${t\in[0,T]},$ it follows that
\begin{align*}
\|\int_{0}^t&\frac{1}{2i\pi}\int_\Gamma \big[e^{-(t-s)\lambda}(\lambda-\A_\Lambda)^{-1}(A_\Lambda(t)-A_\Gamma(t))(\lambda-\A_\Gamma)^{-1}\big](\A_\Gamma(t)-\A_\Gamma(s))h(s){\rm  d}s \|_V
\\&\leq 2c^2\omega(2|\Lambda|)\int_{0}^t\frac{1}{\pi}\int_0^\infty e^{-(t-s)r\cos(\theta)}\frac{\omega_\Lambda(t-s)}{r^{1-\gamma}}\|h(s)\|_V dr ds
\\&= 2c\omega(2|\Lambda|)\int_{0}^t\frac{1}{\pi}\int_0^\infty e^{-\rho\cos(\theta)}\frac{\omega_\Lambda(t-s)}{\rho^{1-\gamma}}(t-s)^{1-\gamma}\|h(s)\|_V \frac{d\rho}{t-s} ds
\\&\leq 2\omega(2|\Lambda|)\frac{c^2}{\pi}\int_{0}^t\Big(\int_0^\infty \frac{e^{-\rho\cos(\nu)}}{\rho^{1-\gamma}}d\rho\Big)\frac{\omega_\Gamma(t-s)}{(t-s)^{\gamma}}\|h(s)\|_V d\rho ds
\\& \leq {\bf c }\omega(2|\Lambda|) \|h\|_{C(0,T,V)}.
\end{align*}
for some constant ${\bf c}={\bf c}(\alpha, M,\gamma, T)$ independent of $\Gamma$ and $\Lambda.$ Next, writing
\begin{align*}e^{-(t-s)A_\Lambda(t)}&\big[\A_\Lambda(t)-\A_\Lambda(s)-\A_\Gamma(t)+\A_\Gamma(s)\big]h(s)
\\&=A^{-1/2}_\Lambda(t)A^{1/2}_\Lambda(t)e^{-(t-s)A_\Lambda(t)}\big[\A_\Lambda(t)-\A_\Lambda(s)-\A_\Gamma(t)+\A_\Gamma(s)\big]
\end{align*}
we obtain by (\ref{Eq3: estimation  semigroup}) and (\ref{Eq4: estimation  semigroup}) in Proposition \ref{lemma: estimations for general from} and since  $e^{-\cdot A_\Lambda(t)}$ is an analytic $C_0$-semigroup on $V$
\begin{align*}
&\int_{0}^t\|e^{-(t-s)A_\Lambda(t)}\big[\A_\Lambda(t)-\A_\Lambda(s)-\A_\Gamma(t)+\A_\Gamma(s)\big]h(s)\|_V{\rm  d}s
\\ &\leq  c^32^{1+\gamma/2}\int_{0}^t \kappa_{\Lambda,\Gamma}(s)\|h(t-s)\|_V{\rm  d}s
\end{align*}
where
\begin{equation*}\label{Kappa2}\kappa_{\Lambda}
(t):=\left\{%
\begin{array}{ll}
     [\omega_ \Gamma(t)+\omega_\Lambda(t)]t^{-(1+\frac{\gamma}{2})} & \hbox{ if } \  0\leq  t<2|\Lambda|,\\
    4\omega(2|\Lambda|)t^{-(1+\frac{\gamma}{2})}& \hbox{ if } \  2|\Lambda|< t\leq 2T, \\
   0& \hbox{ if } \   t\in ]-\infty,0]\cap]2T,+\infty[. \\
\end{array}%
\right. \end{equation*}
Because of  (\ref{eq 2:Dini-condition}) and (\ref{eq 2:Dini-condition for Linear-approximation}), the function
$t\mapsto k_{\Lambda,\Gamma}(t)$ belongs to $L^1(\R)$, and by a simple calculation we obtain

\[\|\kappa_\Lambda\|_{L^1(\R)}\leq \textbf{c}\Big(\frac{\omega(2|\Lambda|)}{|\Lambda|^{\gamma/2}}+\frac{\omega(2|\Gamma|)}{|\Gamma|^{\gamma/2}}+\int_0^{2|\Lambda|}\frac{\omega(s)}{s^{1+\gamma/2}}{\rm d}s\Big)\
\]
for some positive constant $\textbf{c}>0$ depending only on $M,\alpha, \gamma$ and $c_H.$ We conclude then
\[\|u_{3,\Lambda}(t)-u_{3,\Gamma}(t)\|_V\leq \textbf{c}\Big(\omega(2|\Lambda|)+\frac{\omega(2|\Lambda|)}{|\Lambda|^{\gamma/2}}+\frac{\omega(2|\Gamma|)}{|\Gamma|^{\gamma/2}}+\int_0^{2|\Lambda|}\frac{\omega(s)}{s^{1+\gamma/2}}{\rm d}s\Big)\Big[\|u_0\|_V+\|f\|_{L^2(0,T,H)}\Big].\]
for some positive constant $\textbf{c}>0$ (probably different from the previous one) depending only on $M,\alpha, \gamma$ and $c_H,$ and the proof is complete.

\end{proof}
We finish this section with our main result. 
\begin{theorem}\begin{enumerate}
\item  There exists a null sequence $(t_n)_{n\in\N}\subset [0,T]$ depending on $\omega$ such that for every $\varepsilon>0$ there exists $n_0\in\N$ such that for all $n\geq n_0$ one has
\[\|u_{\Gamma_n}-u_{\Lambda_n}\|_{C(0,T,V)}\leq \varepsilon \big[\|u_0\|_V+\|f\|_{L^2(0,T,H)}\big]\]
for all subdivision $\Lambda_n$ and $\Gamma_n$ of $[0,T]$ with $|\Gamma_n|<|\Lambda_n|=\frac{t_n}{2}.$
\item $u\in C([0,T],V)$ and there exists a null sequence $(t_n)_{n\in\N}\subset [0,T]$ depending on $\omega$ such that for every $\varepsilon>0$ there exists $n_0\in\N$ such that for all $n\geq n_0$ one has
\[\|u_-u_{\Lambda_n}\|_{C(0,T,V)}\leq \varepsilon \big[\|u_0\|_V+\|f\|_{L^2(0,T,H)}\big]\]
for all subdivision $\Lambda_n$ of $[0,T]$ with $|\Lambda_n|=\frac{t_n}{2}.$
\item Assume moreover that (\ref{Condition supplementary 2})  holds. Then $u_\Lambda\longrightarrow u$ in $C([0,T],V)$ uniformly on $u_0$ and $f$ as $|\Lambda|\to 0.$ More precisely,
\begin{equation*}
\|u-u_\Lambda\|_{C(0,T,V)}\leq \textbf{c}\Big(\omega(2|\Lambda|)+\frac{\omega(2|\Lambda|)}{|\Lambda|^{\gamma/2}}+\int_0^{2|\Lambda|}\frac{\omega(s)}{s^{1+\gamma/2}}{\rm d}s\Big)\Big[\|u_0\|_V+\|f\|_{L^2(0,T,H)}\Big]
\end{equation*}
for all subdivision $\Lambda$ and for some positive constant $\textbf{c}>0$ independent of $\Lambda.$
\end{enumerate}
\end{theorem}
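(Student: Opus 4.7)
The plan is to use Theorem~\ref{continuity of the solution} as the main engine and reduce each part of the claim to controlling the single quantity
\[
\Psi(\Lambda,\Gamma) := \omega(2|\Lambda|) + \frac{\omega(2|\Gamma|)}{|\Gamma|^{\gamma/2}} + \frac{\omega(2|\Lambda|)}{|\Lambda|^{\gamma/2}} + \int_0^{2|\Lambda|}\frac{\omega(s)}{s^{1+\gamma/2}}\,\mathrm{d}s
\]
uniformly over the relevant pairs of subdivisions $(\Lambda,\Gamma)$ with $\Gamma$ a refinement of $\Lambda$. In every case, the limit of $(u_\Lambda)$ in $C([0,T],V)$ is identified with $u$ via the continuous embedding $MR(V,V')\hookrightarrow C([0,T],H)$ together with the strong convergence $u_\Lambda\to u$ in $MR(V,V')$ noted in the paragraph preceding Theorem~\ref{main theorem:l2 max reg in H}.

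I would begin with part~(3), which is the most direct. Under assumption~(\ref{Condition supplementary 2}), each summand of $\Psi(\Lambda,\Gamma)$ tends to $0$ as $|\Lambda|\to 0$ uniformly in $\Gamma$ finer than $\Lambda$: the continuity of $\omega$ at $0$ takes care of the first term, (\ref{Condition supplementary 2}) applied with $|\Gamma|\le|\Lambda|$ handles the second and third (since $\omega$ is non-decreasing, $\omega(2|\Gamma|)/|\Gamma|^{\gamma/2}\le 2^{\gamma/2}\sup_{s\le 2|\Lambda|}\omega(s)/s^{\gamma/2}$), and the $L^1$-integrability~(\ref{eq 2:Dini-condition}) kills the fourth. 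Hence $(u_\Lambda)_\Lambda$ is Cauchy in $C([0,T],V)$ uniformly on $(u_0,f)$; its limit must be $u$, whence $u\in C([0,T],V)$. Passing to the limit in $\Gamma$ (mesh $\to 0$) in the estimate of Theorem~\ref{continuity of the solution} yields the stated explicit bound on $\|u-u_\Lambda\|_{C(0,T,V)}$.

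For parts~(1) and~(2) the strengthened hypothesis is unavailable, so I would fall back on the elementary observation already exploited in Corollary~\ref{Corollary 1: uniform convergence}: condition~(\ref{eq 2:Dini-condition}) forces $\liminf_{t\to 0^+}\omega(t)/t^{\gamma/2}=0$. Pick a null sequence $(t_n)\subset(0,T]$ realizing this liminf and set $|\Lambda_n|=t_n/2$. For part~(1) one restricts to refinements $\Gamma_n$ whose mesh is also drawn from the same scale (e.g. $|\Gamma_n|=t_m/2$ for some $m\ge n$), so that every term in $\Psi(\Lambda_n,\Gamma_n)$ tends to $0$; Theorem~\ref{continuity of the solution} then gives~(1). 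For part~(2), arrange the $\Lambda_n$ to be nested, so that $\Lambda_m$ refines $\Lambda_n$ whenever $m\ge n$, apply~(1) to deduce that $(u_{\Lambda_n})$ is Cauchy in $C([0,T],V)$, and identify the limit with $u$ as in part~(3); this also gives $u\in C([0,T],V)$.

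The only delicate point is the second summand $\omega(2|\Gamma|)/|\Gamma|^{\gamma/2}$ of $\Psi$: since $t\mapsto\omega(t)/t^{\gamma/2}$ is merely bounded (by (\ref{eq 1:Dini-condition})) and not in general continuous at $0$, this quantity need not be small for arbitrary refinements $\Gamma$ with $|\Gamma|<|\Lambda|$. In part~(3) the extra hypothesis~(\ref{Condition supplementary 2}) removes this difficulty outright; in parts~(1)--(2) restricting refinements to meshes drawn from the distinguished null sequence $(t_n)$ does the same job. Once this step is justified, the rest reduces to routine triangle-inequality manipulations around Theorem~\ref{continuity of the solution}.
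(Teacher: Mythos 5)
Your proposal is correct and follows essentially the same route as the paper: both rest entirely on the Cauchy-type estimate of Theorem~\ref{continuity of the solution}, use $\liminf_{t\to 0}\omega(t)/t^{\gamma/2}=0$ (forced by~(\ref{eq 2:Dini-condition}), exactly as in Corollary~\ref{Corollary 1: uniform convergence}) to choose the null sequence $(t_n)$ for parts~(1)--(2), and identify the $C([0,T],V)$-limit with $u$ through the convergence $u_\Lambda\to u$ in $C([0,T],H)$. The ``delicate point'' you flag is real and is glossed over in the paper's one-line proof: without~(\ref{Condition supplementary 2}) the term $\omega(2|\Gamma_n|)/|\Gamma_n|^{\gamma/2}$ is not small for \emph{arbitrary} $\Gamma_n$ with $|\Gamma_n|<t_n/2$, so part~(1) as literally stated does not follow from Theorem~\ref{continuity of the solution}; your restriction of the refinement meshes to the same distinguished null sequence is the correct repair, and it is all that is needed to run the Cauchy argument for part~(2).
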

\begin{proof} Assertion $1)$ follows by similar argument as in the proof of Corollary \ref{Corollary 1: uniform convergence}.
By Theorem \ref{continuity of the solution},  $u_\Lambda$ is a Cauchy sequence, and thus converges  in $C([0,T],V).$ In other hand, we known that $u_\Lambda \longrightarrow u$ strongly in $C([0,T],H).$ Therefore, assertion $2)$ is a direct consequence of $1)$ and the assertion $3)$ follows directly from the estimate (\ref{estimation of the error of the convergence2}) and the additional condition (\ref{Condition supplementary 2}).
\end{proof}

\end{document}